\newtheorem{thm}{Theorem}[section]
\newtheorem{lemma}[thm]{Lemma}
\newtheorem{lem}[thm]{Lemma}
\newtheorem{pro}[thm]{Proposition}
\newtheorem{cor}[thm]{Corollary}
\newtheorem*{thm*}{Theorem}
\theoremstyle{remark}
\newtheorem{remark}[thm]{Remark}
\theoremstyle{definition}
\newtheorem{defi}[thm]{Definition}
\numberwithin{equation}{section}
\newcommand{\R}{\mathbb{R}}
\newcommand{\ep}{\epsilon}
\DeclareMathOperator{\re}{Re}
\newcommand{\eps}{\varepsilon}
\newcommand{\be}{\begin{equation}}
\newcommand{\ee}{\end{equation}}
\newcommand{\bee}{\begin{equation*}}
\newcommand{\eee}{\end{equation*}}
\newcommand{\bea}{\begin{eqnarray}}
\newcommand{\eea}{\end{eqnarray}}
\newcommand{\HH}{\dim_{\mathcal{H}}}
\renewcommand{\a }{\alpha }
\renewcommand{\b }{\beta }
\newcommand{\e }{\varepsilon }
\newcommand{\intbar}{\mathop{\int\makebox(-13.5,0){\rule[4pt]{.7em}{0.3pt}}%
\kern-6pt}\nolimits}
\newcommand{\norm}[1]{\left\|#1\right\|}
\DeclareMathOperator{\trace}{tr}
\DeclareMathOperator{\dist}{dist}
\DeclareMathOperator{\rango}{Rg}
\newcommand{\mT}{\mathcal T}
\newcommand{\bs}{\begin{split}}
\newcommand{\esplit}{\end{split}}
\newcommand{\mbR}{\mathbb R}
\newcommand{\mC}{\mathcal C}
\newcommand{\back}{\backslash}
\newcommand{\lp}{\left(}
\newcommand{\rp}{\right)}
\newcommand{\PP}{\mathfrak p}
\title[Gluing for a fully-non-linear equation]{A gluing construction of singular solutions for a fully non-linear equation in conformal geometry}
\author[M.F. Espinal]{Maria Fernanda Espinal}
\address{M.F. Espinal,
	\hfill\break\indent
CMM Center for Mathematical Modeling
\hfill\break\indent
Av. Beauchef 851, Santiago,  Chile}
\email{mfespinal@cmm.uchile.cl}
\author[M.d.M. Gonz\'alez]{Mar\'ia del Mar Gonz\'alez}
\address{M.d.M. Gonz\'alez,
\hfill\break\indent
Universidad Aut\'onoma de Madrid
\hfill\break\indent
Departamento de Matem\'aticas, and ICMAT.  28049 Madrid, Spain}
\email{mariamar.gonzalezn@uam.es}
\begin{document}

\begin{abstract}
In this paper we study the $\sigma_2$--Yamabe equation, $n>4$, for solutions with a prescribed singular set $\Lambda$ given by a disjoint union of closed submanifolds whose dimension is positive and strictly less than $(n-\sqrt{n}-2)/2$. The $\sigma_2$--curvature in conformal geometry is defined as the second elementary symmetric polynomial of the eigenvalues of the Schouten tensor, which yields a fully non-linear PDE for the conformal factor.  We show that the classical gluing method, used by Mazzeo-Pacard (JDG 1996) for the scalar curvature problem, can be used  in the fully non-linear setting. This is a consequence of the conformal properties of the $\sigma_2$ equation, which imply that the linearized operator has good mapping properties in weighted spaces.
\end{abstract}

\maketitle

\begin{center}
\noindent{\em Key Words: $\sigma_k$--curvature, fully nonlinear equations, conformal geometry, singular solutions, gluing methods, complete metrics}
\end{center}

\bigskip

\centerline{\bf AMS subject classification:  35J75, 53C21,
58J60,35J96}


\section{Introduction}

Let $(M, g)$ be a compact smooth $n$--dimensional Riemannian manifold without boundary and let $1 \leq k <\frac{n}{2}$. Taking advantage of this second assumption, we introduce the following formalism for a conformal change of metric
\begin{equation}\label{conformal-metric-introduction}
{g}_u   :=  u^{\frac{4k}{n-2k}}\, {g} ,
\end{equation}
where the conformal factor $u>0$ is a smooth  positive function. In this context $ g$ will be referred as the background metric.

Let $Ric_g$, $R_g$ be the Ricci tensor and scalar curvature of $g$, respectively. The Schouten tensor with respect to the metric $g$ is given by
\begin{equation*}
A_g=\tfrac{1}{n-2} \left(Ric_g-\tfrac{1}{2(n-1)}R_g g\right).
\end{equation*}
For the conformal metric \eqref{conformal-metric-introduction}, the Schouten tensor of $g_u$ is related to the one of $g$ by the conformal transformation law
\begin{equation}\label{transformation-Schouten}\begin{split}
A_{g_{u}}  =  A_{g} -
\tfrac{2k}{n-2k}  u^{-1}{D^2 u}  +
\tfrac{2kn}{(n-2k)^2} u^{-2}  {\nabla u \otimes \nabla u}  -
\tfrac{2k^2}{(n-2k)^2}  u^{-2}  {|\nabla u|_g^2}  g  ,
\end{split}\end{equation}
where $D^2$ and $|\cdot|_g$ are computed with respect to the background metric $g$.

We define the $\sigma_k$--\emph{curvature} as the $k$-th elementary symmetric function of the eigenvalues of the $(1,1)$-tensor $g^{-1} A_{g}$
\begin{equation*}
\sigma_k(g^{-1} A_{g})=\sum_{i_1<\ldots<i_k}\lambda_{i_1}\ldots\lambda_{i_k},
\end{equation*}
 and the \emph{positive} $\Gamma_k^+ $ \emph{cone} as set of metrics
\begin{equation*}
\Gamma_k^+=\{g \,:\, \sigma_1(g^{-1}A_{g}),\ldots,\sigma_k(g^{-1}A_g)>0\}.
\end{equation*}
Fixed a background metric, the $\sigma_k$--\emph{Yamabe problem} consists in finding a conformal metric in the positive  $\Gamma_k^+$ cone of constant $\sigma_k$--curvature. This is a fully non-linear equation for the conformal factor $u$,
\begin{equation}\label{initial-equation}
\sigma_k \big(\, g_u^{-1} A_{g_u} \big)  =
2^{-k} \, \hbox{${n \choose k}$},
\end{equation}
and has been well studied in the compact setting (we cite, for instance, \cite{cgy1,ll,stw,gw,gv} although  by no means this list is complete).\\

\textbf{Statement of the results.} In this paper we examine the $\sigma_2$--Yamabe equation, which we rephrase as a PDE problem. 
For technical reasons, once a background metric $g$ has been fixed, we denote
\begin{equation}
\label{tensor-B}
B_{g_u}   :=  \tfrac{n-4}{4} \,  u^{\frac{2n}{n-4}} \, g_u^{-1}  A_{g_u}, \quad B_g:=\tfrac{n-4}{4} g^{-1}  A_{g}
\end{equation}
so that, for a conformal change given by  \eqref{conformal-metric-introduction},
\begin{equation}\label{formula-B}
B_{g_u}=u^2B_{g}+g^{-1}\left[ -uD^2u+\tfrac{n}{n-2k}\nabla u\otimes \nabla u-\tfrac{k}{n-2k}|\nabla u|_g^2 \,g\right],
\end{equation}
where all derivatives and norms are taken with respect to the background metric $g$. We can also write the positive $\Gamma_2^+$ cone as
\begin{equation*}
\Gamma_k^2=\{u>0 \,:\, \sigma_1(B_{g_u}),\sigma_2(B_{g_u})>0\}.
\end{equation*}
In this notation, the $\sigma_2$--equation \eqref{initial-equation} may be expressed  as a PDE problem:
\begin{equation}
\label{eq}
  \mathcal{N} (u, g)   :=  \sigma_2  \left( B_{g_{u}}
  \right)  - c |u|^{q-1}u  = 0 ,
\end{equation}
where we have set
\begin{equation*}
q:=\frac{4n}{n-4},\quad c:=\binom{n}{4}\lp\frac{n-4}{8}\rp^2. 
\end{equation*}
The advantage of this formulation  is that  \eqref{eq}  is well defined even if $u$ is non-positive.

\begin{thm}\label{main-theorem}
Let $(M,g_M)$ be a $n$-dimensional, compact, smooth, Riemannian manifold of positive $\sigma_2$--curvature (and in the positive $\Gamma_2^+$ cone), that is non-degenerate. Let $\Lambda$ be a subset of $M$ which is a closed, connected, submanifold of dimension $p$. Assume that
\begin{equation}\label{restriction-p}
0<p<\PP_2,
\end{equation}
where $\PP_2$ is given in  \eqref{exact-formulas} below. Then there exists an infinite dimensional family of solutions of \eqref{eq} in  $M\setminus\Lambda$, belonging to the positive $\Gamma_2^+$ cone, that are singular exactly on $\Lambda$. In particular, such solutions satisfy
\begin{equation}\label{asymptotic-behavior}
u(x)\sim \frac{1}{\dist(x,\Lambda)^{\frac{n-4}{4}}} \quad\text{as}\quad x\to\Lambda. 
\end{equation}
\end{thm}

\begin{remark}
Note that, for $k=2$, the restriction of being in the $\Gamma_2^+$ cone is easily satisfied since
$$\sigma_2\leq \frac{n-1}{2n}\sigma_1^2,$$
 no matter what the sign of $\sigma_1$ is (see \cite{Viaclovsky:contact} for a proof of this classical inequality).
\end{remark}

 In the proof of the theorem we follow the classical gluing method of Mazzeo-Pacard \cite{Mazzeo-Pacard} for the scalar curvature ($k=1$), which is a semilinear PDE. Our main contribution here is to show that their scheme can be adapted to the fully non-linear equation \eqref{eq} thanks to the conformal properties of the problem. The main idea is to construct an approximate solution $\bar u_\ep$ by gluing the standard singularity to the background manifold, and then apply a perturbation argument as $\ep\to 0$.

We remark that, even though this method could work for any $\sigma_k$ with $2\leq k<\frac{n}{2}$, we have some computational difficulties that restrict our main theorem to $k=2$; however,  we conjecture that it is still true for other values of $k$.  

If the solution satisfies $u>0$, then it defines a conformal metric $g_u=u^{\frac{8}{n-4}}g_M$, which is complete in $M\setminus\Lambda$, and has positive constant $\sigma_2$--curvature. Unfortunately, we only know that $u$ is positive in a neigbourhood of $\Lambda$, as our method does not yield positivity in general. This fact would require finer asymptotics in the neck region (compare, for instance, to the paper by Catino-Mazzieri \cite{cat-maz}, where they use a Schwarzschild metric in the neck). 

In any case, the objective of this paper is to understand the linearized problem for \eqref{eq}, which is an edge operator in the sense of Mazzeo \cite{Mazzeo:edge}. In particular, our main contribution is to show that it has good mapping properties in weighted spaces as a consequence of  the conformal character of the $\sigma_2$ equation.

Even if our arguments are mostly based on  the  Euclidean metric, similarly to the original paper of Mazzeo-Pacard \cite{Mazzeo-Pacard}, once a proof is available for the flat setting, then one may allow general manifolds $M$. However, in  Theorem \ref{main-theorem} we need to add the hypothesis of non-degeneracy of the background manifold $M$. This means that  the linearization \eqref{linope} of the $\sigma_2$ equation on $M$ is invertible. Observe that in the semilinear case non-degeneracy is also needed but it follows as soon as $M$ has non-negative scalar curvature (see Theorem 3 in \cite{Mazzeo-Pacard}). Moreover, this is a natural hypothesis in gluing problems  (see, for instance, Silva-Santos \cite{Silva-Santos} or Catino-Mazzieri \cite{cat-maz} on related gluing constructions for the  $\sigma_k$ equation).

Note also that, working with a compact background manifold $M$ we avoid  complications at infinity. This is equivalent to considering  a bounded  domain $\Omega\setminus\Lambda$ with Dirichlet boundary conditions on $\partial\Omega$ as in the original setting of Mazzeo-Pacard \cite{Mazzeo-Pacard}. In addition,  
 since our proof is local, the same theorem holds if $\Lambda$ is a finite union of disjoint submanifolds with the specified restrictions. 

Nevertheless, there are some difficulties coming from the non-linear structure of the equation that do not arise in the semi-linear case. Indeed, both in the gluing scheme and in the calculation of the linearized operator, one needs to take into account the full structure of the Schouten tensor, not just the Laplacian of the conformal factor.

Finally, let us make some comments on related bibliography. In the paper  Mazzieri-Segatti \cite{Mazzieri-Segatti}  they construct constant $\sigma_k$ metrics  with isolated singularities by gluing Delaunay-type metrics. In the publication Silva-Santos \cite{Silva-Santos}, the authors use asymptotic matching to find solutions to the  $\sigma_2$--Yamabe problem with isolated singularities. Other relevant papers are Guan-Lin-Wang \cite{Guan-Lin-Wang} and Catino-Mazzieri \cite{cat-maz}, on connected sum constructions for the $\sigma_k$--curvature. More recent papers are Duncan-Wang \cite{Duncan-Wang}, which contains complementary existence results, and \cite{Duncan-Nguyen:Loewner-Nirenberg1,Duncan-Nguyen:Loewner-Nirenberg2,Duncan-Nguyen:Loewner-Nirenberg3} on the $\sigma_k$ version of the Loewner-Nirenberg problem. 

We also remark that this gluing method is very versatile and can be applied in many other settings, for instance, in non-local problems \cite{ACDFGW,Chan:thesis}.\\

\textbf{Restrictions on $p$.}\label{subsection:restrictions-p}
In order to motivate hypothesis \eqref{restriction-p} on the dimension of $\Lambda$ in the main theorem, let us present first the model example of singular metrics of $\sigma_k$--curvature.

More precisely, take the canonical complete metric in $\mathbb S^n\setminus \mathbb S^p$, which has  constant $\sigma_k$--curvature and is singular  on $\Lambda=\mathbb S^p$. It is  conformal to the product $\mathbb S^{n-p-1}\times\mathbb{H}^{p+1}$ with its standard metric (a picture can be found in  \cite[Figure 1]{Bettiol-Gonzalez-Maalaoui}). Its Schouten tensor is diagonal and, modulo a multiplicative factor of $1/2$,
its eigenvalues are $1$ and $-1$, with multiplicities $n-p-1$ and $p+1$, respectively. Then we can compute
\be\label{model-sigma-k}2^k\sigma_k(\mathbb{H}^{p+1}\times \mathbb S^{n-p-1})
=\sum_{i=0}^{k} \binom {n-p-1} {i}\binom{p+1}{k-i}(-1)^{k-i}=:c_{n,p,k}.\ee
For point singularities ($p=0$) there is a rich geometry of Delaunay-type singular metrics (Chang-Han-Yang \cite{Chang-Han-Yang},  Li \cite{Li}, Han-Li-Teixeira \cite{Han-Li-Teixeira}, Li-Han \cite{Li-Han}).

 In this paper we will restrict to higher dimensional singularities, that is, $p>0$. We set
\begin{equation*}\label{set-dimension} \PP_k:=\sup\left\{p\geq 0:\sigma_1(\mathbb{H}^{p+1}\times \mathbb S^{n-p-1}),\ldots,\sigma_{k}(\mathbb{H}^{p+1}\times \mathbb S^{n-p-1})>0\right\}.\end{equation*}
Exact formulas can be given for $k=2,3$. Indeed,
\begin{equation}\label{exact-formulas}\PP_2=\frac{n-\sqrt n-2}{2},\quad \PP_3=\frac{n-2-\sqrt{3n-2}}{2}.\end{equation}
Moreover, for 
fixed $k>1$, we have the asymptotic bound \cite{non-removable}
\begin{equation}\label{asymptotic-bound}
\frac{n}{2}-C_1(k)\sqrt n \leq \PP_k<\frac{n}{2}-\frac{2+\sqrt n}{2}\quad\text{as}\quad  n\to\infty \end{equation}
for some constant $C_1(k)$.

For general singular sets we also expect restrictions on $p$. The semilinear case, that is, for  $k=1$, is already well known in the literature. In addition to  the aforementioned reference Mazzeo-Pacard \cite{Mazzeo-Pacard}, we underline the classical result of Schoen-Yau \cite{Schoen-Yau:paper}, where they show that for complete, conformal metrics on $\mathbb S^n\setminus \Lambda$ of positive scalar curvature,  the Hausdorff dimension of the singular set $\Lambda$ must satisfy $\HH(\Lambda)\leq \frac{n-2}{2}$. We cite also Mazzeo-Smale \cite{Mazzeo-Smale}, where they consider singular sets on the sphere that are sufficiently close to an equatorial subsphere, and Fakhi \cite{Fakhi} when the singular set $\Lambda$ is a submanifold with boundary.

In the fully non-linear setting $k>1$, a necessary condition for the existence of complete metrics on $\mathbb S^n\setminus\Lambda$ conformal to the standard one was given by one of the authors in  \cite{non-removable} (see also \cite{Chang-Hang-Yang} for $k=2$). More precisely, if
$$\sigma_{1}(B_{g_u})\geq C_0>0\quad\mbox{and}\quad\sigma_2(B_{g_u}),\ldots,\sigma_k(B_{g_u})\geq 0$$
for some integer $1\leq k < n/2$, then
$$\HH(\Lambda)\leq\frac{n-2k}{2}.$$
This number is far from \eqref{asymptotic-bound} and we believe it can be improved. Indeed, we conjecture that a necessary condition for the existence of solutions to the $\sigma_k$--Yamabe problem in the positive  $\Gamma_k^+$ cone with singular set a $p$--dimensional, compact, closed submanifold  in $\mathbb S^n$ is
$p<\PP_k.$\\

\textbf{Scheme of the proof.} The main idea to find a solution of \eqref{eq} is to study its linearization, which is defined as follows. 
First, we fix $u>0$ and take a conformal perturbation of the metric $g_u$, i.e, we set for $s\in\mathbb R$,
$$s\mapsto g_s:=(u+s\varphi)^{\frac{8}{n-4}}g.$$
Using the normalization \eqref{tensor-B}, let $\mathbb B_s$ be the symmetric $(1,1)$-tensor given by
\begin{equation*}
\mathbb B_s:=\tfrac{n-4}{4} (u+s\varphi)^{\frac{2n}{n-4}} g_s^{-1} A_{g_s}.
\end{equation*}
The linearized operator of $\mathcal{N} (\,\cdot \,, g)$ about $u$ is defined as
\begin{equation}
\label{linope}
\begin{split}
\mathbb{L}(u,g)\,[\varphi]  :&=  \left.
\frac{d}{ds} \right|_{s=0} \mathcal{N}\, (u  + s\varphi,g)\\
&=\left.\frac{d}{ds}\right|_{s=0}\sigma_2(\mathbb B_s)-cq u^{q-1}\varphi.
\end{split}
\end{equation}
We can use a well known formula (see \cite{Viaclovsky:contact}) for the linearization of $\sigma_k$
\begin{equation}\label{calculation-linearized2}
\left.\frac{d}{ds}\right|_{s=0} \sigma_k(\mathbb B_s)=\trace\left( T^{k-1}(B_{g_u}) \left.\frac{d\mathbb B_s}{ds}\right|_{s=0}\right),
\end{equation}
where  $m$-th Newton tensor for a (1,1)-tensor $\mathbb B$ is given by
$$T^m(\mathbb B):=\sigma_m I-\sigma_{m-1} \mathbb B+\ldots+(-1)^{m} \mathbb B^m$$
for any integer $0\leq m\leq k$.
Note that, for a metric in the positive  $\Gamma_k^+$ cone, its Newton tensor $T^m$ is positive definite for $m=0,\ldots,k-1$. In particular, this implies that the linearization is elliptic. 

In addition, the non-degeneracy hypothesis means that the linearization around the background metric $g$, given by $\mathbb{L}(1,g)$, is invertible in $\mathcal C^\alpha(M)$.\\

As we have mentioned, the main idea   in the proof of Theorem \ref{main-theorem} is  to use a version of the gluing method from Mazzeo-Pacard \cite{Mazzeo-Pacard} in order to find solutions to \eqref{eq}. The starting point  is to construct an approximate metric, written  in the form
$$\bar g_\ep:=\bar u_\e^{\frac{8}{n-4}} g_M,\quad \bar u_\ep>0,$$ 
  with the right asymptotic behavior near the singularity (recall \eqref{asymptotic-behavior}), and then find a perturbation $\varphi$ such that
\begin{equation}\label{eq1}
\mathcal N(\bar u_\epsilon+\varphi,g_M)=0,
\end{equation}
by a linearization argument.
For simplicity, once $\bar u_\epsilon$ and $g_M$ have been fixed we will simply write
$$\mathbb L_\epsilon[\varphi]:=\mathbb L(\bar u_\epsilon,g_M)[\varphi].$$
Then, equation \eqref{eq1} is equivalent to
\begin{equation}\label{final-equation}
\mathbb{L}_\ep[\varphi] +f_\ep + Q_\ep[\varphi]=0,
\end{equation}
where we have defined
\begin{align}
&f_\ep:=\mathcal N(\bar u_\ep,g_M)\label{f-epsilon},\\
&Q_\ep[\varphi]:= \mathcal N(\bar u_\ep+\varphi,g_M)-\mathcal N(\bar u_\ep,g_M)-\mathbb L_\ep[\varphi].
\end{align}
Most of the analysis in this paper is concerned with the study of the mapping properties of the linearized operator in weighted spaces. Note that $\mathbb L_\ep$ is an edge operator in the sense of Mazzeo \cite{Mazzeo:edge}. The crucial point is that, near the singular set, $\mathbb L_\ep$ is modeled by an operator with constant indicial roots, that do not depend on $\ep$.
 Now,  if $\mathbb L_\ep$ has a right inverse, we can write \eqref{final-equation} as
\begin{equation*}
\varphi=-\mathbb L_\ep^{-1}(Q_\ep[\varphi]+f_\ep).
\end{equation*}
A fixed point argument will show the existence of a solution $\varphi$.

We remark that our arguments rely on the  good conformal properties of the equation.  Indeed, if two metrics $g_1$ and ${g}$ are related by 
$$u_1^{\frac{8}{n-4}} g_1=u^{\frac{8}{n-4}}{g},$$ then the non-linear operator from \eqref{eq} enjoys the following {\em conformal equivariance property}
\begin{equation}\label{confequi}
\mathcal{N} \, (u_1,g_1) =  (u/u_1)^{-\frac{4n}{n-4}} \, \mathcal{N} \, (u,g).%
\end{equation}
As a direct consequence, we have another conformal equivariance property for the linearized operator
\begin{equation}\label{confequilin}
\mathbb{L}(u_1,g_1)[\varphi]\,\,=\,\,(u/u_1)^{{-\frac{4n}{n-4}}}\,
\mathbb{L}(u,g)\left[(u/u_1)\, \varphi\right].
\end{equation}

\vskip 0.5cm

\textbf{Notations.}
Our reasoning will involve a delicate choice of real numbers $\mu$, $\nu$ and $\delta$. For future reference, in Figure 1 below we represent a real line with our choice of parameters. The marking points $\chi _{j,\pm}^{( 0)}$, $j=0,1$, are  indicial roots of the model linearization and will be specified in Subsection \ref{subsction:model-linearization}.

\begin{figure}[h]
\centering
\tikzset{every picture/.style={line width=0.75pt}} 

\begin{tikzpicture}[x=0.5pt,y=0.5pt,yscale=-1,xscale=1]

\draw [line width=2.25]    (19.33,111.67) -- (624.33,109.67) ;
\draw    (58.33,105.67) -- (58.33,120.67) ;
\draw    (313.33,103.67) -- (313.33,119.67) ;
\draw    (590.33,105.67) -- (590.33,120.67) ;
\draw    (430.33,104.67) -- (430.33,119.67) ;
\draw    (197.33,104.67) -- (197.33,119.67) ;
\draw    (129.33,103.67) -- (129.33,118.67) ;
\draw    (160.33,75.67) -- (160.02,105) ;
\draw [shift={(160,107)}, rotate = 270.61] [color={rgb, 255:red, 0; green, 0; blue, 0 }  ][line width=0.75]    (10.93,-3.29) .. controls (6.95,-1.4) and (3.31,-0.3) .. (0,0) .. controls (3.31,0.3) and (6.95,1.4) .. (10.93,3.29)   ;
\draw    (474.33,77.67) -- (474.95,105) ;
\draw [shift={(475,107)}, rotate = 268.7] [color={rgb, 255:red, 0; green, 0; blue, 0 }  ][line width=0.75]    (10.93,-3.29) .. controls (6.95,-1.4) and (3.31,-0.3) .. (0,0) .. controls (3.31,0.3) and (6.95,1.4) .. (10.93,3.29)   ;

\draw (286,123.4) node [anchor=north west][inner sep=0.75pt]  [font=\small]  {$\frac{p}{2} -\frac{n-4}{4}$};
\draw (44,132.4) node [anchor=north west][inner sep=0.75pt]    {$\chi _{1,-}^{( 0)}$};
\draw (420,130.4) node [anchor=north west][inner sep=0.75pt]    {$\chi _{0,+}^{( 0)}$};
\draw (576,132.4) node [anchor=north west][inner sep=0.75pt]    {$\chi _{1,+}^{( 0)}$};
\draw (184,131.4) node [anchor=north west][inner sep=0.75pt]    {$\chi _{0,-}^{( 0)}$};
\draw (465,43.4) node [anchor=north west][inner sep=0.75pt]  [font=\small]  {$\mu =\delta +\frac{p}{2} -\frac{n-4}{4}$};
\draw (155,39.4) node [anchor=north west][inner sep=0.75pt]  [font=\small]  {$\nu \approx -\delta +\frac{p}{2} -\frac{n-4}{4}$};
\draw (98,125.4) node [anchor=north west][inner sep=0.75pt]  [font=\small]  {$-\frac{n-4}{4}$};

\end{tikzpicture}
\label{figure4}
\caption{Our choice of parameters $\mu$ and $\nu$.}
\end{figure}
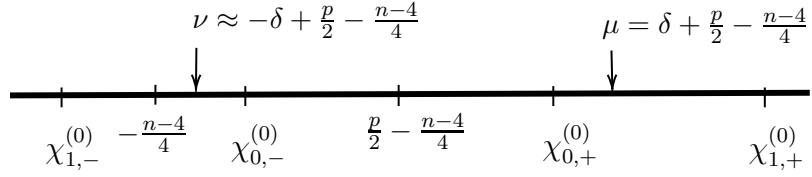

We conclude the Introduction with some notation remarks.
Denote the codimension of $\Lambda$ by
$$N=n-p.$$
 For one-variable functions $\varphi_1$,$\varphi_2$, we will write
$$\varphi_1\sim \varphi_2\quad\text{iff} \quad 0<c_1\leq \lim \frac{\varphi_1}{\varphi_2}\leq c_2$$
 and
 $$\varphi_1 \asymp \varphi_2 \quad\text{iff} \quad \lim \frac{\varphi_1}{\varphi_2}=1.$$
The convention $\varphi=O(r^\beta)$ means, not only an estimate for the function $\varphi$, but also for its derivatives (with the corresponding orders).
Finally, $C$ will denote a positive constant that may change from line to line, but always independently of $\ep$.\\

\textbf{Acknowledgements.} The authors would like to thank Rafe Mazzeo, Lorenzo Mazzieri, Frank Pacard and Mariel S\'aez for their useful advice, and to the anonymous referees for their valuable comments.\\

\section{The model $\mathbb R^n\setminus \mathbb R^p$ in cylindrical coordinates}

In this Section we consider the model  $\R^n \setminus \R^p$, which is equivalent to $\mathbb S^n\setminus\mathbb S^p$ by stereographic projection. It is the building block in our construction. First note that $\R^n \setminus \R^p$ is diffeomorphic to $\R_t \times \mathbb S_\theta^{N-1}\times \R_z^p$, so
we may write the Euclidean metric in polar-Fermi coordinates
\begin{equation}\label{metric-gE}
g_E :=dr^2+r^2g_\theta\,+ \,\delta_{\a \b} \,\, dz^\a \otimes \, dz^\b,
\end{equation}
where $\a,\b = 1, \ldots , p$  will refer to coordinates $z\in\mathbb R^p$,  and $g_\theta$ is the canonical metric on the sphere $\mathbb S^{N-1}$. Subindexes $i,j$ will refer to coordinates $\theta\in\mathbb S^{N-1}$.

We describe now the spherical harmonic decomposition of $\mathbb S^{N-1}$.
Let $\{\lambda_j\}_{j=0}^\infty$ be the eigenvalues for $-\Delta_{\theta}$ repeated according to multiplicity, with eigenfunctions $\{e_j(\theta)\}$, that is,
 \begin{equation}\label{spherical-harmonics}
 -\Delta_\theta e_j=\lambda_j  e_j, \quad j=0,1,\ldots
 \end{equation}
 In particular,
  \begin{equation*}
 \lambda_0=0,\quad\lambda_j=N-1 \quad\text{for }j=1,\ldots,N,\quad \text{and so on.}\\
 \end{equation*}

\subsection{Cylindrical coordinates}\label{subsection:cyl-coordinates}
Let us allow any $1\leq k<\frac{n}{2}$ to underline the general structure.
Our first observation is that it is more convenient to use a (conformal) cylinder-type metric as background.  For this, we set
\begin{equation*}
g_{cyl}=dt^2+g_\theta+e^{2t}\delta_{\a \b} \,\, dz^\a \otimes \, dz^\b.
\end{equation*}
Then, denoting the radial coordinate by $r=e^{-t}$, we see that $g_E$ and $g_{cyl}$ are conformally related by
\begin{equation*}
\begin{split}
g_E=   e^{-2t}  \big( \,  dt^2 \, + \, g_{\theta} \big)\,+ \,\delta_{\a \b} \,\, dz^\a \otimes \, dz^\b=e^{-2t}g_{cyl}.
\end{split}
\end{equation*}
We also change the notation for the conformal factor from $u$ to $v$ where
\begin{equation}\label{notation-uv}
u(r,\theta,z)=r^{-\frac{n-2k}{2k}}v(-\log r,\theta,z).
\end{equation}
Thus we can record any conformal change by
\begin{equation}\label{change:uv}
g_u = u^{\frac{4k}{n-2k}} g_{E}=v^{\frac{4k}{n-2k}} g_{cyl}.
\end{equation}
 With some abuse of notation, we will denote this conformal metric as
\begin{equation*}\label{conformal-metric}
g_v:=v^{\frac{4k}{n-2k}} g_{cyl}
\end{equation*}
for a conformal factor $v>0$.

Cylindrical coordinates provide a convenient framework for calculations.
The Schouten tensor for $g_{cyl}$ is diagonal and
\begin{align*}
&\big(g_{cyl}^{-1} A_{g_{cyl}}\big)_t^t= -\tfrac{1}{2},\\
&\big(g_{cyl}^{-1} A_{g_{cyl}}\big)_j^i= +\tfrac{1}{2}\,\delta_j^i,\\
&\big(g_{cyl}^{-1} A_{g_{cyl}}\big)_\beta^\alpha= -\tfrac{1}{2}\,\delta_\beta^\alpha.
\end{align*}
Define
\begin{equation*}
  B_{g_v}=\tfrac{n-2k}{2k}\, v^{\frac{2n}{n-2k}}\,g_{v}^{-1}A_{g_{cyl}},\quad  B_{g_{cyl}}=\tfrac{n-2k}{2k}\, g_{cyl}^{-1}A_{g_{cyl}}.
\end{equation*}
Then, since
\begin{equation*}
A_{g_v}=A_{g_{cyl}}-\tfrac{2k}{n-2k} \,v^{-1} D^2 v+\tfrac{2kn}{(n-2k)^2}\,v^{-2} \nabla v\otimes\nabla v-\tfrac{2k^2}{(n-2k)^2}\,v^{-2}\,|\nabla v|^2_{g_{cyl}}g_{cyl},
\end{equation*}
where all the derivatives and norms are calculated with respect to the background metric $g_{cyl}$, we have
\begin{equation}\label{B-tensor-v}
B_{g_v}=g_{cyl}^{-1}\left[v^2B_{g_{cyl}}-v D^2 v+\tfrac{n}{n-2k}\, \nabla v\otimes\nabla v-\tfrac{k}{n-2k}\,|\nabla v|^2_{g_{cyl}}g_{cyl}\right],
\end{equation}
and thus
\begin{equation}\label{B-coordinates}\begin{split}
\big( B_{g_v}\big)_t^t & =    -\tfrac{n-2k}{4k} \,v^2  - v\partial_{tt}v +\tfrac{n}{n-2k}\,(\partial_t v)^2-\tfrac{k}{n-2k}\,|\nabla v|^2_{g_{cyl}}, \\
\big( B_{g_v}\big)_j^i & =  \tfrac{n-2k}{4k} \,v^2 \,\delta_{j}^i-\tfrac{k}{n-2k}\, |\nabla v|^2_{g_{cyl}} \,\delta_{j}^i,\\
&\quad+\left[-v\,(D^2_\theta v)_{lj}+\tfrac{n}{n-2k}\,\partial_l v\,\partial_j v\right](g_\theta)^{il},\\
\big( B_{g_v}\big)_\beta^\alpha & =  \left[-\tfrac{n-2k}{4k}\,v^2-v\,\partial_t v-\tfrac{k}{n-2k}\,|\nabla v|^2_{g_{cyl}}\right]\delta_\beta^\alpha\\&\quad+\left[-v\,\partial_{\alpha\beta} v+\tfrac{n}{n-2k}\,\partial_\alpha v\,\partial_\beta v\right]e^{-2t}.
\end{split}\end{equation}
Notice that, for a function $v=v(t)$, the matrix $B_{g_v}$ is diagonal and
\begin{equation}\label{B-radial}\begin{split}
\big( B_{g_v}\big)_t^t & =    -\tfrac{n-2k}{4k} \,v^2  - v\ddot v +\tfrac{n-k}{n-2k}\,\dot v^2=:\kappa_1,\\
\big( B_{g_v}\big)_j^i & =  \big[\tfrac{n-2k}{4k} \,v^2 -\tfrac{k}{n-2k}\, \dot v^2\big]\delta_{j}^i=:\kappa_2\,\delta_{j}^i,\\
\big( B_{g_v}\big)_\beta^\alpha & =  \big[-\tfrac{n-2k}{4k}\,v^2-v\,\dot v-\tfrac{k}{n-2k}\,\dot v^2\big]\delta_\beta^\alpha=:\kappa_3\,\delta_\beta^\alpha.
\end{split}\end{equation}
Its eigenvalues are $\kappa_1,\kappa_2,\kappa_3$ with multiplicities $1,N-1,p$, respectively.\\

Now, the covariance property \eqref{confequi} implies that
\begin{equation*}\label{covariance-r}
\mathcal N(u,g_E)=r^n \mathcal N(v,g_{cyl}).
\end{equation*}
Thus  we can rewrite  the original equation \eqref{eq} in this notation  as
\begin{equation}\label{equation-v}
  \mathcal N(v,g_{cyl})=0.
\end{equation}

\subsection{The fast-decay ODE solution}

In the paper \cite{Gonzalez-Mazzieri} the authors construct a very particular solution $U_1=U_1(r)$ for our problem \eqref{eq} on $\mathbb R^n\setminus\mathbb R^p$ that yields a complete metric near the singular set $\{r=0\}$ and has fast decay as $r\to\infty$. Here is the first place where we encounter the restriction $k=2$. More precisely:

\begin{pro}\cite{Gonzalez-Mazzieri}\label{ODE-study}
For each $0<p<\PP_2$, there exists a positive solution $U_1=U_1(r)$ for the equation
\begin{equation}\label{ODE-sigma2}
\sigma_2  \left( B_{g_{u}} \right)  = c u^{q}\quad\text{in}\quad\mathbb R^n\setminus\mathbb R^p,
\end{equation}
 satisfying:
\begin{itemize}
\item When $r\to 0$, the solution has the precise asymptotic behavior
$$U_1(r) \asymp v_\infty r^{-\frac{n-4}{4}}$$
where
\begin{equation}\label{v-infty}
(v_\infty)^{\frac{16}{n-4}}=c_{n,p,2}{\binom{n}{2}}^{-1}>0.
\end{equation}
and the constant $c_{n,p,2}$ is defined in \eqref{model-sigma-k}.

\item When $r\to +\infty$,
$$U_1(r) \asymp C r^{-\alpha_0-\frac{n-4}{4}}$$
for some $\alpha_0\in\lp 0,\frac{n-4}{4}\rp$, and some positive constant $C$.

\item $r^{\frac{n-4}{4}}U_1 $ is uniformly bounded for all $r>0$.

\item The metric $g_{U_1}:=(U_1)^{\frac{8}{n-4}} g_{E}$ belongs to the positive  $\Gamma_2^+$ cone.
\end{itemize}
\end{pro}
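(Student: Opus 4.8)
The statement is purely about ordinary differential equations, and the natural approach is a phase-plane analysis of the radial reduction of \eqref{equation-v}. First I would restrict to functions $v=v(t)$ of the cylindrical variable $t=-\log r$ alone, so that by \eqref{B-radial} the tensor $B_{g_v}$ is diagonal with eigenvalues $\kappa_1,\kappa_2,\kappa_3$ of multiplicities $1,N-1,p$. For $k=2$ one has
\[
\sigma_2(B_{g_v})=\binom{N-1}{2}\kappa_2^{2}+\binom{p}{2}\kappa_3^{2}+(N-1)p\,\kappa_2\kappa_3+\kappa_1\big[(N-1)\kappa_2+p\kappa_3\big],
\]
and since $\kappa_1$ is the only eigenvalue in which $\ddot v$ occurs — and it occurs linearly — equation \eqref{equation-v} becomes a quasilinear second-order ODE $a(v,\dot v)\,\ddot v=b(v,\dot v)\,v^{q}-d(v,\dot v)$, with $a(v,\dot v)=-v\big[(N-1)\kappa_2+p\kappa_3\big]$, $q=\tfrac{4n}{n-4}$, and $b,d$ polynomial in $(v,\dot v)$. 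On the region $\{a\neq0\}$ this is equivalent to the first-order autonomous system $\dot v=w$, $\dot w=F(v,w)$, so the whole problem becomes that of finding a suitable orbit of a planar flow.

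The distinguished rest point is $P_\infty=(v_\infty,0)$, where $v_\infty$ is given by \eqref{v-infty}: the constant $v\equiv v_\infty$ is a solution precisely because $g_{v_\infty}$ is a constant multiple of the product metric $\bH^{p+1}\times\bS^{N-1}$, whose $\sigma_2$-curvature is \eqref{model-sigma-k}, and the hypothesis $0<p<\PP_2$ guarantees via \eqref{set-dimension} that this model lies in $\Gamma_2^+$. In particular $a(v_\infty,0)\neq0$, so $P_\infty$ is a genuine, non-degenerate equilibrium, while the line $\{v=0\}$ is a degenerate invariant set. Linearizing at $P_\infty$ — the normalization fixed in \eqref{eq} is exactly the one turning this linearization into a constant-coefficient Jacobi-type operator — one finds indicial roots of opposite sign, hence a one-dimensional stable manifold $\cW^{s}(P_\infty)$ consisting of the solutions with $v(t)\to v_\infty$ as $t\to+\infty$; $U_1$ will be the one on $\cW^{s}(P_\infty)$.

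It remains to follow the relevant branch of $\cW^{s}(P_\infty)$ backwards in $t$ and show it is global, stays in $\{v>0\}\cap\Gamma_2^+$, and satisfies $v(t)\to0$ as $t\to-\infty$ at the advertised rate. I would do this by constructing a trapping region in the $(v,w)$-plane, bounded by the nullclines of the system and by the locus where the model leaves $\Gamma_2^+$, which is backward-invariant and on which $v$ is monotone; $0<p<\PP_2$ is what makes such a region nonempty. Then $v(t)\to0$, $w(t)\to0$ as $t\to-\infty$. For the decay rate, note that along such an orbit $v,\dot v,\ddot v$ are comparable as $t\to-\infty$, so every $\kappa_i=O(v^{2})$ while $cv^{q}=o(v^{2})$; hence the leading-order equation near $v=0$ is the homogeneous one $\sigma_2(B_{g_v})=0$, which under the ansatz $v\sim e^{\alpha t}$ reduces to a quartic in $\alpha$. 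One checks that for $0<p<\PP_2$ this quartic has a root $\alpha_0\in\big(0,\tfrac{n-4}{4}\big)$, and that it is the rate selected by the orbit; this gives $U_1(r)\asymp v_\infty r^{-(n-4)/4}$ as $r\to0$ and $U_1(r)\sim r^{-\alpha_0-(n-4)/4}$ as $r\to\infty$. Since $v$ stays in a bounded interval between $0$ and $v_\infty$, the quantity $r^{(n-4)/4}U_1=v(-\log r)$ is uniformly bounded; the $O(\cdot)$-bounds on derivatives follow by differentiating the ODE; and $g_{U_1}\in\Gamma_2^+$ is exactly the content of the trapping region. Translation invariance $t\mapsto t+c$ of the autonomous system yields a one-parameter family of such solutions, and fixing \eqref{v-infty} normalizes $U_1$.

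\textbf{Main obstacle.} The crux is the third step: producing the trapping region and proving the connecting orbit never leaves $\{v>0\}\cap\Gamma_2^+$ — equivalently, that $a(v,\dot v)$ never vanishes along it, so the quasilinear ODE does not degenerate. This rests on a careful sign analysis of $\sigma_1(B_{g_v})$ and $\sigma_2(B_{g_v})$ for the three-eigenvalue matrix \eqref{B-radial}, together with control of the discriminant of the quartic in $\alpha$. Both are manageable for $k=2$ and $0<p<\PP_2$, but the corresponding algebra becomes rapidly unwieldy for larger $k$, which is why the proposition — and hence Theorem \ref{main-theorem} — is stated only for $k=2$.
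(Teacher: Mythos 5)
Your overall plan — passing to cylindrical coordinates $t=-\log r$, restricting to radial $v=v(t)$, writing $\sigma_2(B_{g_v})$ from the $\kappa_i$ in \eqref{B-radial}, reducing to a planar autonomous system, and constructing a connecting orbit between the rest point $P_\infty=(v_\infty,0)$ and the degenerate corner $(0,0)$ — is exactly the route the paper sketches ("standard phase-plane analysis for the ODE \eqref{equation-v}"), and the reduction you write down, including the identification of $a(v,\dot v)=-v\big[(N-1)\kappa_2+p\kappa_3\big]$ as (a multiple of) the Newton tensor entry $S_1$ whose positivity in $\Gamma_2^+$ keeps the quasilinear ODE non-degenerate, is correct.

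However, there is a concrete error in your phase-plane picture. You assert that linearizing at $P_\infty$ produces "indicial roots of opposite sign, hence a one-dimensional stable manifold $\cW^s(P_\infty)$." That is not what the paper's own computation gives. In Section \ref{subsection:indicial-roots0} the characteristic polynomial for the $j=0$ mode is $f_0(\alpha)=b_0-b_1\alpha+b_2\alpha^2$ with $b_2<0$, vertex at $\alpha=p/2>0$, and $f_0(0)<0$ precisely when $0<p<\PP_2$; consequently the two roots $\gamma_0^\pm$ are either both real with $0<\gamma_0^-<p/2<\gamma_0^+$, or complex conjugate with $\re\gamma_0^\pm=p/2>0$. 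Since the corresponding solutions of the linearized ODE are $e^{-\gamma_0^\pm t}$, both modes decay as $t\to+\infty$: $P_\infty$ is a hyperbolic sink (or spiral sink), not a saddle, and its stable set is an open neighborhood, not a curve. There is therefore no distinguished branch of $\cW^s(P_\infty)$ to trace backwards. The selection of the particular orbit is made instead at the degenerate end $v=0$: after a blowup (e.g.\ in the variable $\alpha=\dot v/v$) the vanishing locus $\sigma_2=0$ picks out the admissible exit rates $\alpha_0^\pm$ of \eqref{alpha0pm}, and the solution $v_1$ is the separatrix emerging tangent to the slower rate $\alpha_0=\alpha_0^-$. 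Your trapping-region idea can still be made to work, but it must be set up as a forward-invariant region swept from $(0,0)$ into the basin of $P_\infty$ (or as a shooting/monotonicity argument), not as a backward continuation of a one-dimensional stable manifold. Relatedly, because $\gamma_0^\pm$ may be complex, $v_1(t)$ may spiral into $v_\infty$; the claim that $v$ "is monotone" on the trapping region cannot hold all the way to $P_\infty$ and should be restricted to the region near $v=0$.

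Two smaller points: (i) the comparison "$\kappa_i=O(v^2)$ while $cv^q=o(v^2)$" should read $\sigma_2(B_{g_v})\sim v^4$ versus $cv^q=o(v^4)$ (which still holds, since $q=\tfrac{4n}{n-4}>4$); (ii) the quartic $P(\alpha)$ obtained from $\sigma_2=0$ under the ansatz $v\sim e^{\alpha t}$ does not split neatly into "trivial" and "nontrivial" factors for $p\geq 2$, and the two physically relevant roots are exactly the $\alpha_0^\pm$ recorded in \eqref{alpha0pm}; the proposition's $\alpha_0\in(0,\tfrac{n-4}{4})$ is $\alpha_0^-$, and part of the work in \cite{Gonzalez-Mazzieri} is precisely to show the connecting orbit selects this root.
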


\begin{proof}[Sketch of the proof]
Although we will not give the full proof of this result, which is contained in \cite{Gonzalez-Mazzieri}, the key idea is to use the framework in cylindrical coordinates we have just presented and to find a solution $V_1$ to \eqref{equation-v} that only depends on the radial variable $t$.

From the formulas in \eqref{B-radial} one is able to write $\sigma_2(B_{g_v})$ in a reasonably simple form. Standard phase-plane analysis for the ODE \eqref{equation-v} completes the proof.  Now  we recover $U_1$ by setting, in the notation \eqref{notation-uv},
$$U_1(r)= r^{-\frac{n-4}{4}}V_1(-\log r).$$
\end{proof}

\begin{cor}
\label{cor-model-solution}
For $\ep>0$ we rescale
\begin{equation}\label{rescale}U_\ep(r)=\ep^{-\frac{n-4}{4}}U_1\lp\frac{r}{\ep} \rp. \end{equation}
Then
 $U_\ep$ is a solution of \eqref{eq} in $\mathbb R^n\setminus \mathbb R^p$ that only depends on the radial variable. Moreover, for $m$ large enough (independent of $\ep$),
\begin{equation}\label{behavior-zero}U_\ep(r)\asymp v_\infty \, r^{-\frac{n-4}{4}},\quad\text{when}\quad r\in\left(0,\tfrac{1}{m}\ep\right),\end{equation}
\begin{equation}\label{behavior-infty}
  U_\ep(r)\asymp C\ep^{\alpha_0} r^{-\alpha_0-\frac{n-4}{4}}\quad\text{when}\quad r\geq m\ep.
\end{equation}
\end{cor}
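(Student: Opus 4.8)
The plan is to obtain the corollary from the scale invariance of the $\sigma_2$--Yamabe equation on $\R^n\setminus\R^p$, which is a manifestation of the conformal covariance \eqref{confequi}. First I would observe that the dilation $\Phi_\ep\colon x\mapsto x/\ep$ is a diffeomorphism of $\R^n\setminus\R^p$ onto itself, since the linear subspace $\R^p$ is dilation--invariant, and that it is conformal for the Euclidean metric, pulling $g_E$ back to $\ep^{-2}g_E$. A direct substitution, relating the coordinate $r$ of the target to the coordinate $\rho=r/\ep$ of the source, shows that $\Phi_\ep$ pulls $g_{U_1}=(U_1)^{8/(n-4)}g_E$ back to $g_{U_\ep}=(U_\ep)^{8/(n-4)}g_E$ with $U_\ep$ as in \eqref{rescale}; in particular these two metrics are isometric. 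Since the $\sigma_2$--curvature and the condition of lying in $\Gamma_2^+$ are invariants of a metric, $g_{U_\ep}$ has the same constant $\sigma_2$--curvature as $g_{U_1}$ (supplied by Proposition \ref{ODE-study}) and lies in $\Gamma_2^+$; hence $\mathcal N(U_\ep,g_E)=0$, that is, $U_\ep$ solves \eqref{eq}. Moreover $U_\ep$ depends only on $r$ because $U_1$ does.

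For the two asymptotic regimes I would simply insert the expansions of Proposition \ref{ODE-study} into \eqref{rescale}. When $r\ll\ep$ we have $r/\ep\to0$, so $U_1(r/\ep)\asymp v_\infty\,(r/\ep)^{-\frac{n-4}{4}}$; multiplying by $\ep^{-\frac{n-4}{4}}$ cancels the powers of $\ep$ and gives $U_\ep(r)\asymp v_\infty\,r^{-\frac{n-4}{4}}$, which is \eqref{behavior-zero}. When $r\gg\ep$ we have $r/\ep\to\infty$, so $U_1(r/\ep)\sim(r/\ep)^{-\alpha_0-\frac{n-4}{4}}$; the prefactor $\ep^{-\frac{n-4}{4}}$ combines with $\ep^{\,\alpha_0+\frac{n-4}{4}}$ to leave $\ep^{\alpha_0}\,r^{-\alpha_0-\frac{n-4}{4}}$, which is \eqref{behavior-infty}.

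There is no genuine obstacle here; this is a routine rescaling. The only point deserving an explicit line is that the exponent $-\frac{n-4}{4}$ in \eqref{rescale} is exactly the one making the \emph{unnormalized} operator $\mathcal N(\,\cdot\,,g_E)$ invariant under $u\mapsto\ep^{-\frac{n-4}{4}}u(\,\cdot\,/\ep)$, so that the rescaled function solves the equation on the nose and not merely up to a constant factor. In Cartesian coordinates adapted to $\R^p$, where $g_E$ is flat and $B_{g_u}=g_E^{-1}\bigl[-u\,D^2u+\tfrac{n}{n-4}\,du\otimes du-\tfrac{2}{n-4}\,|du|^2 g_E\bigr]$ has entries quadratic in $(u,Du,D^2u)$, the scaling $u\mapsto\ep^{-a}u(\,\cdot\,/\ep)$ multiplies $\sigma_2(B_{g_u})$ by $\ep^{-4a-4}$ and $c\,u^q$ by $\ep^{-qa}$; with $q=\tfrac{4n}{n-4}$ these coincide precisely when $a=\tfrac{n-4}{4}$, in accordance with the covariance \eqref{confequi}.
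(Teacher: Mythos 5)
Your argument is correct, and it fills in what the paper leaves implicit: Corollary~\ref{cor-model-solution} is stated without proof, being treated as an immediate consequence of the dilation invariance of the problem on $\mathbb R^n\setminus\mathbb R^p$ together with the asymptotics supplied by Proposition~\ref{ODE-study}. Your realization of that invariance via the pullback isometry $\Psi_\ep\colon x\mapsto x/\ep$ (which preserves $\mathbb R^p$, hence $\mathbb R^n\setminus\mathbb R^p$, and sends $g_{U_1}$ to $g_{U_\ep}$ because $(\ep^{-\frac{n-4}{4}})^{\frac{8}{n-4}}=\ep^{-2}$ matches the conformal factor $\ep^{-2}$ of $\Psi_\ep^*g_E$) is clean, and correctly yields that $U_\ep$ solves \eqref{eq} and stays in $\Gamma_2^+$; the two asymptotic formulas then follow by direct substitution as you wrote. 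The closing check that $a=\frac{n-4}{4}$ is forced by the homogeneities $\sigma_2(B_{g_u})\mapsto\ep^{-4a-4}$ and $u^q\mapsto\ep^{-qa}$ with $q=\frac{4n}{n-4}$ is a correct and worthwhile sanity check, consistent with the conformal equivariance \eqref{confequi}.
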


In the following, we will set
$$\alpha_1:=\alpha_0+\tfrac{n-4}{4},$$
$$U_\ep(r)=r^{-\frac{n-4}{4}}V_\ep(-\log r),$$
for  $V_\ep=V_\ep(t)$, 
and
\begin{equation}\label{metric-v-epsilon}
g_{V_\ep} = V_\ep^{\frac{8}{n-4}} g_{cyl}.
\end{equation}
Note that 
\begin{equation}\label{V1-Vep}
V_\ep(t)=V_1(-\log r +\log \ep).
\end{equation}

We will consider the linearized operator in both coordinate systems,
\begin{equation}\label{two-linearizations}
\mathcal L_\ep :=\mathbb L(U_\ep,g_E) \quad\text{and}\quad L_\ep:=\mathbb L(V_\ep,g_{cyl}).
\end{equation}
These are our model operators. Their relation is given by the covariance property \eqref{confequilin}, more precisely,
\begin{equation}\label{rewrite}
\begin{split}
\mathcal L_\ep \varphi=r^n L_\ep [r^{\frac{n-4}{4}}\varphi].
\end{split}
\end{equation}
For convenience, we will also set 
$$w=r^{\frac{n-4}{4}} \varphi.$$
We observe that 
the operator $L_\ep$ has better mapping properties in weighted Sobolev spaces that we will explain in Section \ref{section:function-spaces}.


\section{The approximate solution}\label{section:approximate-solution}

Let $\Lambda$ be a smooth, closed submanifold in $M$  of dimension $p$, such that $0<p<\PP_2$. Here we construct an approximate solution $\bar u_\epsilon>0$ to problem \eqref{eq} on $M\setminus \Lambda$ which is singular exactly at $\Lambda$ with a precise blowup rate and, in addition, remains in the positive  $\Gamma_2^+$ cone.

Let $\mathcal{T}_\rho$ be the tubular neighborhood of radius $\rho$ around $\Lambda$. It is well known that $\mathcal{T}_\rho$ is a disk bundle over $\Lambda$; more precisely, it is diffeomorphic to the bundle of radius $\rho$ in the normal bundle $\mathcal N\Lambda$.  The Fermi coordinates will be constructed as coordinates in the normal bundle transferred to $\mathcal{T}_\rho$ via such diffeomorphism. More precisely, let $r$ be the distance to $\Lambda$, which is well defined and smooth away from $\Lambda$ for small $\rho$. Let also $z$ be a local coordinate system on $\Lambda$ and $\theta$ the angular variable on the sphere in each normal space $\mathcal N_z\Lambda$. We denote by $\mathcal B_{\rho}$ the ball of radius $\rho$ in $\mathcal N\Lambda$ at a point $z\in\Lambda$. Finally we let $x$ denote the rectangular coordinate in these normal spaces, so that $r=|x|$, $\theta=\frac{x}{|x|}$. Thus we can identify $\mathcal T_\rho$ with $\mathcal B_\rho\times\Lambda$, parameterized with coordinates $y=(x,z)$, $x\in \mathcal B_\rho$, $z\in \Lambda$.

In the classical paper of Mazzeo-Pacard \cite{Mazzeo-Pacard}, the global approximate solution is constructed from  the model solution $U_\epsilon$ given in \eqref{rescale}, and extended to zero with the introduction of a  cutoff in a ball of radius $r_0>>\epsilon$. This does not work for $\sigma_2$ since such approximate solution may not be in the  positive $\Gamma_2^+$ cone. Instead, we need to make a more refined choice of this cutoff  based on Lemma 7 from Guan-Wang \cite{Guan-Wang:inequalities}, where the authors are able to transplant an isolated singularity with given asymptotics to any conformally flat metric, while still remaining in the positive $\Gamma_2^+$ cone.
 We remark that this lemma is the key ingredient in the construction from  \cite{Guan-Lin-Wang} of manifolds of positive $\sigma_k$--curvature by connected sums of compact manifolds.

More precisely, we will show:

\begin{pro}\label{prop-gluing-Lambda}
 Let $g_0=u_0^{\frac{8}{n-4}}g_{M}$ be a smooth metric on a tubular neighborhood $\mathcal{T}_\rho$ of $\Lambda$ in $M$ satisfying $g_0\in \Gamma_2^+$. Fix  any $0<\alpha_1<\frac{n-4}{2}$. Then there exists a conformal metric $$g=u^{\frac{8}{n-4}}g_{M}\quad\text{on}\quad\mathcal T_\rho\setminus\{r=0\}$$
 that belongs to the positive $\Gamma_2^+$ cone  and satisfies the following:
\begin{align}
 & u(r,\theta,z) =u_0(r,\theta,z) \mbox{ in }\mathcal T_\rho\setminus \mathcal T_{\varrho_1}, \label{behavior-near}\\
  &u(r,\theta,z)=r^{-\alpha_1} \mbox{ in }\mathcal T_{\varrho}\setminus\{r=0\},\label{behavior-far}
\end{align}
for some $0<\varrho<\varrho_1<\rho$.
In addition, if $g_0$ is non-degenerate, then $g$ is also non-degenerate.
\end{pro}

The proof will follow in two steps: first we consider the model case  $\mathbb R^n\setminus \mathbb R^p$ (Subsection \ref{subsection:model-singularity}), and then transplant this construction to $M\setminus\Lambda$ using Fermi coordinates (Subsection \ref{subsection:Fermi-coordinates}).

\subsection{The model singularity}\label{subsection:model-singularity}

Assume that  we are in the flat model
$$\mathbb R^n\setminus \mathbb R^p=\mathbb R^+_r\times \mathbb S_\theta^{N-1}\times \mathbb R_z^p,$$
with the Euclidean metric in polar-Fermi coordinates $g_E$, given by \eqref{metric-gE}. 

We will show that, given a  metric of the form $g_0=u_0^{\frac{8}{n-4}}|dx|^2$ in a tubular neighborhood  $\mathcal T_\rho(\mathbb R^p)$ in $\mathbb R^n$, it is possible to construct a conformal metric that is singular exactly on $\mathbb R^p$. Without loss of generality, we take $\rho=1$. The main idea is to adapt the proof in \cite{Guan-Wang:inequalities} for isolated singularities, which is possible since the radial variable $r$ plays the same role. Nevertheless, the difference here is that  we have an additional coordinate $z\in\mathbb R^p$, so there is an extra block $J_1$ in all the matrices (see formula \eqref{matrix-J} below).


We first change the notation for the conformal factor, writing
\begin{equation}\label{change-metric}
g=e^{-2\omega}g_0=e^{-2(\omega+\omega_0)}g_E=u^{\frac{8}{n-4}}g_E,
\end{equation}
 where we have set $\omega_0=-\frac{4}{n-4}\log u_0$. The
 transformation law for the Schouten tensor is
 \begin{equation}
 A_g=D^2 \omega+\nabla\omega\otimes \nabla\omega-\tfrac{1}{2}|\nabla \omega|^2_{g_0} g_0+A_{g_0}.
 \end{equation}
 In this particular case, using  the Euclidean metric as background,
\begin{equation}\label{Schouten-conformal-omega}
\begin{split}
  A_{g}&= D^2(\omega+\omega_0)+\nabla (\omega+\omega_0)\otimes \nabla(\omega+\omega_0)-\tfrac{1}{2}|\nabla (\omega+\omega_0)|^2I\\
&= D^2 \omega +\nabla \omega\otimes\nabla \omega+\nabla \omega\otimes \nabla \omega_0+\nabla \omega_0\otimes\nabla \omega\\&
\quad-\lp\tfrac{1}{2}|\nabla \omega|^2+ \langle \nabla \omega,\nabla \omega_0\rangle\rp I+A_{g_0},
\end{split}
\end{equation}
 where all the derivatives are taken with respect to the Euclidean metric on $\mathbb R^n$.

In the gluing process we will  study carefully the neck region. In order to match the behaviors \eqref{behavior-near}--\eqref{behavior-far},  we seek $\omega$ so that
\begin{equation*}
\left\{\begin{split}
  &\omega=0 \quad\text{near}\quad r=1, \\
  &\omega=\alpha_2 \log r-\omega_0 \quad\text{near}\quad  r=0,
\end{split}\right.
\end{equation*}
and such that $A_{  g}$ remains in the positive  $\Gamma_2^+$ cone. Here we have taken $\alpha_2= \tfrac{4}{n-4}\,\alpha_1$.
As a first approximation, we impose $\omega$ to be of the form
 \begin{equation}\label{omega-prime}
   \omega'(r)=\frac{\phi(r)}{r},
 \end{equation} 
for some suitable transition function $\phi(r)$ satisfying
\begin{equation*}
\left\{\begin{split}
  &\phi( r)=0 \quad\text{near}\quad r=1, \\
  &\phi( r)=\alpha_2 \quad\text{near}\quad  r=0.
\end{split}\right.
\end{equation*}
By straightforward calculation from  \eqref{Schouten-conformal-omega} we have that
\begin{equation}\label{formula1000}
A_{  g}=A_{g_0}+ J
+E(\phi),
\end{equation}
where
\begin{equation}\label{matrix-J}
 J=\begin{bmatrix}
J_0 & 0\\
0&  J_1
\end{bmatrix}
\end{equation}
for
\begin{equation*}
J_0=\frac{2\phi-\phi^2}{2 r^2}\,I_{N\times N}+\lp \frac{\phi'}{ r}+\frac{\phi^2-2\phi}{ r^2}\rp \theta\otimes \theta \quad\text{and}\quad   J_1=-\frac{\phi^2}{2r^2}I_{p\times p},
\end{equation*}
and $E$ is the perturbation term. Now,
$$\sigma_2(  g^{-1} A_{  g})=e^{-4(  \omega+\omega_0)} \sigma_2(g_E^{-1} A_{  g}).$$

Let us take $r_0$ be a small enough (to be specified later), and  set $\phi(r)=0$ for $r\in[r_0,1)$, so that $g=g_0$.  To extend $\phi$ to the whole interval $(0,r_0)$ we proceed as follows. Since $A_{g_0}$ belongs to  the positive  $\Gamma_2^+$ cone and is non-degenerate, which are open conditions, we can take $r_1\in(0,r_0)$ and $\phi:[r_1,r_0)\to[0,\alpha_1)$ non-increasing such that $A_{g}$ also belongs to the positive $\Gamma_2^+$ cone, is non-degenerate, and $\phi(r_1)>0$  (but very small).

Let us consider first the unperturbed matrix $J$. One easily calculates
\begin{equation*}
  \sigma_2(J_0)=\frac{N-1}{2}\lp\frac{2\phi-\phi^2}{2r^2}\rp^2
  \left[N-4+4\frac{r\phi'}{2\phi-\phi^2}\right]
\end{equation*}
and
\begin{equation*}
  \sigma_1(J_0)=\frac{2\phi-\phi^2}{2r^2}
  \left[N-2+2\frac{r\phi'}{2\phi-\phi^2}\right].
\end{equation*}
We set  $\phi$ to be a solution of the ODE
\begin{equation*}
4\frac{r\phi'}{2\phi-\phi^2}=-\frac{1}{2},
\end{equation*}
with initial condition given by the above value for $\phi(r_1)$.
This ODE is easily integrated, which yields,
\begin{equation*}
  \phi(r)=\frac{2\delta}{\delta+r^{\frac{1}{4}}},
\end{equation*}
for some small constant $\delta>0$ determined by the given $\phi(r_1)$. 

With this choice of $\phi$ we can calculate the eigenvalues of  the full matrix $J$,
\begin{equation}\label{eigenvalues-pp}
  \varsigma_1=-\frac{5}{8}\,\frac{2\phi-\phi^2}{r^2},\quad  \varsigma_2=\frac{2\phi-\phi^2}{2r^2},\quad \varsigma_3=-\frac{\phi^2}{2r^2},
\end{equation}
with multiplicities $1$, $N-1$ and $p$, respectively.  After some tedious but straightforward calculation, one has that 
\begin{equation*}
\begin{split}
   \sigma_2( J)&=(N-1)\varsigma_1\varsigma_2+p\varsigma_1\varsigma_3+\binom{N-1}{2}\varsigma_2^2+\binom{p}{2}\varsigma_3^2
   \\&+p(N-1)\varsigma_2\varsigma_3>0, \\
\sigma_1(J)&=\varsigma_1+(N-1)\varsigma_2+p\varsigma_3>0,
\end{split}
\end{equation*}
which means that $J$ is in the positive  cone $\Gamma_2^+$.

Moreover, the  error term $E(\phi)$ can be estimated  by
\begin{equation}\label{Ephi}|E(\phi)|\leq O(|\nabla \omega_0|)\frac{\phi}{ r}\leq \tau \frac{\phi}{r^2}
\end{equation}
for some $\tau$ small enough but depending only on $n,p,\omega_0$, thus it is only a perturbation of the eigenvalues
\eqref{eigenvalues-pp} for $r$ small. For this, here we choose the initial $r_0$ small enough, depending only on $\omega_0$.

Since the matrix $J$ belongs to the positive $\Gamma_2^+$ cone, also does $A_{  g}$ in the region $0<r<r_1$.
 Then choose $r_2\in(0,r_1)$ such that $\phi(r_2)=\alpha_2$, which is possible because $0<\alpha_2<2$ by hypothesis and define $\phi(r) = \alpha_2$ on $[0,r_2]$. The first statement of  Proposition \ref{prop-gluing-Lambda} in the $\mathbb R^n\setminus \mathbb R^p$ setting follows by smoothing out the conformal metric $ g$, which is already $\mathcal C^{1,1}$ by construction. 
 
 For the non-degeneracy statement, let $\omega$ be conformal factor we have just constructed. Let us look closely at the linearized operator $\mathbb L (\omega,g_0)$ (in the notation \eqref{change-metric}). For this, recall the formulas \eqref{linope}-\eqref{calculation-linearized2}. Non-degeneracy for a related linearized operator will be studied in detail in Section \ref{section:injectivity}. Nevertheless, in our case we are working on a bounded region away from the singularity where everything is smooth, non-degeneracy is much simpler as we do not have singular points. 
 
Observe first the general formula \eqref{calculation-linearized2}. Since the Newton tensor $T^1(A_g)$ is positive definite, we just need to look at the different blocks coming from $\left.\frac{dA_g}{ds}\right|_{s=0}$ and, in particular, the variation of the matrix $J$. In the radial direction, non-degeneracy reduces to study the Wronskian of a  regular ODE in a bounded interval. Since it does not vanish near $r_1$ by the non-degeneracy hypothesis of $g_M$, then it is non-vanishing everywhere.

Along other directions, we need to take into account the other blocks of the matrix $J$. When $J$ acts on the angular direction, the only contribution to the zero-th order potential term in $\mathbb L(\omega,g_0)\varphi$ (which may spoil non-degeneracy) is $-\frac{1-\phi}{r^2}\varphi$, which is negative by construction. Similarly, $J_1$ part contributes with a term $-\frac{\phi}{r^2}\varphi$, which is also negative and makes the operator even more coercive.

\subsection{Fermi coordinates}\label{subsection:Fermi-coordinates}

Now we consider a general singular set $\Lambda$, which is taken to be a smooth compact, connected, closed, submanifold in $M$ of dimension $p$, and finish the proof of Proposition \ref{prop-gluing-Lambda}.

We use  Fermi coordinates in a tubular neighborhood  $\mathcal T_\rho$ around $\Lambda$. Any metric $g$ can be compared to the canonical metric $g_E$ from \eqref{metric-gE}. Indeed,
\begin{equation}\label{metric-Fermi}
g=
\begin{pmatrix}1&0& O(r)\\
0&r^2g_\theta+O(r^4)&O(r^2)\\
O(r)&O(r^2)&g_\Lambda+O(r)\end{pmatrix},
\end{equation}
where $g_\Lambda$ is the metric on $\Lambda$.
This expansion is classical, see, for instance, \cite{Finn-McOwen}.

Then, the only difference with the arguments in Section \ref{subsection:model-singularity} is to keep track of the extra perturbation terms.  More precisely, the background metric in Fermi coordinates (given in the formula \eqref{metric-Fermi}) near the singular set differs from the model metric  \eqref{metric-gE} in two ways:
\begin{itemize}
\item Perturbation terms of order (at least) $O(r)$ as $r\to 0$.
\item In the $z$ direction the flat metric $|dz|^2$ is replaced by $g_\Lambda$.
\end{itemize}
Similarly to the above, expression \eqref{formula1000} is rewritten as
\begin{equation*}
A_{g}=A_{g_0}+ J
+E(\phi),
\end{equation*}
where
\begin{equation*}
  J=\begin{bmatrix}
 J_0 & 0\\
0&  J_1
\end{bmatrix}
\end{equation*}
for
\begin{equation*}
   J_0=\frac{2\phi-\phi^2}{2 r^2}\,I_{N\times N}+\lp \frac{\phi'}{ r}+\frac{\phi^2-2\phi}{ r^2}\rp\theta\otimes \theta,\quad\quad   J_1=-\frac{\phi^2}{2r^2}g_\Lambda,
\end{equation*}
and $E$ is the perturbation term.

To handle the block $J_1$  we work in small enough neighborhoods of $z\in \Lambda$ and write $g_\Lambda$ in normal coordinates. Now, for the term $E(\phi)$ we claim that we still have that  $|E(\phi)|\leq \tau\frac{\phi}{r^2}$ for some small enough $\tau$. To see this,  one needs to control the $O(r)$ terms in the expansion  of the metric $g_{\mathbb R^n}$ in Fermi coordinates from \eqref{metric-Fermi}. For instance, for the inverse of the metric we have
\begin{equation*}
(g_{\mathbb R^n})^{-1}=
\begin{pmatrix}1+O(r)&O(r)& O(r)\\
O(r)&r^{-2}g_\theta+O(r^{-1})&O(1)\\
O(r)&O(1)&g_\Lambda^{-1}+O(r)\end{pmatrix},
\end{equation*}
the  Christoffel symbols of $g_{\mathbb R^n}$
\begin{equation*}
\begin{array}{lll}
\Gamma_{rr}^r= O(r),&\Gamma_{r\theta}^r=O(r^2),&  \Gamma^r_{rz}=O(r),  \\
\Gamma_{\theta \theta}^r=r g_\theta+O(r^2),&\Gamma_{\theta z}^r=O(r),&  \Gamma^r_{z z}=O(1),
\end{array}
\end{equation*}
and the Hessian of a function  $\omega=\omega(r)$
\begin{equation}\label{Hessian}
 D^2\omega=\begin{pmatrix}
\partial_{rr} \omega +O(r)\partial_r \omega & O(r^2)\partial_r \omega& O(r)\partial_r \omega\\
 O(r^2)\partial_r \omega & -[r g_\theta+O(r^2)]\partial_r \omega & O(r)\partial_r \omega\\
 O(r)\partial_r \omega & O(r)\partial_r \omega & O(1)\partial_r \omega
\end{pmatrix}.
\end{equation}
Therefore, the perturbation terms do not change the eigenvalues as $r\to 0$.

The proof of Proposition \ref{prop-gluing-Lambda} is completed. 

\qed

\subsection{Construction of $\bar u_\ep$}\label{subsection:approximate-sol}

Let $M$ be a $n$-dimensional, compact, smooth, Riemannian manifold of constant $\sigma_2$--curvature (and in the positive $\Gamma_2^+$ cone), in the hypothesis of Theorem \ref{main-theorem}. We would like to transplant the singular metric with  conformal factor $U_\ep(r)$ to this background metric, for $\ep$ small enough. 

We first use Proposition \ref{prop-gluing-Lambda} to construct a non-degenerate metric $g_*=u_*^{\frac{8}{n-4}}g_M$ on $M$ satisfying 
\begin{equation}\label{rescaling-metric}
u_* =\left\{\begin{split}
&r^{-\alpha_1}\quad \text{in }\mathcal T_{\varrho}(\Lambda)\setminus \Lambda,\\
& 1 \quad \text{outside }\mathcal T_{\varrho_1}(\Lambda),
\end{split}\right.
\end{equation}
in the positive $\Gamma_2^+$ cone.   Then rescale
 $$g_\ep=u_\ep^{\frac{8}{n-4}}g_*,\quad \text{for}\quad u_\ep:=\ep^{\alpha_0}u_*.$$
Looking at the asymptotic behavior from \eqref{behavior-infty}, it is clear that we can patch both functions $u_\ep$ and $U_\ep$ (as they have the same asymptotics in $r$ and $\ep$) in order to construct a globally defined metric
\begin{equation}\label{background-metric-u}
\bar g_{\eps}=\bar u_\ep^{\frac{8}{n-4}}g_M\quad\text{on }M\setminus \Lambda
\end{equation}
in the positive  $\Gamma_2^+$ cone. Here $\varrho,\varrho_1$ do not depend on $\ep$, and satisfy $\varrho,\varrho_1>>m\ep$ as $\ep \to 0$.


Finally, the behavior of our approximate solution $\bar u_\epsilon$ near the singular set  is given by
$$\bar u_\ep =U_\ep \asymp v_\infty r^{-\frac{n-4}{4}} \quad\text{when}\quad r< \tfrac{1}{m}\ep,$$
which agrees with \eqref{asymptotic-behavior}.

\subsection{The non-linear problem}\label{subsection:estimates}

Here we check that, indeed, $\bar u_\ep>0$ is a good approximate solution. For this, it is better to switch to the $v$-notation in Fermi coordinates using the cylindrical variable $t=-\log r$. Thus, with some abuse of notation, we set $g_{M,cyl}$ to be the background metric on $M$ but using cylindrical Fermi coordinates around $\Lambda$ and the normalization \eqref{change:uv}. The approximate solution $\bar u_\ep$ in this new setting will be denoted by $\bar v_\ep$, and the approximate metric \eqref{background-metric-u} by
\begin{equation}\label{background-metric-v}
\bar g_{\ep,cyl}=\bar v_\ep^{\frac{8}{n-4}}g_{M,cyl}\quad\text{on }M\setminus\Lambda.
\end{equation}
We would like to solve the problem
\begin{equation}
\label{eq-t}
  \mathcal{N} (v, g_{M,cyl})   := \sigma_2  \left( B_{g_{v}}
  \right)  - c |v|^{q-1}v  = 0.
\end{equation}
We denote by $\bar L_\ep$  the linearized operator in this notation, given by $\bar L_\ep:=\mathbb L(\bar v_\ep,g_{M,cyl})$.
Then we can rewrite \eqref{final-equation} in cylindrical coordinates as
\begin{equation}\label{eq-tt}
\bar{L}_\ep[w] +\bar f_\ep + \bar Q_\ep[w]=0,
\end{equation}
where we have defined
\begin{equation*}
\begin{split}
&\bar f_\ep:=\mathcal N(\bar v_\ep,g_{M,cyl}),\\
&\bar Q_\ep[w]:= \mathcal N(\bar v_\ep+w,g_{M,cyl})-\mathcal N(\bar v_\ep,g_{M,cyl})-\bar L_\ep[w].
\end{split}
\end{equation*}
We prove:

\begin{pro}\label{prop:compare-metric}
For the error term we have the estimate
\begin{equation*}
\bar f_\ep=\begin{cases}
O(r),&\quad r\in (0,\frac{1}{m}\ep),\\
O(\ep),&\quad r\in (\frac{1}{m}\ep,m\ep),\\
O(\ep^{4\alpha_0} r^{-4\alpha_0+1}),&\quad r\in(m\ep,\varrho),\\
O(\ep^{4\alpha_0}),&\quad r>\varrho.
\end{cases}
\end{equation*}
The constants may depend on $m$ but not on $\ep$.
\end{pro}

\begin{proof}
On the one hand, in the neighborhood $\mathcal T_\varrho$ we have defined $\bar v_\ep=V_\ep$, the model solution from Corollary \ref{cor-model-solution}. Observe that, in the model case  $\mathcal N(V_\ep, g_{cyl})\equiv 0$ from Proposition \ref{ODE-study}.
  In order to calculate the error $\mathcal N(V_\ep,g_{M,cyl})$ in this region we  compare the background metric $g_{M}$ in Fermi coordinates given in \eqref{metric-Fermi} to the model  $g_{E}$ from \eqref{metric-gE}  when $r\to 0$ as we did in the proof of Proposition \ref{prop-gluing-Lambda}
(one can also refer to the arguments in the proof of Proposition 2.19 of \cite{Mazzeo-Smale}).

On the other hand,  away from the singular set (say, $r>\varrho$), $\bar v_\ep$ is essentially a factor of $\ep^{\alpha_0}$ times a smooth function, thus we can calculate the error by keeping track of this rescaling in our
definition of the nonlinear operator.

With this we can conclude, with respect to the background metric $g_M$,
\begin{equation*}
B_{\bar g_v}=
\begin{cases}
B_{v_\infty,cyl}+O(r),&\quad r\in (0,\frac{1}{m}\ep),\\
B_{V_\ep,cyl}+O(r),&\quad r\in (\frac{1}{m}\ep,m\ep),\\
B_{V_1(-\log r+\log\ep),cyl}+O(V_\ep^2 r),&\quad r\in(m\ep,\varrho),\\
\ep^{2\alpha_0}B_{v_*,cyl}+O(v_*^2\,r),&\quad r\in(\varrho,\varrho_1),  \\
\ep^{2\alpha_0}B_{g_M},&\quad r>\varrho_1.
\end{cases}
\end{equation*}
Here we have denoted by $v_*$ the function $u_*$ with the normalization \eqref{change:uv}. Thus
\begin{equation*}
\sigma_2(B_{\bar g_v})=
\begin{cases}
cv_\infty^{q-1}+O(r),&\quad r\in (0,\frac{1}{m}\ep),\\
cV_\ep^{q-1}+O(r),&\quad r\in (\frac{1}{m}\ep,m\ep),\\
cV_1^{q}(-\log r+\log\ep)+O(V_\ep^4r),&\quad r\in(m\ep,\varrho),\\
O(\ep^{4\alpha_0}),&\quad r>\varrho,
\end{cases}
\end{equation*}
which yields the conclusion (recall the relation \eqref{V1-Vep}).
\end{proof}


\section{Explicit calculations in the model case}\label{section:explicit}

In this section we work with the model case $\mathbb R^n\setminus\mathbb R^p$ and the metric $g_{V_\ep}$ given in \eqref{metric-v-epsilon}, in order to study the model linearization $\mathcal L_\ep$ given in  \eqref{two-linearizations}. These will be needed in Section \ref{subsction:model-linearization} when we study the full linearized operator. 

We first find the general structure of $\mathcal L_\ep$. Since this part is valid for all $k<\frac{n}{2}$, we do not restrict to $k=2$ in Lemma \ref{lema:model-linearization} below.

Take now a conformal perturbation of the metric $g_{V_\ep}$ on $\mathbb R^n\setminus \mathbb R^p$, i.e, for $s\in\mathbb R$, set
$$s\mapsto g_s:=(V_\ep+sw)^{\frac{4k}{n-2k}}\left[ dt^2 +  g_{\theta} + e^{2t}\,\delta_{\a \b} \,dz^\a \otimes dz^\b\right],$$
and consider  $B_s$  the symmetric $(1,1)$-tensor given by
\begin{equation}\label{Bs}B_s:=\tfrac{n-2k}{2k} (V_\ep+sw)^{\frac{2n}{n-2k}} g_s^{-1}A_{g_s},\end{equation}
that is,
\begin{equation}\label{calculation-linearized1}
\begin{split}
L_\ep[w]:=\left.\frac{d}{ds}\right|_{s=0} \mathcal N (V_\ep+sw,g_{cyl})
=\left.\frac{d}{ds}\right|_{s=0}\sigma_k(B_s) -cq(V_\ep)^{q-1}w.
\end{split}\end{equation}

Our first result is a structure statement:

\begin{lem}\label{lema:model-linearization}
The linearized operator for the model in cylindrical coordinates is given by
\begin{equation}\label{L-epsilon}
   L_\ep[w]=a_0^\ep w + a_1^\ep \partial_t w +a_2^\ep\partial_{tt} w+a_3^\ep\Delta_\theta w+a_4^\ep e^{-2t}\Delta_z w,
\end{equation}
where the coefficient functions $a_\ell^\ep=a_\ell^\ep(t)$ are given in the proof below. Moreover, $a^\ep_2,a^\ep_3,a^\ep_4$ have a sign, so this is an elliptic operator.
\end{lem}

\begin{proof}
We follow the calculations in \cite{Mazzieri-Segatti} for the $\mathbb R^N\setminus\{0\}$ case. First
recall from \eqref{B-radial}, for $B_0(:=B_{g_{V_\ep}})$, that
\begin{equation}\label{B-radial2}\begin{split}
\big( B_{0}\big)_t^t & =    -\tfrac{n-2k}{4k} (V_\ep)^2  - V_\ep\ddot{V_\ep} +\tfrac{n-k}{n-2k}\dot{V_\ep}^2=:\kappa_1, \\
\big( B_{0}\big)_j^i & =  \left[\tfrac{n-2k}{4k}(V_\ep)^2-\tfrac{k}{n-2k} \dot{V_\ep}^2 \right] \delta_i^j =:\kappa_2\,\delta_i^j ,\\
\big( B_{0}\big)_\b^\a & =   -\tfrac{n-2k}{4k}\big(V_\ep+\tfrac{2k}{n-2k}\,\dot V_\ep\big)^2 \delta_\b^\a=:\kappa_3\,\delta_\b^\a,
\end{split}\end{equation}
and the rest of the entries of the matrix vanish. Moreover, for the diagonal matrix $B_0$,
\begin{equation}\label{S}\begin{split}
& [T^{k-1}(B_0)]_t^t= \sum_{m=0}^{k-1}(-1)^{k-1-m}\sigma_m(B_0)\kappa_1^{k-1-m}=:S_1,\\
&  [T^{k-1}(B_0)]_j^i= \left[\sum_{m=0}^{k-1}(-1)^{k-1-m}\sigma_m(B_0)\kappa_2^{k-1-m}\right]\delta_j^i=:S_2 \,\delta_i^j,\\
&  [T^{k-1}(B_0)]_\beta^\alpha= \left[\sum_{m=0}^{k-1}(-1)^{k-1-m}\sigma_m(B_0)\kappa_3^{k-1-m}\right]\delta_\beta^\alpha=:S_3\,\delta_\beta^\alpha.
\end{split}\end{equation}
Note that, since $g_{V_\ep}$ belongs to the positive  $\Gamma_k^+$ cone, then its $(k-1)$-Newton tensor $T^{k-1}$ is positive definite, so the quantities $S_1,S_2,S_3$ are strictly positive.
We recall from \eqref{Bs} that
\begin{equation*}\begin{split}
B_s&=\tfrac{n-2k}{2k}(V_\ep+sw)^2g_{cyl}^{-1}\,A_{g_{cyl}}-(V_\ep+sw)g_{cyl}^{-1}\,D^2(V_\ep+sw)\\
&+\tfrac{n}{n-2k}g_{cyl}^{-1}\,d(V_\ep+sw)\otimes d(V_\ep+sw)-\tfrac{k}{n-2k}|d(V_\ep+sw)|^2_{g_{cyl}}I,
\end{split}\end{equation*}
from where it is easy to calculate its variation:
\begin{equation*}
\begin{split}
\Big.\frac{d\big( B_{s}\big)_t^t}{ds}\Big|_{s=0} & =   -V_\ep \,\partial_{tt}w+\tfrac{2(n-k)}{n-2k}\, \dot V_
\ep\, (\partial_t w) - \left(\tfrac{n-2k}{2k} \,V_\ep+\ddot V_\ep\right)w, \\
\Big.\frac{d\big( B_{s}\big)_j^i}{ds}\Big|_{s=0} & =  -V_\ep \,g_\theta^{il}\,(D^2_\theta w)_{lj}-\tfrac{2k}{n-2k}\,\dot V_\ep \,\delta_j^i\,\partial_t w+\tfrac{n-2k}{2k}\,V_\ep\,\delta_j^i \,w,\\
\Big.\frac{d\big( B_{s}\big)_\b^\alpha}{ds}\Big|_{s=0} & =  -V_\ep\,e^{-2t} \partial_{\alpha\beta} w +\left[-\tfrac{2k}{n-2k} \,\dot V_\ep-V_\ep\right]\delta_\beta^\alpha\,\partial_t w\\&\quad+\left[-\tfrac{n-2k}{2k}\,V_\ep +\dot V_\ep\right]w\delta_\beta^\alpha.
\end{split}
\end{equation*}
Substituting \eqref{B-radial2} above one arrives at
\begin{equation*}\begin{split}
  \Big.\frac{d\big( B_{s}\big)_t^t}{ds}\Big|_{s=0} & =   (V_\ep)^{-1}\,(B_0)_t^t\,w -\tfrac{n-k}{n-2k}\,(V_\ep)^{-1}(\dot V_\ep)^2w-V_\ep\,\partial_{tt} w\\&\quad+\tfrac{2(n-k)}{n-2k}\dot V_\ep\partial_t w-\tfrac{n-2k}{4k}V_\ep w,\\
  \Big.\frac{d\big( B_{s}\big)_j^i}{ds}\Big|_{s=0} & =  (V_\ep)^{-1}\,w\, (B_0)_j^i+\tfrac{k}{n-2k}\,(V_{\ep})^{-1}(\dot V_\ep)^2 \,\delta_j^i\,w + \tfrac{n-2k}{4k}V_\ep \delta_j^i w\\&\quad-V_\ep (g_\theta)^{il} (D^2_\theta w)_{lj}
  -\tfrac{2k}{n-2k}\,\delta_j^i \,\dot V_\ep \,\partial_t w,   \\
  \Big.\frac{d\big( B_{s}\big)_\beta^\alpha}{ds}\Big|_{s=0} & =  (V_\ep)^{-1}\, w\,(B_0)_\beta^\alpha +\tfrac{k}{n-2k}\,(V_\ep)^{-1}(\dot V_\ep)^2 \delta_\beta^\alpha\,w-\tfrac{n-2k}{4k} V_\ep \delta_\beta^\alpha w \\&\quad+\left[-V_\ep-\tfrac{2k}{n-2k}\,\dot V_\ep\right]\delta_\beta^\alpha\,\partial_t w
  -V_\ep\, e^{-2t}\,\partial_{\alpha\beta} w.
  \end{split}
  \end{equation*}
Now, we can use the formula
$$k\sigma_k(B_0)=\trace \left(T^{k-1}(B_0)\cdot B_0\right),$$
to write
\begin{equation*}
\begin{split}
\Big.\frac{d}{ds}\Big|_{s=0}&\sigma_k(B_s)=k\sigma_k(B_0)\,(V_\ep)^{-1} w\\&+S_1\left[ -\tfrac{n-k}{n-2k}(V_\ep)^{-1} (\dot V_\ep)^2 w -\tfrac{n-2k}{4k}V_\ep w-V_\ep\,\partial_{tt}w+\tfrac{2(n-k)}{n-2k}\dot V_\ep\partial_t w\right]\\
&+S_2\left[\tfrac{k}{n-2k}(N-1) (V_\ep)^{-1}(\dot V_\ep)^2w+ (N-1)\tfrac{n-2k}{4k}V_\ep  w-V_\ep \Delta_\theta w\right.\\&\left.\qquad\quad-\tfrac{2k}{n-2k}(N-1)\dot V_\ep\partial_t w\right]\\
&+ S_3\left[\tfrac{k}{n-2k}p (V_\ep)^{-1} (\dot V_\ep)^2 w-p\tfrac{n-2k}{4k} V_\ep w+p\lp -V_\ep-\tfrac{2k}{n-2k}\dot V_\ep\rp\partial_t w\right.\\&\left.\qquad\quad-V_\ep e^{-2t}\Delta_z w\right].
\end{split}
\end{equation*}
Thus, using that $V_\ep$ is an exact solution to \eqref{equation-v} and formula \eqref{calculation-linearized2} for the linearization, we obtain that
\begin{equation*}
   L_\ep[w]=a^{\epsilon}_{0} w + a^{\epsilon}_{1} \partial_t w +a^{\epsilon}_{2}\partial_{tt} w+a^{\epsilon}_{3}\Delta_\theta w+a^{\epsilon}_{4} e^{-2t}\Delta_z w
\end{equation*}
for
\begin{equation}\label{a's}\begin{split}
&a^{\epsilon}_{0}=(k-q) c V_\ep^{q-1}-\tfrac{n-k}{n-2k}S_1V_\ep^{-1} \dot V_\ep^2 +\tfrac{k}{n-2k}(N-1)S_2V_\ep^{-1}\dot V_\ep^2\\&\quad+ \tfrac{k}{n-2k} p S_3 V_\ep^{-1}\dot V_\ep^2 -\tfrac{n-2k}{4k} S_1 V_\ep+(N-1)\tfrac{n-2k}{4k} S_2V_\ep\\&\quad-p\tfrac{n-2k}{4k}S_3V_\ep,\\
&a^{\epsilon}_{1}= \tfrac{2(n-k)}{n-2k}\dot V_\ep S_1-\tfrac{2k}{n-2k}(N-1)\dot V_\ep S_2+p\lp -V_\ep-\tfrac{2k}{n-2k}\dot V_\ep\rp S_3,\\
&a^{\epsilon}_{2}=-V_\ep S_1,\\
&a^{\epsilon}_{3}=-V_\ep S_2,\\
&a^{\epsilon}_{4}=- V_\ep S_3.
\end{split}\end{equation}
This completes the proof of the Lemma.
\end{proof}

From now on we assume $k=2$ and fix $0<p<\PP_2$.  For the calculation of the inditial roots, it will be more convenient to work with the renormalized operator 
\begin{equation}\label{L'}
L'_\ep:=V_\ep^{-3} L_\ep.
\end{equation}
 Observe that this does not change the behavior as $t\to+\infty$ since, from  \eqref{behavior-zero}, we have that $V_\ep \to v_\infty$, a constant.

\subsection{Indicial roots as $t\to \infty$}\label{subsection:indicial-roots0}

Here we study limiting asymptotics of $L'_\ep$ as  $t\to +\infty$ (that is, as $r\to 0$). Remark that  both $L_\ep$  and $L'_\ep$ have constant coefficients in the limit. Thus we can define
\begin{equation*}
b_\ell=v_\infty^{-3}\lim_{t\to +\infty} a^\ep_\ell(t),\quad \ell=0,1,2,3,4.
\end{equation*}
We will give a precise formula  for these coefficients below (in particular, they do not depend on $\ep$).

 Using \eqref{B-radial2} we can show that the matrix $B_0(t)$ converges to $B_\infty$ for
\begin{equation*}\begin{split}
(B_\infty)_t^t & =    -\tfrac{n-2k}{4k} (v_\infty)^2,  \\
(B_\infty)_j^i & =  \tfrac{n-2k}{4k}(v_\infty)^2\delta_i^j ,\\
 (B_\infty)_\b^\a & =   -\tfrac{n-2k}{4k}(v_\infty)^2 \delta_\b^\a.
\end{split}\end{equation*}
Then from  \eqref{S} we have
\begin{equation*}
\begin{split}
&S_1^\infty := [T^1(B_\infty)]_t^t=\tfrac{n-4}{8} v_\infty^2(n-2p-1),\\
&S_2^\infty := [T^1(B_\infty)]_j^i=\tfrac{n-4}{8} v_\infty^2(n-2p-3),\\
&S_3^\infty := [T^1(B_\infty)]_\beta^\alpha=\tfrac{n-4}{8} v_\infty^2(n-2p-1),\\
\end{split}
\end{equation*}
which yields
\begin{equation*}\begin{split}
&b_0=\left[(2-q) c (v_\infty)^{q-4}-\lp\tfrac{n-4}{8}\rp^2 (n-2p-1) \right.\\
&\qquad+\left.(N-1)\lp\tfrac{n-4}{8}\rp^2 (n-2p-3) -p\lp\tfrac{n-4}{8}\rp^2 (n-2p-1)\right],\\
&b_1=-p\tfrac{n-4}{8}(n-2p-1),\\
&b_2=-\tfrac{n-4}{8}(n-2p-1),\\
&b_3=-\tfrac{n-4}{8}(n-2p-3),\\
&b_4=-\tfrac{n-4}{8}(n-2p-1).
\end{split}\end{equation*}
Recalling \eqref{v-infty}, we simplify $b_0$ to
\begin{equation*}
\begin{split}
b_0&=\lp\tfrac{n-4}{8}\rp^2\left[(2-q) c_{n,p,2} - (p+1)(n-2p-1) +(N-1) (n-2p-3)\right]\\
&=-\lp\tfrac{n-4}{8}\rp(4p^2+8p-4np-5n+4+n^2).
\end{split}
\end{equation*}

The behavior of $L'_\ep$ when $t\to +\infty$ (or $r\to 0)$ is given by the normal operator
\begin{equation}\label{Linftyyy}
   L^{(0)}[w]=b_0 w + b_1\partial_t w +b_2\partial_{tt} w+b_3\Delta_\theta w+b_4 e^{-2t}\Delta_z w,
\end{equation}
and the indicial operator
\begin{equation}\label{mathcalLinfty}
   L^{(0)}_\natural [w]=b_0 w + b_1\partial_t w +b_2\partial_{tt} w+b_3\Delta_\theta w.
\end{equation}

We consider now the spherical harmonic decomposition of $\mathbb S^{N-1}$ and project the operators \eqref{Linftyyy} and \eqref{mathcalLinfty} over each eigenspace. For this, we set
\begin{equation*}
    L_{j}^{(0)}[w]=b_0 w + b_1\partial_t w +b_2\partial_{tt} w-b_3\lambda_j w+b_4 e^{-2t}\Delta_z w
\end{equation*}
and
\begin{equation*}
 L_{\natural, j}^{(0)}[w]=b_0 w + b_1\partial_t w +b_2\partial_{tt} w-b_3\lambda_j w.
\end{equation*}
We look for solutions of $ L_{\natural,j}^{(0)}[w]=0$ of the form $w(t)=e^{-\gamma t}$. Such $\gamma$ must satisfy the quadratic equation
\begin{equation*}
f_j(\gamma):=b_0-b_3\lambda_j -b_1 \gamma+b_2\gamma^2=0.
\end{equation*}
Observe that $b_1,b_2,b_3<0$.
An elementary analysis of these parabolas yields:

\begin{lemma}\label{lemma:indicial-origin} For each
 $j=0,1,\ldots$, there exist two indicial roots $\gamma_j^\pm$, given as solutions of $f_j(\gamma)=0$. These satisfy:
\begin{itemize}
\item $\gamma_0^\pm$ can be real or complex. If they are real and different, then
\begin{equation}\label{gamma0}0<\gamma_0^-<\tfrac{p}{2}<\gamma_0^+,\end{equation}
otherwise it holds
\begin{equation}\label{gamma00}\re \gamma_0^\pm=\tfrac{p}{2}>0.\end{equation}
\item For $j\geq 1$, $\gamma_{j}^\pm$ are real numbers. In addition, we have the monotonicity
\begin{equation}\label{indicial-zero2}
\ldots\leq\gamma_2^-\leq \gamma_1^-< \re\gamma_0^-\leq\tfrac{p}{2}\leq \re\gamma_0^+<\gamma_1^+\leq \gamma_2^+\leq\ldots
\end{equation}
\item It holds 
\begin{equation}\label{gamma1}\gamma_1^-=-1.
\end{equation}
\end{itemize}
\end{lemma}

\begin{proof}
The graph of the second order polynomial $f_j(\gamma)$ is given by a downward-open parabola, with vertex located at  $\gamma=\frac{b_1}{2b_2}=\frac{p}{2}$.

Let us look first at the case  $j=0$. First remark that $f_0(0)<0$. Second, we may or may not have real roots for $f_0(\gamma)=0$, however, in any case, either \eqref{gamma0} or \eqref{gamma00} holds  under the hypothesis $0<p<\PP_2$ (and this is sharp). 

To show \eqref{gamma1},  just notice that the polynomial $f_1(\gamma)$ has roots exactly at $\gamma=-1$ and $\gamma=p+1$,

Finally, to prove \eqref{indicial-zero2} one sees that  $f_j(\gamma)$ is non-decreasing in $j$, that is, the parabolas shift vertically as $j$ grows. Thus if the roots are real for $j=1$, they will be also for $j\geq 2$, and have the required monotonicity in $j$.

\end{proof}

Observe that it is possible to give a self-adjoint  version of $L_\ep$, denoted by $\widetilde L_\ep$, of the form
\begin{equation}\label{L-tilde-epsilon}
   \widetilde L_\ep[\widetilde{w}]=\tilde a_0^\ep \widetilde w + \tilde a_2^\ep\partial_{tt} \widetilde w+\tilde a_3^\ep\Delta_\theta \widetilde w+\tilde a_4^\ep e^{-2t}\Delta_z \widetilde w,
\end{equation}
for some coefficients $\tilde a_\ell^\ep$ which can be calculated from the original $a_\ell^\ep$.  We will not need its precise expression, only the limit operator as $t\to \infty$. More precisely, the \emph{conjugate} operator to $L^{(0)}$ is defined by
\begin{equation}\label{tilde-L0}
\widetilde L^{(0)}[\widetilde w]=e^{-\frac{p}{2} t}L^{(+\infty)}[ e^{\frac{p}{2} t}\widetilde w]=\tilde b_0 \widetilde w
+b_2\partial_{tt}\widetilde w+b_3\Delta_\theta \widetilde w+b_4e^{-2t}\Delta_z \widetilde w,
\end{equation}
where we have defined
\begin{equation*}
\tilde b_0=b_0-\frac{b_1^2}{4b_2}.
\end{equation*}

\begin{remark}\label{indicial-delta}
The advantage of the conjugate operator $\widetilde{L_\ep}$ is that it is self-adjoint and the indicial roots as $t\to +\infty$ (which will be  denoted by $\delta_j^{\pm})$ have the same structure as those in Lemma \ref{lemma:indicial-origin}, but are centered at the origin instead of $\frac{p}{2}$, fact that makes the Hilbert space analysis more clear. 

\end{remark}

\subsection{Indicial roots as $t\to -\infty$}\label{subsection:indicial-rootsinfty}

We now calculate the indicial roots of $L_\ep$ as   $t\to -\infty$ (that is, $r\to\infty$). Again, we work with the renormalized operator $L'_\ep=V_\ep^{-3}L_\ep$. We will see that the following limits exist:
\begin{equation*}
d_\ell=\lim_{t\to -\infty} V_\ep^{-3}(t)a^\ep_\ell(t),\quad \ell=0,1,2,3,4.
\end{equation*}
Since $V_\ep$ behaves as $\ep^{\alpha_0} e^{\alpha_0 t}$,  we have that $B_0(t)$ converges to $B_{-\infty}$ as $t\to -\infty$, where
\begin{equation*}\begin{split}
&(B_{-\infty})_t^t  =V_\ep^2 \left( -\tfrac{n-2k}{4k}+\tfrac{k}{n-2k}\alpha_0^2\right),  \\
&(B_{-\infty})_j^i  =V_\ep^2\left(\tfrac{n-2k}{4k}-\tfrac{k}{n-2k}\alpha_0^2\right)\delta_i^j ,\\
& (B_{-\infty})_\b^\a  =V_\ep^2   \left(-\tfrac{n-2k}{4k}\right)\left(1+\tfrac{2k}{n-2k}\alpha_0\right)^2\delta_\b^\a,
\end{split}\end{equation*}
which implies
\begin{equation*}
\begin{split}
&S_1^{-\infty}= V_\ep^2\left(\tfrac{n-4}{8}(n-2p-1)-\alpha_0p+\alpha_0^2 2\tfrac{-n+1}{n-4}\right)=:V_\ep^2s_1,\\
&S_2^{-\infty}=V_\ep^2\left(\tfrac{n-4}{8}(n-2p-3)-\alpha_0p+\alpha_0^2 2\tfrac{-n+3}{n-4}\right)=:V_\ep^2s_2,\\
&S_3^{-\infty}=V_\ep^2\left(\tfrac{n-4}{8}(n-2p-1)-\alpha_0(p-1)+\alpha_0^2 2\tfrac{-n+3}{n-4}\right)=:V_\ep^2s_3.
\end{split}
\end{equation*}
Then, the expressions in \eqref{a's} yield
\begin{equation}\label{a'ss}\begin{split}
&d_0=\left\{-\tfrac{n-2}{n-4}s_1\alpha_0^2 +\tfrac{2}{n-4}(n-p-1)s_2\alpha_0^2\right.\\&\left.\quad+ \tfrac{2}{n-4} p s_3 \alpha_0^2 -\tfrac{n-4}{8} s_1 +(n-p-1)\tfrac{n-4}{8} s_2-p\tfrac{n-4}{8}s_3\right\},\\
&d_1= \left\{\tfrac{2(n-2)}{n-4}\alpha_0  s_1-\tfrac{4}{n-4}(n-p-1)\alpha_0 s_2\right.\\&\quad+\left.p\lp -1-\tfrac{4}{n-4}\alpha_0\rp s_3\right\},\\
&d_2=-\left(\tfrac{n-4}{8}(n-2p-1)-\alpha_0p+\alpha_0^2 2\tfrac{-n+1}{n-4}\right),\\
&d_3=-\left(\tfrac{n-4}{8}(n-2p-3)-\alpha_0p+\alpha_0^2 2\tfrac{-n+3}{n-4}\right),\\
&d_4=- \left(\tfrac{n-4}{8}(n-2p-1)-\alpha_0(p-1)+\alpha_0^2 2\tfrac{-n+3}{n-4}\right).
\end{split}\end{equation}
Indicial roots are calculated as the roots of the quadratic polynomial
\begin{equation*}
d_0-d_3\lambda_j -d_1 \vartheta+d_2\vartheta^2=0, \quad j=0,1,\ldots
\end{equation*}
Even though we could proceed as in Subsection \ref{subsection:indicial-roots0} to analyse these parabolas, the expressions of the coefficients from \eqref{a'ss} are so involved that we prefer to take a different path, and we refer the reader to Section \ref{subsction:model-linearization} below. In any case, for each $j=0,1,\ldots$, there exist two indicial roots $\vartheta_j^\pm$, with similar monotonicity.

\section{The linearized operator}\label{section:linearized}

Now we go back to the general setting $M\setminus \Lambda$ with the metric given in \eqref{background-metric-u}, where  $\bar u_\ep$ is the approximate solution from Section \ref{subsection:approximate-sol} above. We let  $\mathbb L_\ep:=\mathbb L(\bar u_\ep,g_{\mathbb R^n})$ be the linearized operator around $\bar u_\ep$,  as defined in \eqref{linope}, and take a conformal perturbation of the metric $\bar g_\ep$, i.e, we set for $s\in\mathbb R$,
$$s\mapsto g_s:=(\bar u_\ep+s\varphi)^{\frac{8}{n-4}}g_{M}.$$
Let $\mathbb B_s$ be the symmetric $(1,1)$-tensor given by
\begin{equation}
\label{Bs0}\mathbb B_s:=\tfrac{n-4}{4} (\bar u_\ep+s\varphi)^{\frac{2n}{n-4}} g_s^{-1} A_{g_s}.
\end{equation}
We have
\begin{equation}\label{linearized-general}
\mathbb L_\ep[\varphi]:=\left.\frac{d}{ds}\right|_{s=0}\sigma_2(\mathbb B_s)-cq\bar u_\ep^{q-1}\varphi.
\end{equation}

\subsection{The model linearization}\label{subsction:model-linearization}

We first consider $\mathcal L_\ep$, the linearized operator around the model solution $U_\ep$ in $\mathbb R^n\setminus \mathbb R^p$, using the notation from Section \ref{subsection:cyl-coordinates}. We will translate the results from Section \ref{section:explicit} regarding operator $L_\ep$ in cylindrical coordinates back to the variable $r$. For this, recall the relation \eqref{rewrite}:

Then the arguments of the previous section immediately yield:

\begin{pro}\label{prop:expression}
The operator $\mathcal L_\ep$ has the  following expression:
\begin{equation}\label{mathcalL}
\mathcal L_\ep[\varphi]=\mathcal A_0^\ep(r) \varphi +  \frac{\mathcal A_1^\ep(r)}{r}\partial_r  \varphi +\mathcal A_2^\ep(r)\partial_{rr} \varphi+\frac{\mathcal A_3^\ep(r)}{r^2}\Delta_\theta \varphi+\mathcal A_4^\ep(r) \Delta_z \varphi,
\end{equation}
 for some coefficients $\mathcal A^\ep_\ell(r)$, $\ell=0,1,2,3,4$,  satisfying
$$\mathcal A^\ep_2,\mathcal A^\ep_3,\mathcal A^\ep_4>0.$$
\end{pro}

\begin{remark}
These coefficients are smooth except at $r=0$ and $r=\infty$. Precise asymptotics at those points  will be given below. 
\end{remark}


Now we take the limits as $r\to 0$ and $r\to \infty$. As we showed in the previous subsection, it is more convenient to work with a normalized linearization \eqref{L'}, denoted with a `prime'. Following this convention, we set
\begin{equation}\label{L-prima}
\mathcal L'_\ep=U_\ep^{-3}\mathcal L_\ep.
\end{equation}

From the calculations in Subsection \ref{subsection:indicial-roots0}, as $r\to 0$ the operator $\mathcal L_\ep'=r^{\frac{n-4}{2}+\frac{n}{4}}\mathcal L_\ep$ converges to the constant coefficient operator
\begin{equation}\label{L0}
\mathcal L^{(0)}[\varphi]:=\beta_0^{(0)} \varphi +  \frac{\beta_1^{(0)}}{r}\partial_r  \varphi +\beta_2^{(0)}\partial_{rr} \varphi+\frac{\beta_3^{(0)}}{r^2}\Delta_\theta \varphi+\beta_4^{(0)} \Delta_z \varphi,
\end{equation}
for some constants $\beta_\ell^{(0)}$, $\ell=0,1,2,3,4$.

Moreover, in Subsection \ref{subsection:indicial-rootsinfty} we checked that, as $r\to\infty$, the operator $\mathcal L'_\ep$ converges to
\begin{equation}\label{Linfty}
\mathcal L^{(\infty)}[\varphi]:=\beta_0^{(\infty)} \varphi +  \frac{\beta_1^{(\infty)}}{r}\partial_r  \varphi +\beta_2^{(\infty)}\partial_{rr} \varphi+\frac{\beta_3^{(\infty)}}{r^2}\Delta_\theta \varphi+\beta_4^{(\infty)} \Delta_z \varphi,
\end{equation}
for some constant coefficients  $\beta^{(\infty)}_\ell$, $\ell=0,1,2,3,4$.

The fact that the limit operators $\mathcal L^{(0)}$ and $\mathcal L^{(\infty)}$ have constant coefficients (independent of $\ep$) allow to  characterize all the indicial roots for $\mathcal L_\ep$, as follows:

\begin{pro}\label{prop:indicial}
For each $j=0,1,\ldots$, there exist two indicial roots $\chi_{j,\pm}^{(0)}$ for the operator $\mathcal L_\ep$ as $r\to 0$. These satisfy:
\begin{itemize}
\item $\chi_{0,\pm}^{(0)}$ can be real or complex. In the former case,
$$-\tfrac{n-4}{4}<\chi^{(0)}_{0,-}<\tfrac{p}{2}-\tfrac{n-4}{4}<\chi^{(0)}_{0,+},$$
 while in the latter, $\re (\chi^{(0)}_{0,\pm})=\frac{p}{2}-\frac{n-4}{4}$.
\item For $j\geq 1$, $\chi_{j,\pm}^{(0)}$ are real numbers. In addition, we have the monotonicity
\begin{equation}\label{indicial-zero}
\begin{split}
\ldots\leq\chi_{2,-}^{(0)}\leq \chi_{1,-}^{(0)}< \re(\chi_{0,-}^{(0)})&\leq\tfrac{p}{2}-\tfrac{n-4}{4}\\
&\leq \re(\chi_{0,+}^{(0)})<\chi_{1,+}^{(0)}\leq \chi_{2,+}^{(0)}\leq\ldots
\end{split}
\end{equation}
\item It holds $\chi_{1,-}^{(0)}=-1-\frac{n-4}{4}$.
\end{itemize}

As $r\to\infty$, the picture is similar and, for each $j=0,1,\ldots$, there exist two indicial roots $\chi_{j,\pm}^{(\infty)}$ for  $\mathcal L_\ep$ which satisfy:
\begin{itemize}
\item For all $j$, $\chi_{j,\pm}^{(\infty)}$ are real numbers. In addition, we have the monotonicity
\begin{equation}\label{indicial-infinity0}
\begin{split}
\ldots\leq\chi_{2,-}^{(\infty)}\leq \chi_{1,-}^{(\infty)}< \chi_{0,-}^{(\infty)}&\leq-\tfrac{n-4}{4}+\tfrac{p(n-3)}{2(n-1)}\\
&\leq \chi_{0,+}^{(\infty)}<\chi_{1,+}^{(\infty)}\leq \chi_{2,+}^{(\infty)}\leq\ldots
\end{split}
\end{equation}
\item It holds
\begin{equation}\label{indicial-infinity-first}
-\tfrac{n-4}{4}+\tfrac{p}{2}>\chi_{0,-}^{(\infty)}=-\tfrac{n-4}{4}+\tfrac{p(n-3)}{2(n-1)}-\tfrac{\sqrt{4p+5p^2-5pn+pn^2-p^2n}}{2(n-1)}.
\end{equation}
\item For $j=1$,
\begin{equation}\label{chi1-}
\chi_{1,-}^{(\infty)}=-1-\alpha_0-\tfrac{n-4}{4}
\end{equation}
and
\begin{equation}
\label{chi1+}\chi_{1,+}^{(\infty)}>0.
\end{equation}
\end{itemize}
\end{pro}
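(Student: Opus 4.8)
The plan is to compute the indicial roots of the limit operators $\mathcal L^{(0)}$ and $\mathcal L^{(\infty)}$ explicitly by separation of variables, and then invoke the general theory of elliptic edge operators (Mazzeo \cite{Mazzeo:edge}) to conclude that these are precisely the indicial roots of $\mathcal L_\ep$, independent of $\ep$ (since the coefficients $\beta^{(0)}_\ell,\beta^{(\infty)}_\ell$ are). For the behavior as $r\to 0$, I would decompose $\varphi$ into spherical harmonics on $\mathbb S^{N-1}$ and Fourier modes in $z\in\mathbb R^p$; a function of the form $\varphi=r^{\chi+\frac{n-2k}{2k}}e_j(\theta)$ (with the shift coming from \eqref{notation-uv}) is in the kernel of $\mathcal L^{(0)}$, modulo the $z$-Laplacian which does not contribute to the indicial equation, precisely when $\chi$ solves a quadratic indicial equation
\begin{equation*}
\beta_2^{(0)}\chi(\chi-1)+\beta_1^{(0)}\chi - \beta_3^{(0)}\lambda_j + (\text{shift terms}) = 0,
\end{equation*}
whose two roots are $\chi_{j,\pm}^{(0)}$. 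The monotonicity \eqref{indicial-zero} then follows from the fact that $\lambda_j$ is nondecreasing in $j$ together with the sign of $\beta_3^{(0)}>0$ (which is $\mathcal A_3^\ep>0$ in the limit): increasing $\lambda_j$ pushes the larger root up and the smaller root down symmetrically about the vertex $\frac p2-\frac{n-4}{4}$, which is the value of $\re\chi_{0,\pm}^{(0)}$ in the degenerate/complex case $j=0$, $\lambda_0=0$.

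Next I would identify the explicit values. The claim $\chi_{1,-}^{(0)}=-1-\frac{n-4}{4}$ should come from evaluating the indicial equation at $j=1$, where $\lambda_1=N-1$; the appearance of the clean value $-1-\frac{n-4}{4}$ reflects that $\varphi=\partial_r U_\ep$-type Jacobi fields (translations of the singular set) sit at this root, and this can be checked directly from the formula for $U_1$ in Proposition \ref{ODE-study}. For $r\to\infty$ I would repeat the separation of variables for $\mathcal L^{(\infty)}$, using the coefficients $\beta^{(\infty)}_\ell$; here all roots are real because the relevant discriminant is positive (this is where the hypothesis $0<p<\PP_2$ enters, guaranteeing the model metric $g_{U_1}$ stays in $\Gamma_2^+$ and the linearization is genuinely elliptic with the right sign structure, cf.\ the Newton-tensor positivity noted after \eqref{calculation-linearized2}). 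The explicit formula \eqref{indicial-infinity-first} is then just the smaller root of the $j=0$ quadratic, and \eqref{chi1-}, \eqref{chi1+} come from the $j=1$ quadratic, with $\chi_{1,-}^{(\infty)}=-1-\alpha_0-\frac{n-4}{4}$ reflecting the $r^{-\alpha_0-(n-4)/4}$ decay rate of $U_1$ at infinity from Proposition \ref{ODE-study} (again a Jacobi-field/scaling consideration), while $\chi_{1,+}^{(\infty)}>0$ follows from a crude estimate on the larger root.

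The main obstacle I anticipate is twofold. First, one must actually extract the limit coefficients $\beta^{(0)}_\ell$ and $\beta^{(\infty)}_\ell$ from the definition of $\mathcal L_\ep$ via the Newton-tensor linearization formula \eqref{calculation-linearized2} applied to the diagonal tensor $B_{g_{v_\ep}}$ with entries $\kappa_1,\kappa_2,\kappa_3$ from \eqref{B-radial}: this is a genuine computation using the asymptotics $v_1(t)\to v_\infty$ (constant) as $t\to+\infty$, i.e.\ $r\to 0$, and $v_1(t)\sim$ exponential as $t\to-\infty$, i.e.\ $r\to\infty$. Because $\sigma_2$ is quadratic in the eigenvalues, $T^1(B)=\sigma_1 I - B$ is itself linear in the $\kappa_i$, so the linearization is tractable but the bookkeeping of which $\kappa_i$-derivative hits which tensor slot is delicate — this is exactly the ``heavy numerology'' the authors warn about and the reason for restricting to $k=2$. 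Second, one must verify the sign claims $\beta_2^{(0)},\beta_3^{(0)},\beta_4^{(0)}>0$ (and their $\infty$ counterparts), which reduce to positivity of the Newton tensor $T^1(B_{g_{U_1}})$ — this is where membership in $\Gamma_2^+$ from Proposition \ref{ODE-study} is used, since for metrics in the positive cone the Newton tensors $T^0,\dots,T^{k-1}$ are positive definite. Once these coefficients and their signs are in hand, the indicial-root analysis is the elementary quadratic discussion sketched above, and the monotonicity and explicit formulas drop out; I would relegate the long computations to the ``technical'' Section \ref{section:explicit} as the authors indicate.
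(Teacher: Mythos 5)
Your proposal follows essentially the same strategy as the paper: compute the limit coefficients of the linearized operator, separate into spherical harmonics and Fourier modes in $z$, read off a quadratic indicial equation for each $j$, and supplement the elementary parabola analysis with Jacobi-field (dilation and rotation) arguments to pin down the $j=0,1$ roots. The paper carries this out in the $v$-notation of cylindrical coordinates (Section~\ref{section:explicit}), obtaining the quadratic $f_j(\alpha)=b_0-b_3\lambda_j-b_1\alpha+b_2\alpha^2$ with vertex at $\alpha=p/2$, which corresponds after the shift $-\tfrac{n-4}{4}$ to the vertex $\tfrac p2-\tfrac{n-4}{4}$ you identify in $u$-notation; for $r\to\infty$ it also switches to the invariance/Jacobi-field approach precisely because, as you anticipate, the coefficients $d_\ell$ are ``so involved that we prefer to take a different path.'' So the decomposition, the quadratic, the monotonicity argument, and the Newton-tensor positivity input all match.

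One point in your sketch is logically off, though. You argue that the reality of $\chi_{j,\pm}^{(\infty)}$ follows because $0<p<\PP_2$ keeps the model metric in $\Gamma_2^+$ and hence the linearization is ``genuinely elliptic with the right sign structure.'' But ellipticity cannot by itself force the indicial discriminant to be nonnegative --- indeed in the $r\to 0$ regime the operator is equally elliptic yet the $j=0$ roots $\chi_{0,\pm}^{(0)}$ may be complex, as the Proposition itself states. What the paper actually does is observe that $\chi_{0,\pm}^{(\infty)}=-\alpha_0^\pm-\tfrac{n-4}{4}$ with the explicit formula \eqref{alpha0pm} from \cite{Gonzalez-Mazzieri}, and that the quantity under the square root there, $4p+5p^2-5pn+pn^2-p^2n$, is positive on the whole range $0<p<\PP_2$ by direct inspection; reality for all $j\ge 1$ then follows from the monotonicity of the roots in $j$. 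So you should replace the ellipticity appeal by this concrete positivity check (or an equivalent direct evaluation of the $j=0$ discriminant of the $d_\ell$-quadratic). The rest of your outline, including identifying the $j=1$ roots with the scaling/translation Jacobi fields of $U_1$ and the role of $T^1(B_{g_{U_1}})>0$ for the sign of the second-order coefficients, is sound and matches the paper.
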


\begin{proof}
The indicial roots at the origin were calculated in  Subsection \ref{subsection:indicial-roots0}, taking into account that the relation \eqref{rewrite} introduces a shift of $\frac{n-4}{4}$ in the indicial roots between $L_\ep$ and $\mathcal L_\ep$.

We started studying the behavior at infinity  in  Subsection \ref{subsection:indicial-rootsinfty}. Full details of the proof will be  postponed until Section \ref{section:injectivity}. However, let us give here the main ideas:

We start with  the $j=0$ mode, that corresponds to radially symmetric solutions. The fundamental observation is that knowing  an exact solution for the non-linear ODE \eqref{ODE-sigma2}, one can produce radially symmetric solutions of its linearized equation using dilation invariance, as explained in  Remark \ref{remark:dilation-inv}. Thus we know precisely the behavior at infinity of a particular solution $\varphi_\sharp$, which agrees with the power $r^{-\alpha_0^\pm}$. The precise value of $\alpha_0^\pm$ was calculated in the paper \cite{Gonzalez-Mazzieri} and it is given by
\begin{equation}\label{alpha0pm}
\alpha_0^{\pm}=\frac{n-4}{4}-\frac{p(n-3)}{2(n-1)}\pm\frac{\sqrt{4p+5p^2-5pn+pn^2-p^2n}}{2(n-1)},
\end{equation}
(we denote $\alpha_0=\alpha_0^-$). From here we conclude that  $-\alpha_0^{\pm}$ are the two indicial roots for $j=0$.

Note also  that the expression inside the square root in formula \eqref{alpha0pm} is always positive in our range of $p$. This implies that, for all $j$, all the indicial roots  are real numbers.  Summarizing this discussion, we have shown \eqref{indicial-infinity0}.

We can also give explicit formulas for $j=1$. Using rotational invariance from Remark \ref{remark:rotation-invariance} one can find an explicit solution in the kernel for $j=1$, and this yields \eqref{chi1-}. Finally, to show \eqref{chi1+}, one just needs to check that
\begin{equation*}
-\tfrac{n-4}{4}+p\tfrac{n-3}{n-1}+1+\alpha_0>0,
\end{equation*}
which follows from simple algebra for $n\geq 5$.
\end{proof}

Next, we give a ``divergence-form" version for $\mathcal L_\ep$ with the introduction of an  integrating factor. Indeed, let
\begin{equation*}
\mathcal H^\ep_1(r)=\exp\int^r \frac{\mathcal A_1^\ep(s)}{s\mathcal A_2^\ep(s)}\,ds \quad\text{and}\quad (\mathcal H^\ep)^{-1}(r)=\frac{\mathcal A_2^\ep(r)}{\mathcal H^\ep_1(r)}.
\end{equation*}
Note that both functions are strictly positive. It holds
\begin{equation}\label{HH}
\mathcal L_{\ep}\varphi=(\mathcal H^\ep)^{-1}(r)\partial_r\left\{\mathcal H^\ep_1(r)\partial_r \varphi\right\}+\mathcal A_0^\ep(r) \varphi +\frac{\mathcal A_3^\ep(r)}{r^2}\Delta_\theta\varphi+\mathcal A_4^\ep(r) \Delta_z \varphi,
\end{equation}
which shows that a natural space to work is $L^2(\mathbb R^n\setminus\mathbb R^p)$ with the (weighted) scalar product
\begin{equation}\label{space-H}
\langle \varphi_1,\varphi_2  \rangle=\int_0^\infty \int_{\mathbb S^{N-1}}\int_{\mathbb R^p}\mathcal H^\ep(r) \varphi_1\varphi_2 \,dz d\theta dr,
\end{equation}
for which $\mathcal L_\ep$ is self-adjoint. 

A possibility to avoid using this integrating factor is to work with the conjugated operator $\widetilde L_\ep$ from \eqref{L-tilde-epsilon}, and this will be our approach for the $L^2$ theory.

\subsection{A closer look at $\mathbb L_\ep$}\label{subsection:closer-look}

Let us go back to the general case of a general singular set $\Lambda$ and consider  $\mathbb L_\ep=\mathbb L(\bar u_\ep,g_M)$,  the linearized operator around the approximate solution $\bar u_\ep$.   

Motivated by \eqref{L-prima}, it is preferable to use the normalization
\begin{equation}\label{normalization-L}
\mathbb L'_\ep=\bar u_\ep^{-3}r^{2}\mathbb L_\ep,
\end{equation}
where $r$ is extended to one away from the singular set.

In a tubular neighborhood  $\mathcal T_\varrho(\Lambda)$ around the singular set, the background metric in Fermi coordinates is written as \eqref{metric-Fermi}, which can be compared to the model \eqref{metric-gE}. Thus we can prove:

\begin{pro}\label{prop:compare-operators}
For functions supported in $\mathcal T_\varrho(\Lambda)$ that do not depend on the $z$ variable, we have that
\begin{equation}\label{compare-operators}\mathbb L'_\ep=\mathcal L'_\ep+\mathcal D,
\end{equation}
where $\mathcal D$ is a second order differential operator (at most) that satisfies, for functions of the form $\varphi=O(r^a)$ as $r\to 0$,
\begin{equation*}
\mathcal D \varphi=O(r^{a-2+\beta}),
\end{equation*}
for some $\beta>0$.
\end{pro}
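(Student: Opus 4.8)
The plan is to compare the two linearized operators by carefully tracking how the metric $g_{\mathbb R^n}$ in Fermi coordinates \eqref{metric-Fermi} differs from the model metric $g_E$ in \eqref{metric-gE}, and to propagate these differences through the construction of $\mathbb L_\ep$. The starting point is the explicit structure of $\mathcal L_\ep$ from Proposition \ref{prop:expression} together with the divergence-form expression \eqref{HH}. Since we restrict to functions $\varphi$ supported in $\mathcal T_\rho(\Lambda)$ that are independent of $z$, the term $\mathcal A_4^\ep(r)\Delta_z\varphi$ in \eqref{mathcalL} drops out and we only need to understand the radial and spherical parts of the operator.

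First I would recall that, by the computations in the proofs of Proposition \ref{prop-gluing-Lambda} and Proposition \ref{prop:compare-metric}, the tensor $B_{g_{\mathbb R^n}}$ differs from $B_{g_E}$ by terms of order $O(r)$ relative to the leading behaviour as $r\to 0$; this is exactly what \eqref{metric-Fermi}, the induced expansion of $(g_{\mathbb R^n})^{-1}$, the Christoffel symbols, and the Hessian formula \eqref{Hessian} encode. Next, linearizing: from \eqref{linearized-general} and the Newton-tensor formula \eqref{calculation-linearized2}, $\mathbb L_\ep[\varphi]$ is built from $\bar u_\ep$, the tensor $\mathbb B_0$, its Newton tensor $T^{k-1}(\mathbb B_0)$, and the first variation $\tfrac{d}{ds}\big|_{s=0}\mathbb B_s$, which involves $D^2\varphi$, $\nabla\bar u_\ep\otimes\nabla\varphi$ and lower-order terms, all contracted with metric coefficients. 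Replacing $g_E$ by $g_{\mathbb R^n}$ perturbs each of these ingredients: the metric coefficients pick up $O(r)$ corrections, the Christoffel symbols entering $D^2\varphi$ pick up corrections of the orders listed in Proposition \ref{prop-gluing-Lambda}, and $\bar u_\ep\asymp U_1$ together with \eqref{Hessian} shows the Hessian of $\bar u_\ep$ is perturbed at order $o(1)$. Collecting all of these, the difference $\mathcal D:=\mathbb L_\ep-\mathcal L_\ep$ is a differential operator of order at most two whose coefficients, compared with those of $\mathcal L_\ep$ written in the form \eqref{mathcalL}, carry an extra factor that is $O(r^\sigma)$ as $r\to 0$ for some $\sigma>0$ (one can take $\sigma$ to be the gain coming from the $O(r)$ corrections in \eqref{metric-Fermi}, possibly reduced by the $z$-geometry contribution, but still strictly positive). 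Applying $\mathcal D$ to a function $\varphi=O(r^a)$ — so $\partial_r\varphi=O(r^{a-1})$ and $\partial_{rr}\varphi,\ r^{-2}\Delta_\theta\varphi=O(r^{a-2})$ — then yields $\mathcal D\varphi=O(r^{a-2+\sigma})$, which is the claim.

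The main obstacle I anticipate is bookkeeping rather than conceptual: one has to verify that \emph{every} term produced by the perturbation genuinely gains a positive power of $r$, and in particular that no cancellation in the unperturbed operator $\mathcal L_\ep$ is destroyed in a way that produces a term of the borderline order $O(r^{a-2})$ with no gain. This requires being careful with the spherical block (where the $r^2 g_\theta+O(r^4)$ correction interacts with $\Delta_\theta$) and with the mixed $O(r)$ off-diagonal entries of \eqref{metric-Fermi}, which could a priori contribute first-order terms; one checks that, after contraction with the Newton tensor $T^{k-1}(\mathbb B_0)$ (whose own perturbation is controlled since $\mathbb B_0$ is), these enter only with the claimed gain. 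I would organize this as: (i) expand $\mathbb B_0^{(g_{\mathbb R^n})}-\mathbb B_0^{(g_E)}=O(r\cdot\text{leading})$; (ii) expand $T^{k-1}$ of the perturbed tensor similarly; (iii) expand the first-variation term, using \eqref{Hessian} for $D^2\varphi$ and $D^2\bar u_\ep$; (iv) assemble and read off the orders, invoking the convention that $\varphi=O(r^a)$ controls derivatives with the shifted order. Since the reduction to $k=2$ is not needed here — only the structural expansions are used — the argument goes through exactly as stated, and one simply points to Propositions \ref{prop-gluing-Lambda} and \ref{prop:compare-metric} for the metric expansions rather than redoing them.
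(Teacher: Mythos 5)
Your proposal is correct and follows essentially the same route as the paper: the paper's own proof is a two-sentence pointer to the Fermi-coordinate expansion \eqref{metric-Fermi} versus the model \eqref{metric-gE} (citing the computations in Proposition \ref{prop-gluing-Lambda} and the precedent in Mazzeo--Pacard), and your write-up is simply a more explicit bookkeeping of the same comparison through $B_{g}$, the Newton tensor, and the first-variation formula. Your observation that the $\Delta_z$ term drops out and that the argument does not require $k=2$ is consistent with the paper's phrasing, and your reading of the order-gain (coefficients of $\mathcal D$ pick up an extra $O(r^\sigma)$, turning $O(r^{a-2})$ into $O(r^{a-2+\sigma})$) matches the intended content.
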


\begin{remark}\label{remark:dependence-z}
To handle the dependence on the variable $z$, we just need to recall that our arguments rely on localization and rescaling near a fixed point $\Lambda$, around which we use normal coordinates.
\end{remark}

\begin{proof}[Proof of Proposition \ref{prop:compare-operators}]
As in \cite[Section 4.2]{Mazzeo-Pacard}, in the neighborhood $\mathcal T_\rho(\Lambda)$, the difference between
$\mathbb L'_\ep$  and $\mathcal L'_\ep$ comes from the extra terms of the metric in $\mathbb R^n\setminus \Lambda$ from \eqref{metric-Fermi} with respect to  the model metric in $\mathbb R^n\setminus\mathbb R^p$ from \eqref{metric-gE} as we did in the proof of Proposition \ref{prop-gluing-Lambda}. Controlling the  error terms in the metric, which are of order $o(1)$  as $r\to 0$, yields the desired result.
\end{proof}

Now, away from the singular set, say in $M\setminus\mathcal T_\varrho$, the normalization in $\mathbb L'_\ep$ cancels the rescaling of the metric \eqref{rescaling-metric} by $\ep^{\alpha_0}$, so that we recover $\mathbb L'_\ep=\mathbb L_\ep(u_*,g_M)$ which is uniformly elliptic and non-degenerate, with constants not depending on   $\epsilon$. We summarize this discussion in the remark below.

\begin{remark}\label{remark:L0} The normalization $\mathbb L'_\ep$ behaves as:
\begin{itemize}
\item Away from the singularity, we have a usual uniformly elliptic, non-degenerate operator, independently of $\epsilon$.

\item Near the singularity set, $\mathbb L'_\ep$ is in the class of elliptic edge operators from \cite{Mazzeo:edge}, and it is modelled after $\mathcal L'_\ep$. In particular, it has constants indicial roots as $r\to 0$ (independent of $\ep$).
\end{itemize}
\end{remark}

In order to simplify the functional analysis for $\mathbb L_\ep$ (or $\mathbb L'_\ep$), a possibility  is to work with the ``self-adjoint" version of $\mathbb L_\ep$, which near the singular set is essentially \eqref{HH} with the scalar product \eqref{space-H}. 

An equivalent approach is to follow the calculations in \eqref{L-tilde-epsilon}, passing to the Fermi variable $t=-\log r$ and introducing the  ``conjugate'' operator $\widetilde{\mathbb L}_\ep$ (and the corresponding $\widetilde{\mathbb L}'_\ep$). This conjugation preserves $\mathbb L_\ep$ away from the singular set but, near $\Lambda$, reduces to a perturbation of  \eqref{tilde-L0}.  This makes the analytic setup simpler (see Remark \ref{indicial-delta}).

Finally, note that  $\widetilde{\mathbb L}_\ep$ is not self-adjoint in general, nevertheless, motivated by the above,  we will assume that it is self-adjoint, since it does not change the analysis below and avoids the use of extra notation. Same comment for $\widetilde{\mathbb L}'_\ep$.

\section{Function spaces}\label{section:function-spaces}

The objective of this section is to set up the functional analytic framework on $M\setminus\Lambda$. As we have discussed, $\mathbb L_\ep$ is  second order linear, elliptic operator, uniformly elliptic away from the singular set $\Lambda$ where it has the structure of an edge operator from \cite{Mazzeo:edge}.

\subsection{Weighted H\"older spaces}

We now define the weighted H\"{o}lder spaces $\mathcal C_{\mu}^{2,\alpha}(M\setminus\Lambda)$, following the notations and definitions from Section 3 in \cite{Mazzeo-Pacard}. Intuitively, these spaces consist of functions which are products of powers of the distance to $\Lambda$ with functions whose H\"{o}lder norms are invariant under homothetic transformations centered at an arbitrary point on $\Lambda$.

Let $u$ be a function in a tubular neighborhood $\mathcal T:=\mathcal T_\rho$ of $\Lambda$ and define
\begin{equation*}
\|u\|_{0,\alpha,0}^{\mathcal{T}}=\sup_{y\in \mathcal{T}}|u|+\sup_{y,y'\in \mathcal{T}}\frac{(r+r')^\alpha|u(y)-u(y')|}
{|r-r'|^\alpha+|z-z'|^\alpha+(r+r')^\alpha|\theta-\theta'|^\alpha},
\end{equation*}
where $y,y'$ are two points in $\mathcal{T}$ and $(r,\theta,z), (r',\theta', z')$ their Fermi coordinates.

\begin{defi}\label{def:holder-spaces}
The space $\mathcal C_0^{l,\alpha}(M \setminus \Lambda)$ is defined to be the set of all $u\in \mathcal C^{l,\alpha}(M\setminus \Lambda)$ for which the norm
\begin{equation*}
\|u\|_{\mathcal C^{l,\alpha}_0}=\|u\|_{\mathcal C^{l,\alpha}(M\setminus \mathcal T_{\rho/2})}+\sum_{j=0}^l\|\nabla^j u\|_{0,\alpha,0}^{\mathcal{T}}
\end{equation*}
is finite.

Now we consider a function $d_\Lambda$  behaving as the Fermi coordinate $r$ in a tubular neighborhood $\mathcal T_\rho$ of $\Lambda$ and a positive constant (say, identically one) away from the singular set. Then a function $u$ belongs to $\mathcal C_\mu^{l,\alpha}(M\setminus\Lambda)$ if and only if
\begin{equation*}
u=(d_\Lambda)^\mu \hat u \quad\text{for some}\quad \hat u\in \mathcal C^{l,\alpha}(M\setminus\Lambda).
\end{equation*}
This space is endowed with the natural norm
\begin{equation*}
\|u\|_{\mathcal C^{l,\alpha}_\mu}:=\|(d_\Lambda)^{-\mu} u\|_{\mathcal C^{l,\alpha}_0}.
\end{equation*}
\end{defi}
\noindent Basic properties of these norms can be found in \cite[Section 3]{Mazzeo-Pacard}.\\

In the particular setting of $\mathbb R^n\setminus \mathbb R^p$ it will be necessary to introduce  weighted H\"older spaces with respect to the $r$ variable for functions having different behaviors near $r=0$ and $r=\infty$.

First, in the case of an isolated singularity, this is, $\mathbb R^N\setminus\{0\}$, given any $\mu_1,\mu_2\in \R$, for $R>0$ fixed we set
\begin{equation*}\begin{split}
\mathcal C_{\mu_1}^{l,\alpha}(B_R \setminus \{0\})&=\{u=r^{\mu_1} \phi\,:\, \phi \in \mathcal C_0^{l,\alpha}(B_R \setminus \{0\})\},\\
\mathcal C_{\mu_2}^{l,\alpha}(\R^N \setminus B_R)&=\{u=r^{\mu_2} \phi\,:\, \phi\in \mathcal C_0^{l,\alpha}(\R^N \setminus B_R )\},
\end{split}
\end{equation*}
and thus we can define:

\begin{defi}
The space $\mathcal C^{l,\alpha}_{\mu_1,\mu_2}(\R^N \setminus \{0\})$ consists of all functions $u$ for which the norm
\begin{equation*}
\|u\|_{\mathcal C^{l,\alpha}_{\mu_1,\mu_2}}=\sup_{B_R\setminus \{0\}}\|r^{-\mu_1}u\|_{l,\alpha,0}+\sup_{\R^N\setminus B_R}\|r^{-\mu_2}u\|_{l,\alpha,0}
\end{equation*}
is finite. The spaces  $\mathcal C^{l,\alpha}_{\mu_1,\mu_2}(\R^n \setminus \R^p)$ are defined similarly, in terms of the (global) Fermi coordinates  $(r,\theta,z)$ and the weights $r^{\mu_1}$, $r^{\mu_2}$.
\end{defi}

\subsection{Weighted Sobolev spaces}\label{subsection:Sobolev}

We define, for $\delta\in\mathbb R$, the norm
\begin{equation}\label{norm-L2delta}
\|\varphi\|^2_{L^2_\delta(M\setminus\Lambda)}=\int_{M\setminus\mathcal T_\rho} \varphi^2\,dvol +
\int_0^{\rho}\int_{\mathbb S^{N-1}}\int_\Lambda \varphi^2 r^{\frac{n-4}{2}-p-2\delta-1}\,dzd\theta dr.
\end{equation}
The last term in the expression \eqref{norm-L2delta} above has a more user friendly expression in the variable $t=-\log r$. Indeed, using the same notation as in \eqref{tilde-L0}, if we set 
\begin{equation}\label{change:phiw}
\varphi = (d_\Lambda)^{-\frac{n-4}{4}}w=(d_\Lambda)^{-\frac{n-4}{4}+\frac{p}{2}}\widetilde w
\end{equation}
then, taking into account that $d_\Lambda\equiv r$ near the singular set,  it simplifies to
\begin{equation*}
\int_{-\log \rho}^{+\infty}\int_{\mathbb S^{N-1}}\int_\Lambda \widetilde w^2 e^{2\delta t}\,dzd\theta dt.
\end{equation*}

Finally, weighted Sobolev spaces $W^{k,2}_\delta$ are defined similarly.


\subsection{Duality}

We will consider the spaces $L^2_\delta(M\setminus\Lambda)$ and $L^2_{-\delta}(M\setminus\Lambda)$
to be dual with respect to the natural pairing
\begin{equation}\label{pairing}
L^2_\delta\times L^2_{-\delta}\ni (\widetilde w_1,\widetilde w_2)\mapsto \int_{M\setminus\Lambda}\widetilde w_1 \widetilde w_2.
\end{equation}
Thus, it is convenient to work in cylindrical variables and the operator $\widetilde{\mathbb L}_\ep$ in the $\widetilde w$ notation.

Let us look now at the model operator from \eqref{tilde-L0}, which already contains the structure near the singular set  of $\widetilde{\mathbb L}_\ep$. Fixed $\delta\in\mathbb R$, the dual of
\begin{equation*}
\widetilde{L}^{(0)} : L^2_{\delta}(\mathbb R^n\setminus \mathbb R^p)\to L^2_{\delta}(\mathbb R^n\setminus \mathbb R^p)
\end{equation*}
is given by
\begin{equation}\label{relation-adjoint}
(\widetilde{L}^{(0)})^* = r^{-2\delta} \widetilde{L}^{(0)} r^{2\delta}:L^2_{-\delta}(\mathbb R^n\setminus \mathbb R^p)\to L^2_{-\delta}(\mathbb R^n\setminus \mathbb R^p).
\end{equation}
A similar duality holds for  $\widetilde{\mathbb L}_\ep$ (or $\widetilde{\mathbb L}'_\ep$), as we have assumed it is self-adjoint. Relative to the pairing \eqref{pairing}, the adjoint of
$$\widetilde{ \mathbb  L}'_\ep:L^2_{\delta}(M\setminus\Lambda)\to L^2_{\delta}(M\setminus\Lambda)$$
 is 
$$(\widetilde{  \mathbb L}'_\ep)^*:L^2_{-\delta}(M\setminus \Lambda)\to L^2_{-\delta}(M\setminus \Lambda).$$

We will show in Proposition \ref{prop-not-so-easy} that  $\widetilde{\mathbb L}'_\ep$ (and thus, the original $\mathbb L_\ep$) is semi-Fredholm when $\delta>0$ not an indicial root. This implies that
\begin{equation}\label{ker}
\ker((\widetilde{  \mathbb L}'_\ep)^*)^\bot=\rango(\widetilde{  \mathbb L}'_\ep).
\end{equation}
Thus an easy way to prove that such $\widetilde{  \mathbb L}'_\ep$ is surjective is to check that its adjoint is injective.\\

\section{A priori estimates and $L^2$ semi-Fredholm properties}\label{section:Fredholm}

 Let $\mathbb L_\ep$ be the linearized operator around $\bar u_\ep$ in $M\setminus\Lambda$. Fredholm properties for this type of operators were shown in Mazzeo \cite{Mazzeo:edge} in great generality (using the theory of edge operators) and we refer to this paper for the complete proofs. Here, instead,  we consider a simpler PDE approach for the $L^2$ theory which was presented in the lecture notes
\cite{pacard}.


As explained above, it is more convenient to work in cylindrical coordinates and consider the conjugated  operator
 $\widetilde{\mathbb L}'_\ep: L^2_{\delta}(M\setminus\Lambda)\to L^2_{\delta}(M\setminus\Lambda)$.

Fixed $\ep>0$, $\widetilde{\mathbb L}'_\ep$ is linear and unbounded, densely defined and has closed graph. Our main result in this section proves (semi)-Fredholm properties, encoded in the a-priori estimate from Proposition \ref{prop-not-so-easy} for solutions of the equation
\begin{equation}\label{eq20}
\widetilde{  \mathbb L}'_\ep\widetilde w=\widetilde h \quad\text{in}\quad M\setminus\Lambda.
\end{equation}

Using the notation in  Remark \ref{indicial-delta}, we will denote by $\{\delta_j^\pm\}_j$ the indicial roots of $\widetilde{\mathbb L}'_\ep$ as $r\to 0$.

\begin{pro}\label{prop-not-so-easy}
Let $\delta\neq \delta_j^{\pm}$, $\delta>0$, and
take $\widetilde w\in L^2_\delta$, $\widetilde h\in L^2_{\delta}$ satisfying \eqref{eq20}. Then
\be\label{not-so-easy} \norm{\widetilde w}_{L^2_\delta(M\setminus\Lambda)}\leq C\lp \|
\widetilde h\|_{L^2_{\delta}(M\setminus\Lambda)} +\|\widetilde w\|_{L^2(\mathcal V)} \rp, \ee for  $\mathcal V$ any compact set in $M\setminus\Lambda$, and some constant $C(\mathcal V)$ not depending
on $\widetilde w$.
\end{pro}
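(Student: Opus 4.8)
The plan is to follow the elementary $L^2$ scheme of \cite{pacard}: split $\Omega\setminus\Lambda$ into a fixed tubular neighbourhood $\mathcal T_\rho$ of $\Lambda$ and its complement, prove the estimate on each piece, and recombine via a partition of unity, dumping the commutator terms---which are supported in a compact subset of $\Omega\setminus\Lambda$---into the last term of \eqref{not-so-easy}. Away from $\Lambda$, $\widetilde{\mathbb L}_\ep$ is a smooth, uniformly elliptic second order operator, so standard interior $L^2$ elliptic estimates on a finite cover of $\Omega\setminus\mathcal T_{\rho/2}$ by coordinate balls give $\|\widetilde w\|_{W^{2,2}(\Omega\setminus\mathcal T_{\rho/2})}\le C\big(\|\widetilde h\|_{L^2(\Omega\setminus\Lambda)}+\|\widetilde w\|_{L^2(V_0)}\big)$ for a suitable fixed compact set $V_0\subset\Omega\setminus\Lambda$; this is the routine part, and there the spaces $L^2_\delta$ and $L^2$ coincide since the weight is bounded above and below.

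The core is the estimate inside $\mathcal T_\rho$. I would pass to the cylindrical variable $t=-\log r$ and use the decomposition $\widetilde{\mathbb L}_\ep=\widetilde L^{(0)}+\mathcal E$, where $\widetilde L^{(0)}$ is the constant-coefficient conjugate model operator \eqref{tilde-L0} and, by Propositions \ref{prop:expression} and \ref{prop:compare-operators} (and Remark \ref{remark:dependence-z} for the dependence on $z$), the remainder $\mathcal E$ is a second order operator whose coefficients are $O(r^{\sigma})=O(e^{-\sigma t})$ relative to the leading symbol for some $\sigma>0$; the exponentially small term $b_4e^{-2t}\Delta_z$ of $\widetilde L^{(0)}$ may itself be moved into $\mathcal E$. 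Conjugating by the weight, put $v=e^{\delta t}\widetilde w$ and $\widetilde L^{(0)}_\delta=e^{\delta t}\widetilde L^{(0)}e^{-\delta t}$, a constant-coefficient operator on the cylinder $\mathbb R_t\times\mathbb S^{N-1}\times\Lambda$. Projecting onto the spherical harmonic $e_j$ and Fourier transforming in $t$, the symbol of $\widetilde L^{(0)}_\delta$ on the $j$-th mode at frequency $\xi$ is a quadratic whose only zeros occur at $\xi=0$ when $\delta$ equals a shifted indicial root $\delta_j^{\pm}$ (here the sign condition $\delta>0$, i.e.\ $\delta\neq 0$, is precisely what rules out a zero at $\xi\neq 0$ in those modes where the indicial roots are complex, cf.\ Proposition \ref{prop:indicial}). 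Since $\delta\neq\delta_j^{\pm}$ and the indicial roots march off to $\pm\infty$ as $j\to\infty$, only finitely many modes are near-critical and one obtains a uniform lower bound $c\,(1+\xi^2+\lambda_j)$ for the modulus of this symbol. Plancherel then yields the model estimate $\|v\|_{W^{2,2}}\le C\|\widetilde L^{(0)}_\delta v\|_{L^2}$ on the cylinder, equivalently $\|\widetilde w\|_{W^{2,2}_\delta(\mathcal T_\rho)}\le C\|\widetilde L^{(0)}\widetilde w\|_{L^2_\delta(\mathcal T_\rho)}$ for $\widetilde w$ supported in $\mathcal T_\rho$ away from $\{r=\rho\}$.

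To conclude, let $\chi$ be a cutoff supported in $\mathcal T_\rho$ with $\chi\equiv 1$ on $\mathcal T_{\rho/2}$ and apply the model estimate to $\chi\widetilde w$; from $\widetilde L^{(0)}(\chi\widetilde w)=\chi\widetilde h-\chi\mathcal E\widetilde w+[\widetilde L^{(0)},\chi]\widetilde w$ one reads off three contributions. The first, $\chi\widetilde h$, is bounded by $\|\widetilde h\|_{L^2_\delta}$. The commutator $[\widetilde L^{(0)},\chi]\widetilde w$ is a first order operator supported in the compact annulus $\mathcal T_\rho\setminus\mathcal T_{\rho/2}$, hence controlled by $C\|\widetilde w\|_{W^{1,2}}$ there, which by the exterior elliptic estimate reduces to $C(\|\widetilde h\|_{L^2_\delta}+\|\widetilde w\|_{L^2(V)})$ after enlarging $V_0$ to a compact $V$. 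The dangerous term satisfies $\|\chi\mathcal E\widetilde w\|_{L^2_\delta(\mathcal T_\rho)}\le C\rho^{\sigma}\|\widetilde w\|_{W^{2,2}_\delta(\mathcal T_\rho)}$, which for $\rho$ fixed small enough is absorbed into the left-hand side. Patching this with the exterior estimate gives \eqref{not-so-easy} (for $V$ a fixed large compact set; the statement for an arbitrary compact set then follows by monotonicity together with a covering/unique-continuation argument). I expect the main obstacle to be the bookkeeping in this final step: checking that $\mathcal E$ is genuinely lower order \emph{after} the weight conjugation, so that the $\rho^{\sigma}$ gain survives and the absorption is legitimate, and that the first order commutator terms do not degrade the weight---this is exactly where the normalization of $\mathcal N$ chosen in \eqref{eq} and the edge structure of $\mathbb L_\ep$ recorded in Remark \ref{remark:L0} are essential. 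Note that the argument uses no self-adjointness of $\widetilde{\mathbb L}_\ep$, so it applies verbatim to its adjoint, which is what is needed for the surjectivity discussion.
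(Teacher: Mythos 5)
Your localization and exterior pieces are fine and are essentially how the paper begins (Lemma \ref{lemma:localization} and the reduction to $\widetilde L^{(0)}$), but there is a concrete gap in how you treat the $z$-direction. You propose to conjugate by $e^{\delta t}$, view the result as constant-coefficient on the cylinder $\mathbb R_t\times\mathbb S^{N-1}\times\Lambda$, project onto spherical harmonics, and Fourier transform in $t$, noting parenthetically that the term $b_4e^{-2t}\Delta_z$ in $\widetilde L^{(0)}$ ``may itself be moved into $\mathcal E$.'' That step is where the argument breaks: once you remove $e^{-2t}\Delta_z$, what is left is the indicial operator $\widetilde L^{(0)}_\natural=\tilde b_0+b_2\partial_{tt}+b_3\Delta_\theta$, which has \emph{no} $z$-derivatives and hence is not elliptic on the full cylinder. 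The Plancherel estimate you derive from it controls $\partial_{tt}\widetilde w$ and $\Delta_\theta\widetilde w$ but says nothing about $\Delta_z\widetilde w$; consequently the absorption $\|\chi\,e^{-2t}\Delta_z\widetilde w\|_{L^2_\delta}\lesssim\rho^2\|\Delta_z\widetilde w\|_{L^2_\delta}$ cannot be closed, because the right-hand side never appears in the model estimate. The small coefficient $e^{-2t}$ does not make $e^{-2t}\Delta_z$ lower order: it is still genuinely second order in $z$, and this is exactly the feature that makes $\widetilde{\mathbb L}_\ep$ an \emph{edge} operator rather than a conic one. (Fourier transforming in $t$ while retaining $e^{-2t}\Delta_z$ fails for the same reason: the operator is not translation invariant in $t$, so the transform does not diagonalize it.)

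The paper goes the other way around: it Fourier transforms in $z$ (not in $t$), turning $e^{-2t}\Delta_z$ into $-e^{-2t}|\zeta|^2$, and then for each fixed $\zeta$ rescales via $b_4|\zeta|^2e^{-2t}=b_2e^{-2\tau}$ to land on the single Bessel-type ODE $K_j\omega=\partial_{\tau\tau}\omega-\delta_j^2\omega-e^{-2\tau}\omega$ posed on $(\tau_0,\infty)$ with $\tau_0$ depending on $|\zeta|$. The whole technical burden is then to get a constant \emph{independent of $\tau_0$}, i.e.\ uniform in $|\zeta|$ (Lemma \ref{lemma:tau0}), and that is precisely where $\delta>0$ enters: in the case $\delta>\delta_j$ one must use the modified Bessel functions $K_{\delta_j},I_{\delta_j}$ as fundamental solutions, and the monotonicity of $s^{\delta_j}K_{\delta_j}(s)$ forces a boundary term $J_2$ to have a sign only when $\delta>0$. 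Your explanation of $\delta>0$ (ruling out a symbol zero at $\xi\neq 0$ for the complex mode) is not what the hypothesis does here; for real indicial roots $\delta\neq\delta_j^\pm$ already gives a nonvanishing symbol, and for the complex pair $\delta_0^\pm$ the resolution still passes through the Bessel-function monotonicity, not a frequency restriction. So the proposal identifies the right reductions but misses both the essential role of the $e^{-2t}\Delta_z$ term and the actual mechanism behind the hypothesis $\delta>0$.
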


Note that there is no dependence on $\epsilon$ in the conclusion of the Proposition.  The proof follows from a series of Lemmas:

\begin{lemma}(Localization)\label{lemma:localization}
It is sufficient to prove the Proposition for functions in $L^2_\delta$ supported in $\mathcal T_{\rho}(\Lambda)$ for some small $\rho$.
\end{lemma}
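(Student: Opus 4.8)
The plan is the standard partition-of-unity reduction used throughout edge-operator theory. First I would fix a small $\rho>0$ and a smooth cutoff $\chi$ with $\chi\equiv 1$ on $\mathcal T_{\rho/2}(\Lambda)$, $\supp\chi\subset\mathcal T_{\rho}(\Lambda)$ and $0\le\chi\le 1$; the key point is that $\mathcal A:=\supp(\nabla\chi)$ is then a fixed compact subset of $\Omega\setminus\Lambda$, on which the weight $d$ is bounded above and below and on which $\widetilde{\mathbb L}_\ep$ is uniformly elliptic with coefficients bounded independently of $\ep$ (Remark \ref{remark:L0}). Given $\widetilde w\in L^2_\delta$ solving $\widetilde{\mathbb L}_\ep\widetilde w=\widetilde h$, I would split $\widetilde w=\chi\widetilde w+(1-\chi)\widetilde w$ and estimate the two pieces separately, using throughout that the commutator $[\widetilde{\mathbb L}_\ep,\chi]$ is a first-order differential operator whose coefficients are supported in $\mathcal A$, so that $\|[\widetilde{\mathbb L}_\ep,\chi]\widetilde w\|_{L^2_\delta}\le C\|\widetilde w\|_{W^{1,2}(\mathcal A)}$, and that interior elliptic estimates on $\mathcal A\subset\subset\mathcal A'\subset\subset\Omega\setminus\Lambda$ give $\|\widetilde w\|_{W^{1,2}(\mathcal A)}\le C(\|\widetilde h\|_{L^2(\mathcal A')}+\|\widetilde w\|_{L^2(\mathcal A')})$.

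For the piece $\chi\widetilde w$, which is supported in $\mathcal T_{\rho}(\Lambda)$, I would invoke the case of Proposition \ref{prop-not-so-easy} that is being assumed, obtaining $\|\chi\widetilde w\|_{L^2_\delta}\le C(\|\widetilde{\mathbb L}_\ep(\chi\widetilde w)\|_{L^2_\delta}+\|\widetilde w\|_{L^2(V_1)})$ for a fixed compact $V_1$. Since $\widetilde{\mathbb L}_\ep(\chi\widetilde w)=\chi\widetilde h+[\widetilde{\mathbb L}_\ep,\chi]\widetilde w$, the commutator contributes only a term controlled by the interior estimate above, so this piece is bounded by $C(\|\widetilde h\|_{L^2_\delta}+\|\widetilde w\|_{L^2(V)})$ for a suitable compact $V\supset V_1\cup\mathcal A'$.

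For the piece $(1-\chi)\widetilde w$, which is supported in $\Omega\setminus\mathcal T_{\rho/2}(\Lambda)$ where the weight is trivial, I would regard $\widetilde{\mathbb L}_\ep$ as a uniformly elliptic operator on the whole bounded domain $\Omega$ and use that $(1-\chi)\widetilde w$ solves $\widetilde{\mathbb L}_\ep((1-\chi)\widetilde w)=(1-\chi)\widetilde h-[\widetilde{\mathbb L}_\ep,\chi]\widetilde w$ with zero Dirichlet data on $\partial\Omega$; the standard a-priori estimate $\|v\|_{L^2(\Omega)}\le C(\|\widetilde{\mathbb L}_\ep v\|_{L^2(\Omega)}+\|v\|_{L^2(V)})$, proved by a compactness argument together with unique continuation for second-order elliptic operators, then controls it by $C(\|\widetilde h\|_{L^2_\delta}+\|\widetilde w\|_{L^2(V)})$. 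Adding the two estimates yields \eqref{not-so-easy}. I expect the main obstacle, such as it is, to be purely bookkeeping: keeping track of the commutator terms and of the nested compact sets so as to package everything into a single $\|\widetilde w\|_{L^2(V)}$, and checking that the elliptic estimate away from $\Lambda$ can be taken uniform in $\ep$, which follows from the convergence of the coefficients of $\widetilde{\mathbb L}_\ep$ recorded in Remark \ref{remark:L0}; there is no conceptual difficulty.
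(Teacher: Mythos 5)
Your proposal is correct and follows essentially the same route as the paper: introduce a cutoff $\chi$ adapted to $\mathcal T_{\rho}(\Lambda)$, write $\widetilde{\mathbb L}_\ep(\chi\widetilde w)=\chi\widetilde h+[\widetilde{\mathbb L}_\ep,\chi]\widetilde w$, and absorb the commutator and the far piece $(1-\chi)\widetilde w$ into the compactly supported term $\|\widetilde w\|_{L^2(V)}$. The paper compresses all of your detail about interior estimates, the uniform ellipticity away from $\Lambda$, and the bookkeeping of nested compact sets into the single phrase ``by adding a compactly supported term,'' but the mechanism is identical.
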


\begin{proof}
Introduce a cutoff $\chi$ identically one on  $\mathcal T_{\rho/2}(\Lambda)$, vanishing outside $\mathcal T_{\rho}(\Lambda)$. Then
\begin{equation*}
\widetilde h_1:=  \widetilde {\mathbb L}'_\ep (\widetilde w\chi)=\chi   \widetilde {\mathbb L}'_\ep \widetilde w+[ \widetilde{\mathbb L}'_\ep,\chi]\widetilde w.
\end{equation*}
Thus if  inequality \eqref{not-so-easy} is true for $\widetilde w\chi$, it is also true for $\widetilde w$ by adding a compactly supported term.
\end{proof}

\begin{lemma}(Reduction to the model case) For functions supported on $\mT_{\rho}(\Lambda)$ for some small $\rho$,
it is enough to prove \eqref{not-so-easy} for the model operator $\widetilde {L}^{(0)}$ instead of $\widetilde{  \mathbb L}'_\ep$.
\end{lemma}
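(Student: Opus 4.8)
By Lemma~\ref{lemma:localization} we may assume that $\widetilde w$ is supported in a tubular neighbourhood $\mathcal{T}_\rho(\Lambda)$ with $\rho$ as small as we wish, and, by Remark~\ref{remark:dependence-z}, we may also localize along $\Lambda$ and use $g_\Lambda$-normal coordinates about a point $z_0\in\Lambda$; the extra error terms so produced (in particular the replacement of $\Delta_{g_\Lambda}$ by the flat $\Delta_z$) vanish as $z\to z_0$ and the cutoff commutators are of lower order and, once the reduction is in place, get absorbed into the term $\|\widetilde w\|_{L^2(V)}$ of \eqref{not-so-easy}. The plan is then to write
\begin{equation*}
\widetilde{\mathbb L}_\ep \;=\; \widetilde L^{(0)} + \mathcal E_\ep ,
\end{equation*}
and to show that on $\mathcal{T}_\rho(\Lambda)$ the second order operator $\mathcal E_\ep$ is a perturbation of $\widetilde L^{(0)}$ whose size tends to $0$ as $\rho\to 0$, \emph{uniformly in $\ep$}.

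The operator $\mathcal E_\ep$ combines two contributions. First, the conjugate (by the weight in \eqref{change:phiw}) of the operator $\mathcal D$ of Proposition~\ref{prop:compare-operators}, which gains a positive power $r^{\sigma}$ on functions of definite homogeneity; since $r\le\rho$ on $\mathcal{T}_\rho$, this contributes a factor $O(\rho^{\sigma})$. Second, the difference $\widetilde L_\ep-\widetilde L^{(0)}$: by Proposition~\ref{prop:expression} the coefficients of $\widetilde L_\ep$, written in the variable $t=-\log r$, converge as $t\to+\infty$ to the ($\ep$-independent) coefficients of $\widetilde L^{(0)}$, at a rate governed by $v_\ep\to v_\infty$, which is uniform in $\ep$ near $r=0$ by Corollary~\ref{cor-model-solution}. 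Hence
\begin{equation*}
\|\mathcal E_\ep \widetilde w\|_{L^2_\delta(\mathcal{T}_\rho)} \;\le\; \kappa(\rho)\,\|\widetilde w\|_{W^{2,2}_\delta(\mathcal{T}_\rho)},\qquad \kappa(\rho)\to 0 \textas \rho\to 0,
\end{equation*}
with $\kappa$ independent of $\ep$.

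Next I would use the scaled a priori elliptic estimate
\begin{equation*}
\|\widetilde w\|_{W^{2,2}_\delta(\mathcal{T}_\rho)} \;\le\; C\big(\|\widetilde{\mathbb L}_\ep \widetilde w\|_{L^2_\delta(\mathcal{T}_\rho)} + \|\widetilde w\|_{L^2_\delta(\mathcal{T}_\rho)}\big),
\end{equation*}
for $\widetilde w$ supported in $\mathcal{T}_\rho(\Lambda)$, with $C$ independent of $\ep$ and of $z_0$, obtained from interior $L^2$ elliptic estimates on the dyadic pieces $\{2^{-j-1}\rho<r<2^{-j}\rho\}$ --- equivalently, on unit-length intervals in $t=-\log r$, where $\widetilde L_\ep$ is uniformly elliptic with uniformly bounded coefficients --- summed against the weight $e^{2\delta t}$ (legitimate since $\delta$ is not an indicial root). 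Then the reduction is immediate: assuming \eqref{not-so-easy} holds with $\widetilde L^{(0)}$ in place of $\widetilde{\mathbb L}_\ep$, given $\widetilde{\mathbb L}_\ep\widetilde w=\widetilde h$ with $\widetilde w$ supported in $\mathcal{T}_\rho(\Lambda)$ we write $\widetilde L^{(0)}\widetilde w=\widetilde h-\mathcal E_\ep\widetilde w$, apply the model estimate to get
\begin{equation*}
\|\widetilde w\|_{L^2_\delta}\;\le\; C_0\big(\|\widetilde h\|_{L^2_\delta}+\|\mathcal E_\ep\widetilde w\|_{L^2_\delta}+\|\widetilde w\|_{L^2(V)}\big),
\end{equation*}
and combine it with $\|\mathcal E_\ep\widetilde w\|_{L^2_\delta}\le \kappa(\rho)\,C\big(\|\widetilde h\|_{L^2_\delta}+\|\widetilde w\|_{L^2_\delta}\big)$ from the two previous displays (using $\widetilde{\mathbb L}_\ep\widetilde w=\widetilde h$). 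Choosing $\rho$ small enough that $C_0C\kappa(\rho)\le\tfrac12$ --- allowed by Lemma~\ref{lemma:localization} --- and absorbing $\|\widetilde w\|_{L^2_\delta}$ into the left-hand side yields \eqref{not-so-easy} for $\widetilde{\mathbb L}_\ep$.

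The step I expect to be the main obstacle is the scaled elliptic estimate with a constant uniform in $\ep$ and in $z_0\in\Lambda$, together with the closely related point that the coefficients of $\widetilde L_\ep$ approach those of $\widetilde L^{(0)}$ at an $\ep$-uniform rate; both rest on the $\ep$-uniform control of the model solution $v_\ep$ near $r=0$ provided by Proposition~\ref{ODE-study} and Corollary~\ref{cor-model-solution}.
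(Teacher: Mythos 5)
There is a genuine gap in the key estimate of the proposal. You claim
$$\|\mathcal E_\ep \widetilde w\|_{L^2_\delta(\mathcal{T}_\rho)} \le \kappa(\rho)\,\|\widetilde w\|_{W^{2,2}_\delta(\mathcal{T}_\rho)},\qquad \kappa(\rho)\to 0 \text{ as } \rho\to 0,$$
\emph{uniformly in $\ep$}, on the grounds that the coefficients of $\widetilde L_\ep$ converge to those of $\widetilde L^{(0)}$ at an $\ep$-uniform rate as $t\to+\infty$. This is not the case. Because of the scaling \eqref{rescale}, one has $v_\ep(t)=v_1(t+\log\ep)$, hence the coefficients $a_\ell^\ep(t)=a_\ell^1(t+\log\ep)$, and the deviation $a_\ell^\ep(t)-\beta_\ell^{(0)}$ is governed by the ratio $r/\ep$, not by $r$ alone: it is $O((r/\ep)^\sigma)$ for $r\lesssim\ep$, but in the intermediate zone $\ep\ll r<\rho$ the coefficients $a_\ell^\ep$ are close to $\beta_\ell^{(\infty)}$, which differ from $\beta_\ell^{(0)}$ by a quantity of order one. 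So for any fixed $\rho$, the set $\{a\ep<r<\rho\}$ grows as $\ep\to0$ and on it $\widetilde L_\ep-\widetilde L^{(0)}$ has coefficients that do not shrink, and this set cannot be absorbed into the compact term $\|\widetilde w\|_{L^2(V)}$ uniformly in $\ep$ either. The absorption argument built on this bound therefore collapses.

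The paper's proof avoids exactly this trap by not attempting a single uniform perturbation estimate on all of $\mathcal T_\rho$: it keeps the two regimes separate, proving $\widetilde L_\ep=\widetilde L^{(0)}(1+O(e^{-\sigma' t}))$ only in $\{r<a\ep\}$ (via $\|D^\ell(v_\ep-v_\infty)\|_\infty\le C_\ell e^{-\sigma t}$ there), and treating $\{r\gg\ep\}$ as a separate region where $\widetilde L_\ep$ is an $\ep$-independent, uniformly elliptic operator for which standard Sobolev estimates apply, without any claim that it is close to $\widetilde L^{(0)}$. Your reduction via Proposition~\ref{prop:compare-operators}, the contribution $O(\rho^\sigma)$ from $\mathcal D$, and the dyadic scaled elliptic estimate are all fine; what is missing is the region-by-region treatment of the coefficient comparison, which cannot be replaced by a single uniform $\kappa(\rho)\to0$.
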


\begin{proof}
First recall Proposition \ref{prop:compare-operators} (and Remark \ref{remark:dependence-z}) to reduce the problem to study the model operator ${L}'_\ep$ (or its conjugate $\widetilde {L}'_\ep$). Next, because of the ODE study from Proposition \ref{ODE-study}, we have for some $\varsigma>0$, in a small enough neighborhood $\{r<m\epsilon\}$ for some $m>0$,
$$\norm {D^\ell \lp v_\epsilon - v_\infty \rp}_{L^{\infty}(\mT_{\rho}(\Lambda))}\leq C_\ell \, e^{-\varsigma t},\quad \ell=0,1,\ldots$$
 Thus it is clear that
$$\widetilde {L}'_\ep=\widetilde {L}^{(0)}(1+O(e^{-\varsigma' t})),$$
for some $\varsigma'>0$.

Now, for points a bit further away from $\Lambda$, say, for $r>m\epsilon$, the operator $\widetilde L'_\ep$ does not depend on $\epsilon$, and it is a regular uniformly elliptic operator so standard Sobolev estimates hold.
\end{proof}

Now we give the proof of Proposition \ref{prop-not-so-easy} for the model $\widetilde{L}^{(0)}$. Assume, for now, that
$\delta_j:=\delta^{+}_j>0$ for all $j$. Recall the definition of the (conjugate) operator from \eqref{tilde-L0}; after projection over spherical harmonics, it becomes
$$\widetilde{ L}^{(0)}_{j}\widetilde w=(\tilde b_0-b_3\lambda_j)\widetilde w+b_2 \partial_{tt}\widetilde w +b_4 e^{-2t}\Delta_z \widetilde w,\quad b_2,b_4<0.$$
Take the Fourier transform in the variable $z$, and set
$$K_j \omega:= \tilde b_{0,j}\omega +b_2 \partial_{tt} \omega-b_4 e^{-2 t}|\zeta|^2\omega, $$
where we have defined $\tilde b_{0,j}:=\tilde b_0-b_3\lambda_j$, $\zeta$ is the Fourier variable and $\omega$ the Fourier transform of $\widetilde w$.
Now we make the change
$b_4|\zeta|^2 e^{-2 t}=b_2e^{-2\tau}$ and work in the variable $\tau$. This operator reduces  (up to a negative multiplicative constant) to
\begin{equation}\label{Kj}
K_j \omega=\partial_{\tau\tau}\omega-\delta_j^2\omega-e^{-2\tau}\omega,\quad \text{for }\omega=\omega(\tau).
\end{equation}
Define the space $L^2_\delta(d\tau)$ the one-dimensional weighted space with respect to the variable $\tau$ and weight $e^{2\delta \tau}$, and let us study the mapping properties of $K_j$ in $L^2_\delta(d\tau)$.

Note that for functions supported on $\tau\in(\tau_0,\infty)$ for $\tau_0$ big enough, the term  $-e^{-2\tau}$ is just a perturbation and can be ignored. Thus, for each fixed $j$, we consider the equation
\begin{equation}\label{problem-K}
\mathfrak K_j\omega=\psi\quad \mbox{for}\quad \mathfrak K_j\omega:=\partial_{\tau\tau}\omega-\delta_j^2\omega.
\end{equation}
Without loss of generality, we take $\tau_0=0$ in the next Lemma. The dependence on $\tau_0$ will be retaken in Lemma \ref{lemma:tau0}, in order be able to go from the variable $\tau$ back to the variable $t$.

Such  $\mathfrak K_j$   is a totally characteristic operator and has good Fredholm properties. Indeed:

\begin{lem}\label{lemma:not-so-easy-preliminary}
If $\delta$ not an indicial root, for every solution $\omega(\tau)$ of \eqref{problem-K} supported on $(0,\infty)$ we have
\begin{equation}\label{not-so-easy-preliminary}
\|\omega\|_{L^2_\delta(d\tau)}\leq C\|\psi\|_{L^2_\delta(d\tau)}.
\end{equation}
\end{lem}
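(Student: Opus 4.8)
The plan is to establish \eqref{not-so-easy-preliminary} by a Fourier-analysis argument on the real line, exploiting that the operator $\mathfrak K_j = \partial_{\tau\tau} - \delta_j^2$ has constant coefficients. First I would record that the conjugation $\omega \mapsto e^{-\delta\tau}\omega$ turns an $L^2_\delta(d\tau)$ estimate into an unweighted $L^2(d\tau)$ estimate for the conjugated operator
\[
\mathfrak K_j^{(\delta)} := e^{-\delta\tau}\,\mathfrak K_j\, e^{\delta\tau} = \partial_{\tau\tau} + 2\delta\,\partial_\tau + (\delta^2 - \delta_j^2).
\]
After taking the Fourier transform in $\tau$ (writing $\xi$ for the dual variable), $\mathfrak K_j^{(\delta)}$ becomes multiplication by the symbol
\[
P_j^{(\delta)}(\xi) = -\xi^2 + 2i\delta\xi + (\delta^2 - \delta_j^2).
\]
The key point is that this symbol is bounded away from zero: $|P_j^{(\delta)}(\xi)|^2 = (\xi^2 + \delta_j^2 - \delta^2)^2 + 4\delta^2\xi^2$, and since $\delta \neq \pm\delta_j$ (the indicial roots are precisely $\pm\delta_j$, the zeros of $\delta^2 - \delta_j^2$ viewed as a function of $\delta$, or equivalently $\alpha = \pm\delta_j$ solving $b_2\alpha^2 + \tilde b_{0,j} = 0$), this quantity has a strictly positive lower bound $c_j > 0$ whenever $\delta$ is fixed off the indicial set. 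Hence $1/P_j^{(\delta)}$ is a bounded Fourier multiplier, giving $\|\omega\|_{L^2(d\tau)} \le C_j \|\mathfrak K_j^{(\delta)}\omega\|_{L^2(d\tau)}$ by Plancherel, which unwinds to \eqref{not-so-easy-preliminary} for each fixed $j$.

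The one subtlety is that $\omega$ is only assumed supported on $(0,\infty)$, not on all of $\R$; strictly speaking one should first extend $\omega$ (and $\psi = \mathfrak K_j \omega$) by zero to $\tau < 0$, check that this extension is still in $L^2_\delta$ with $\mathfrak K_j(\text{extension}) = \psi$ in the distributional sense away from $\tau = 0$, and absorb any boundary contribution at $\tau = 0$ — but since $\omega$ is supported in $(0,\infty)$ it vanishes near $0$, so the zero-extension introduces no jump and the global Fourier argument applies verbatim. I would also note that the bound obtained is independent of which specific $\delta$ is chosen, as long as it stays in a fixed interval between consecutive indicial roots, and — crucially for the later uniform estimate over all modes $j$ — that $c_j \to \infty$ as $j \to \infty$ (because $\tilde b_{0,j} = \tilde b_0 - b_3\lambda_j \to +\infty$), so the constants $C_j$ are in fact uniformly bounded in $j$; this is what will let the per-mode estimates be summed to recover \eqref{not-so-easy} for the full operator $\widetilde L^{(0)}$ in the next step.

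The main obstacle, such as it is, is bookkeeping rather than analysis: one must keep careful track of the chain of changes of variables ($t \rightsquigarrow \tau$ via $b_4|\zeta|^2 e^{-2t} = b_2 e^{-2\tau}$, the Fourier transform in $z$, the conjugation by the weight) to be sure that the weight $e^{2\delta\tau}$ and the indicial roots $\delta_j^\pm$ transform correctly and that the sign conditions $b_2, b_4 < 0$ are used where needed to make the reduction in \eqref{Kj} legitimate. Once the reduction to the scalar constant-coefficient operator $\mathfrak K_j$ on $\R$ is in place, the estimate itself is immediate from the Fourier multiplier being bounded, which is exactly the statement that $\delta$ avoids the indicial roots.
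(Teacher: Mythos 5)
Your Fourier-multiplier argument is correct, and it is a genuinely different route from the one the paper takes. The paper's proof is elementary ODE/energy estimates: for $|\delta|<\delta_j$ it multiplies the equation by $e^{2\delta\tau}\omega$, integrates by parts, and combines the resulting identity with the Hardy-type inequality $\delta^2\int\omega^2 e^{2\delta\tau}\,d\tau\le\int(\partial_\tau\omega)^2e^{2\delta\tau}\,d\tau$ to get the coefficient $\delta_j^2-\delta^2$ in front of $\|\omega\|^2_{L^2_\delta}$; for $|\delta|>\delta_j$ it writes $\omega$ via the variation-of-constants formula using the two exponential solutions $e^{\pm\delta_j\tau}$ and integrates by parts on that representation. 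Your conjugation-plus-Plancherel approach handles both ranges of $\delta$ at once: the estimate reduces to the lower bound $|P_j^{(\delta)}(\xi)|^2=(\xi^2+\delta_j^2-\delta^2)^2+4\delta^2\xi^2\ge c_j>0$, which fails exactly when $\delta=\pm\delta_j$, i.e.\ precisely at the indicial roots. This is arguably the tidier way to prove the constant-coefficient, $\tau_0=0$ estimate in one shot, and your observation that $c_j=(\delta_j^2-\delta^2)^2$ grows with $j$ (so the per-mode constants improve, not worsen) is correct and worth keeping.

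Two small points. First, the conjugation should be $\omega\mapsto e^{+\delta\tau}\omega$, not $e^{-\delta\tau}\omega$: with the paper's convention $\|\omega\|^2_{L^2_\delta(d\tau)}=\int\omega^2 e^{2\delta\tau}\,d\tau$, it is $\|e^{\delta\tau}\omega\|_{L^2}$ that equals $\|\omega\|_{L^2_\delta}$, and the conjugated operator is $e^{\delta\tau}\mathfrak K_j e^{-\delta\tau}=\partial_{\tau\tau}-2\delta\partial_\tau+(\delta^2-\delta_j^2)$. This changes only the sign of the imaginary part of the symbol and leaves $|P_j^{(\delta)}|$ unchanged, so the estimate is unaffected, but the sign should be fixed. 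Second, be aware that the paper's more pedestrian variation-of-constants route is chosen with the subsequent Lemma~\ref{lemma:tau0} in mind: there the exponential term $-e^{-2\tau}\omega$ cannot be discarded and the fundamental solutions become modified Bessel functions $K_{\delta_j},I_{\delta_j}$, so the constant-coefficient Fourier argument would not carry over directly. For the present statement, however, your argument is complete and correct.
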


\begin{proof}
First we show that the estimate is true if $-\delta_j<\delta<\delta_j$. Multiply equation \eqref{problem-K} by $e^{2\delta \tau}\omega$:
\be\label{estimate80}
- \int_0^\infty \omega(\partial_{\tau\tau}\omega) e^{2\delta \tau}\,d\tau
+\delta_j^2\int_0^\infty \omega^2 e^{2\delta \tau}\,d\tau=-\int_0^\infty \psi \omega e^{2\delta \tau}\,d\tau.\ee
Integration by parts, noting that the boundary terms vanish, yields
\be\label{estimate90}-\int_0^\infty \omega(\partial_{\tau\tau}\omega) e^{2\delta \tau}\,d\tau= \int_0^\infty
(\partial_\tau \omega)^2 e^{2\delta \tau}\,d\tau+ 2\delta\int_0^\infty \omega(\partial_\tau \omega)
e^{2\delta \tau}\,d\tau.\ee
For the last term in \eqref{estimate90},
\be\label{estimate95}2\delta\int_0^\infty \omega(\partial_\tau \omega) e^{2\delta \tau}\,d\tau=
\delta\int_0^\infty \partial_\tau (\omega^2)e^{2\delta \tau}\,d\tau =-2\delta^2 \int_0^\infty \omega^2
e^{2\delta \tau}\,d\tau.\ee
Substitute the two expressions above into \eqref{estimate80} to obtain
\be\label{estimate110}
\bs\int_0^\infty (\partial_\tau \omega)^2 e^{2\delta \tau}\,d\tau
+(\delta_j^2-2\delta^2) \int_0^\infty \omega^2 e^{2\delta \tau}\,d\tau  =-\int_0^\infty \psi \omega
e^{2\delta \tau}\,d\tau \\
\leq \lp\int_0^\infty \psi^2 e^{2\delta \tau}\,d\tau\rp^{\frac{1}{2}}
\lp\int_0^\infty \omega^2 e^{2\delta \tau}\,d\tau\rp^{\frac{1}{2}}.
\end{split}\ee
On the other hand, Holder estimates in \eqref{estimate95} above will give
$$\delta\int_0^\infty \omega^2 e^{2\delta \tau}\,d\tau\leq
 \lp \int_0^\infty \omega^2 e^{2\delta \tau} \,d\tau\rp^{\frac{1}{2}}
\lp\int_0^\infty (\partial_\tau \omega)^2 e^{2\delta \tau} \,d\tau\rp^{\frac{1}{2}}$$ and thus,
\be\label{estimate100} \delta^2\int_0^\infty \omega^2 e^{2\delta \tau}\leq \int_0^\infty (\partial_\tau
\omega)^2 e^{2\delta \tau}\, d\tau.\ee
Substituting \eqref{estimate100} into \eqref{estimate110} implies
\begin{equation*}\label{estimate115}
(\delta_j^2-\delta^2) \int_0^\infty \omega^2 e^{2\delta \tau}\,d\tau
\leq \lp\int_0^\infty \psi^2 e^{2\delta \tau}\,d\tau\rp^{\frac{1}{2}} \lp\int_0^\infty \omega^2
e^{2\delta \tau}\,d\tau\rp^{\frac{1}{2}}.
\end{equation*}
To finish, just note that $\delta_j^2-\delta^2>0$ because of our hypothesis, so that
$$\int_0^\infty \omega^2 e^{2\delta \tau}\,d\tau
\leq \frac{1}{(\delta_j^2-\delta^2)^2}\int_0^\infty \psi^2 e^{2\delta \tau}\,d\tau,$$
as desired.\\

Now we prove estimate \eqref{not-so-easy-preliminary} if $\delta>\delta_j$ (the remaining case $\delta<-\delta_j$ is very similar).
First remark that problem \eqref{problem-K} is an ODE, which has a unique solution. Using the variation of constants formula, it is written as
$$ \omega=\frac{1}{W}\lp B^+\int_{\tau}^{+\infty} B^- \psi - B^-\int_{\tau}^{+\infty} B^+ \psi \rp:=\frac{1}{W}(\omega_1+\omega_2),$$
where $B^+(\tau)=e^{\delta_j\tau}$ and $B^-(\tau)=e^{-\delta_j\tau}$  and $W$ the  Wronskian. We proceed
as follows: first, for the term $\omega_2:=B^- \int B^+ \psi$, use integration by parts
\begin{equation*}\begin{split}
\|\omega_2\|^2_{L^2_{\delta}(d\tau)}&= \int_{0}^{\infty} e^{-2\delta_j \tau} \lp \int_{\tau}^\infty
e^{\delta_j \tau} \psi\rp^2e^{2\tau\delta}\,d\tau\\
&=\int_0^\infty \partial_\tau \left( \frac{1}{2\delta-2\delta_j}e^{-2\delta_j\tau+2\delta \tau}\right)\lp \int_{\tau}^\infty
e^{\delta_j \tau} \psi\rp^2\,d\tau\\&
=\frac{1}{\delta-\delta_j}\int_0^\infty \psi \omega_2 e^{2\delta \tau}\,d\tau.
\end{split}\end{equation*}
To finish, just use Holder inequality:
$$\|\omega_2\|^2_{L^2_{\delta}(d\tau)}\lesssim \lp \int_0^\infty  \psi^2 e^{2\delta \tau}d\tau \rp^{\frac{1}{2}}
\|\omega_2\|_{L^2_{\delta}(d\tau)}.$$
For the first term of $\omega_1$ the inequality is proved similarly.
\end{proof}

Now we go back to the problem
\begin{equation}\label{problem-Kj}
K_j \omega=\psi\quad \mbox{for}\quad K_j \omega=\partial_{\tau\tau}\omega-\delta_j^2\omega-e^{-2\tau}\omega,
\end{equation}
in order to understand the dependence on $\tau_0$. While estimate \eqref{not-so-easy-preliminary} should still be true, the constant $C$ would depend on $\tau_0$. This is not enough to go back to the variable $t$ (recall that we are working in general with functions supported on $\mathcal T_\rho(\Lambda)$. The strongest assumption $\delta>0$ will provide this extra control, as we will show in the following Lemma:

\begin{lem}\label{lemma:tau0}
Fix $\delta>0$ not an indicial root. Let $\omega(\tau)$ be a solution of \eqref{problem-Kj} supported on $\tau\in(\tau_0,\infty)$, $\tau_0\in\mathbb R$. Then
\begin{equation}\label{not-so-easy-s}
\|\omega\|_{L^2_\delta(d\tau)}\leq C\|\psi\|_{L^2_\delta(d\tau)},
\end{equation}
for a constant $C$ independent of $\omega$ and $\tau_0$.
\end{lem}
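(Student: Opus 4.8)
The plan is to run the same energy estimate as in Lemma~\ref{lemma:not-so-easy-preliminary}, but to organize it so that the extra potential term $-e^{-2\tau}\omega$ \emph{helps} rather than hurts, thereby producing a constant that is uniform in $\tau_0$. First I would multiply \eqref{problem-Kj} by $e^{2\delta\tau}\omega$ and integrate over $(\tau_0,\infty)$. Since $\omega$ is supported in $(\tau_0,\infty)$ and lies in $L^2_\delta(d\tau)$ together with its derivatives (elliptic regularity for the ODE), all boundary contributions at $\tau_0$ and at $+\infty$ vanish, exactly as before. Integrating by parts twice, and using the identity $2\delta\int \omega\,\partial_\tau\omega\, e^{2\delta\tau}\,d\tau=-2\delta^2\int\omega^2 e^{2\delta\tau}\,d\tau$ from \eqref{estimate95}, I arrive at
\begin{equation*}
\int_{\tau_0}^\infty(\partial_\tau\omega)^2 e^{2\delta\tau}\,d\tau
+(\delta_j^2-2\delta^2)\int_{\tau_0}^\infty\omega^2 e^{2\delta\tau}\,d\tau
+\int_{\tau_0}^\infty e^{-2\tau}\omega^2 e^{2\delta\tau}\,d\tau
=-\int_{\tau_0}^\infty\psi\,\omega\,e^{2\delta\tau}\,d\tau.
\end{equation*}
The key point is that the potential term $\int e^{-2\tau}\omega^2 e^{2\delta\tau}\,d\tau$ is \emph{nonnegative} and appears with a good sign, so it can simply be dropped from the left side.

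Next I would handle the sign-indefinite algebraic coefficient $(\delta_j^2-2\delta^2)$ exactly as in the previous Lemma. Combining the Hardy-type inequality $\delta^2\int\omega^2 e^{2\delta\tau}\,d\tau\le\int(\partial_\tau\omega)^2 e^{2\delta\tau}\,d\tau$ (which holds for functions supported on a half-line, with constant independent of $\tau_0$; this is precisely \eqref{estimate100}) with the displayed identity above, I get
\begin{equation*}
(\delta_j^2-\delta^2)\int_{\tau_0}^\infty\omega^2 e^{2\delta\tau}\,d\tau
\le\Big(\int_{\tau_0}^\infty\psi^2 e^{2\delta\tau}\,d\tau\Big)^{1/2}
\Big(\int_{\tau_0}^\infty\omega^2 e^{2\delta\tau}\,d\tau\Big)^{1/2},
\end{equation*}
valid whenever $0<\delta<\delta_j$. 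Since none of the constants here ($\delta$, $\delta_j$, and the Hardy constant) depend on $\tau_0$, this gives \eqref{not-so-easy-s} in the regime $0<\delta<\delta_j$ with a uniform $C=(\delta_j^2-\delta^2)^{-1}$. For the complementary regime $\delta>\delta_j$, I would instead use the variation-of-constants representation of the solution of \eqref{problem-K} (treating $-e^{-2\tau}\omega$ as an absorbed perturbation on $(\tau_0,\infty)$, or equivalently using the genuine decaying/growing solutions of \eqref{problem-Kj}), and repeat the integration-by-parts trick from the second half of the proof of Lemma~\ref{lemma:not-so-easy-preliminary}; the crucial gain is that when $\delta>0$ the relevant exponential weights are integrable at $+\infty$ and the bounding constant $(\delta-\delta_j)^{-1}$ is again $\tau_0$-independent. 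Finally I would sum the resulting estimates over the spherical-harmonic index $j$: since $\delta_j=\delta_j^+\to+\infty$ monotonically and $\delta>0$ is fixed away from every indicial root, only finitely many $j$ are ``close'' to the threshold, and for all large $j$ the constant $(\delta_j^2-\delta^2)^{-1}$ is uniformly bounded, so the sum converges and yields \eqref{not-so-easy-s} for $\widetilde w$ itself.

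The main obstacle I anticipate is not the energy inequality per se but the bookkeeping needed to make ``uniform in $\tau_0$'' genuinely rigorous: one must verify that the boundary terms at $\tau=\tau_0$ really vanish (using that $\omega$ is supported \emph{in} $(\tau_0,\infty)$, hence vanishes at $\tau_0$ together with enough derivatives), and that the Hardy inequality \eqref{estimate100} on the half-line $(\tau_0,\infty)$ has a constant independent of $\tau_0$ — which it does, by translation invariance. A secondary subtlety is the case $\delta>\delta_j$ where the naive energy method fails and one genuinely needs the representation formula; there the role of the hypothesis $\delta>0$ is to guarantee that $\omega_1=B^+\int_\tau^\infty B^-\psi$ and $\omega_2=B^-\int_\tau^\infty B^+\psi$ both lie in $L^2_\delta(d\tau)$ with norms controlled independently of $\tau_0$, which is where $\delta>0$ (rather than merely $\delta\notin\{\delta_j^\pm\}$) is essential.
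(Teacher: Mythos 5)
Your treatment of the regime $0<\delta<\delta_j$ coincides with the paper's: multiply by $e^{2\delta\tau}\omega$, integrate by parts, observe that the extra potential $\int e^{-2\tau}\omega^2 e^{2\delta\tau}\,d\tau$ contributes with a favourable sign and can simply be discarded, and then close the estimate with the translation-invariant Hardy inequality \eqref{estimate100}. That part is fine and is indeed $\tau_0$-uniform.

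The regime $\delta>\delta_j$ is where there is a genuine gap. You propose to ``treat $-e^{-2\tau}\omega$ as an absorbed perturbation on $(\tau_0,\infty)$, or equivalently use the genuine decaying/growing solutions of \eqref{problem-Kj}.'' These two routes are not equivalent, and the first one cannot give a $\tau_0$-uniform constant: absorbing the perturbation into $\psi$ via Lemma~\ref{lemma:not-so-easy-preliminary} produces $\|\omega\|_{L^2_\delta}\le C\|\psi\|_{L^2_\delta}+Ce^{-2\tau_0}\|\omega\|_{L^2_\delta}$, and the second term can only be absorbed into the left side when $Ce^{-2\tau_0}<1$; as $\tau_0\to-\infty$ this blows up, which is precisely the regime the Lemma is designed to control. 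The second route is the right one, but it is not enough to point at ``the genuine decaying/growing solutions''; the whole point of the paper's argument is a monotonicity identity for the modified Bessel function $B_+(\tau)=K_{\delta_j}(e^{-\tau})$, namely
$\partial_\tau\log\bigl(e^{-2\delta_j\tau}B_+(\tau)^2\bigr)\ge 0$, which follows from $\partial_s\bigl(s^{\delta_j}K_{\delta_j}(s)\bigr)\le 0$ (\cite{Watson}, Property~3.71). This is what makes the term $J_2$ in the integration-by-parts decomposition nonpositive, so that it can be dropped and only the Cauchy--Schwarz-controllable term $J_1$ remains, with a constant visibly independent of $\tau_0$. Your stated reason for why $\delta>0$ is essential (``the relevant exponential weights are integrable at $+\infty$'') does not identify this mechanism and is not the operative fact; the sign of $\delta+\delta_j$ in the prefactor of $J_2$, combined with the Bessel monotonicity, is what is actually used.

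A smaller point: the Lemma is stated for a fixed spherical-harmonic index $j$, so the summation over $j$ in your last paragraph is outside its scope (it belongs to the assembly in Proposition~\ref{prop-not-so-easy}) and, as phrased, would also require a uniform-in-$j$ bound on the constants that you have not established.
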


\begin{proof}
The proof goes similarly to that of Lemma \ref{lemma:not-so-easy-preliminary}. First, in the case
$0<\delta<\delta_j$, one can repeat the proof line by line, noting that the additional term $-\int \omega^2e^{2\tau}e^{2\delta \tau}$ has the right sign and can be dropped while keeping the inequality.

The case $\delta>\delta_j$ is more delicate, since involves Bessel functions.  We can still write the solution to problem \eqref{problem-K} as
$$ \omega=\frac{1}{W}\lp B^+\int_{\tau}^{+\infty} B^- \psi - B^-\int_{\tau}^{+\infty} B^+ \psi \rp=:\frac{1}{W}(\omega_1+\omega_2),$$
where
\begin{equation*}
B_+(\tau):= K_{\delta_j}(e^{-\tau}),\quad
B_-(\tau):= I_{\delta_j}(e^{-\tau}),
\end{equation*}
where $I_{\delta_j},K_{\delta_j}$ are the modified Bessel functions of the second kind. Their asymptotic behavior is well known and, indeed, when $\tau\to +\infty$,
$B^+(\tau)\sim e^{{\delta_j} \tau}$ and $B^-(\tau)\sim e^{-{\delta_j} \tau}$. The  Wronskian $W$ is well known and has constant value (see \cite{Watson}, Chapter III, formula (80)).

We will give the proof for the term $\omega_1:=B_+ \int_\tau^\infty B_- \psi$. An analogous argument yields the estimate for $\omega_2$. First use integration by parts
\begin{equation*}\begin{split}
&\|\omega_1\|^2_{L^2_{\delta}(d\tau)}\\&= \int_{\tau_0}^{\infty} B_+(\tau)^2 \lp \int_{\tau}^\infty
B_-(\sigma) \psi(\sigma)\,d\sigma\rp^2 e^{2\tau\delta}\,d\tau\\
&=\int_{\tau_0}^\infty \partial_\tau \left( \frac{1}{2\delta+2\delta_j}e^{2\delta \tau +2\delta_j \tau}\right)e^{-2\delta_j \tau}B_+(\tau)^2\lp \int_{\tau}^\infty
B_-(\sigma) \psi(\sigma)\,d\sigma\rp^2\,d\tau\\&
=:J_1+J_2,
\end{split}\end{equation*}
where
\begin{equation*}
\begin{split}
J_1&=\frac{1}{\delta+\delta_j}\int_{\tau_0}^\infty e^{2\delta \tau}B_+(\tau)^2B_-(\tau)\psi(\tau)\int_\tau^\infty B_-(\sigma)\psi(\sigma)\,d\sigma \,d\tau\\&
=\frac{1}{\delta+\delta_j}\int_{\tau_0}^\infty e^{2\delta \tau}B_+(\tau)B_-(\tau)\psi(\tau) \omega_1(\tau)\,d\tau,
\end{split}
\end{equation*}
just nothing that $B_+(\tau)B_-(\tau)$ is a uniformly bounded positive function. Finally,  H\"older's inequality yields
\begin{equation*}
\begin{split}
J_1&\leq  \frac{C}{\delta+\delta_j} \left(\int_{\tau_0}^\infty \psi(\tau)^2e^{2\delta \tau}\,d\tau\right)^{1/2}\left(\int_{\tau_0}^\infty \omega_1(\tau)^2e^{2\delta \tau}\,d\tau\right)^{1/2}\\
& =\frac{C}{\delta+\delta_j}\|\psi\|_{L^2_{\delta}(d\tau)}\|\omega_1\|_{L^2_{\delta}(d\tau)}.
\end{split}\end{equation*}
On the other hand,
\begin{equation*}\begin{split}
J_2&= -\frac{1}{2\delta+2\delta_j}\int_{\tau_0}^\infty e^{2\delta \tau+2\delta_j \tau} \partial_\tau \left( e^{-2\delta_j \tau}B_+(\tau)^2\right)\int_\tau^\infty B_-(\sigma)\psi(\sigma)\,d\sigma \,d\tau \\
&= -\frac{1}{2\delta+2\delta_j}\int_{\tau_0}^\infty e^{2\delta \tau} \partial_\tau \log\left( e^{-2\delta_j \tau}B_+(\tau)^2\right)\omega_1(\tau)^2\,d\tau.
\end{split}
\end{equation*}
Calculate, for $s=e^{-\tau},$
\begin{equation*}
\begin{split}
\partial_\tau \log\left( e^{-2\delta_j \tau}B_+(\tau)^2\right)&=2\left[-\delta_j+\frac{\partial_\tau B_+(\tau)}{B_+(\tau)}\right]=2\left[-\delta_j-\frac{s\partial_s K_{\delta_j}(s)}{K_{\delta_j}(s)}\right]\\
&=
-2\frac{s^{1-\delta_j} \partial_s(s^{\delta_j} K_{\delta_j}(s))}{K_{\delta_j}(s)}\geq 0,
\end{split}\end{equation*}
using Property 3.71 in \cite{Watson} which implies  $\partial_s(s^{\delta_j} K_{\delta_j}(s))\leq 0$.
By the crucial hypothesis $\delta>0$, the term $J_2$ has a sign and can be dropped. From the estimate for $J_1$ we have that
$$\|\omega_1\|^2_{L^2_{\delta}(d\tau)}\leq C\|\psi\|^2_{L^2_{\delta}(d\tau)},$$
for functions supported in $(\tau_0,+\infty)$ but now this constant $C$ is independent of $\tau_0$, as desired.\\
\end{proof}

Finally, taking Fourier transform back will complete the proof of Proposition \ref{prop-not-so-easy}. If $\re\delta_0=0$, then one needs to consider Bessel functions with complex argument. Nevertheless, modifications are minimal.

\section{Injectivity of the model operator}\label{section:injectivity}

Let $\mu$ be a weight satisfying
\begin{equation}\label{choose-mu}
\tfrac{p}{2}-\tfrac{n-4}{4}\leq \re(\chi_{0,+}^{(0)})<\mu<\chi_{1,+}^{(0)}.
\end{equation}
We let $\mathcal L_1$ to be the variable coefficient operator which is given by \eqref{mathcalL} evaluated at $\ep=1$, that is,
\begin{equation}\label{operator:L1}
\mathcal L_1[\varphi]=\mathcal A_0^1(r) \varphi +  \frac{\mathcal A_1^1(r)}{r}\partial_r  \varphi +A_2^1(r)\partial_{rr} \varphi+\frac{\mathcal A_3^1(r)}{r^2}\Delta_\theta \varphi+\mathcal A_4^1(r) \Delta_z \varphi.
\end{equation}
Note that it is equivalent to work with $\mathcal L_1$ or the normalized $\mathcal L'_1$.

The aim of this section is to prove:

\begin{pro}\label{L1-injective}
The only solution $\varphi\in\mathcal C^{2,\alpha}_{\mu,0}(\mathbb R^n\setminus\mathbb R^p)$ of
\begin{equation}\label{equation50}
\mathcal L_1 \varphi=0\quad \text{in }\mathbb R^n\setminus\mathbb R^p
\end{equation}
is $\varphi\equiv 0$.
\end{pro}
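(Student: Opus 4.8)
The plan is to separate variables on the cross–section $\mathbb S^{N-1}$, reduce to a family of one–dimensional problems in $r$, and then exploit the position of the indicial roots from Proposition~\ref{prop:indicial} together with the $L^2$–estimates of Section~\ref{section:Fredholm}. I would first expand $\varphi=\sum_{j\ge0}\varphi_j(r,z)\,e_j(\theta)$ in the spherical harmonics \eqref{spherical-harmonics}. Since $\mathcal L_1$ in \eqref{operator:L1} commutes with the projection onto each eigenspace, every $\varphi_j$ lies in $\mathcal C^{2,\alpha}_{\mu,0}$ and solves the reduced equation in which $\Delta_\theta$ is replaced by $-\lambda_j$, so it suffices to prove $\varphi_j\equiv0$ for each $j$. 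Passing to the variable $t=-\log r$ and conjugating as in \eqref{change:phiw}, the hypothesis $\mu>\re(\chi_{0,+}^{(0)})$ of \eqref{choose-mu} becomes membership in a weighted space $L^2_\delta$ with $\delta>0$ not an indicial root — precisely the regime of Proposition~\ref{prop-not-so-easy}. At infinity, \eqref{chi1+} and the monotonicity \eqref{indicial-infinity0} give $\chi_{j,+}^{(\infty)}>0$ for every $j\ge1$, so the weight $0$ at infinity forces $\varphi_j$ to decay as $r\to\infty$ as well.

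For the mode $j=0$ the hypothesis \eqref{choose-mu} together with \eqref{indicial-zero} gives $\re(\chi_{0,\pm}^{(0)})\le\tfrac p2-\tfrac{n-4}{4}<\mu$, so $\varphi_0$ would have to vanish near $\Lambda$ faster than either fundamental solution of the reduced edge equation; uniqueness for operators with a regular singular point (equivalently, invertibility of the normal operator on weights above all indicial roots) then forces $\varphi_0\equiv0$. This is consistent with the fact that the radial solutions of $\mathcal L_1\varphi=0$ include the dilation Jacobi field of Remark~\ref{remark:dilation-inv}, whose leading behaviour at $\Lambda$ is $r^{\re(\chi_{0,+}^{(0)})}$ and is therefore excluded by $\mu$.

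For the modes $j\ge1$ only $\chi_{j,-}^{(0)}<\mu<\chi_{j,+}^{(0)}$, so the weight kills the $r^{\chi_{j,-}^{(0)}}$ component but a priori $\varphi_j$ could still behave like $r^{\chi_{j,+}^{(0)}}$ at $\Lambda$. To exclude this I would write the reduced operator in the divergence form \eqref{HH} (with $\Delta_\theta\mapsto-\lambda_j$), multiply by $\mathcal H^1\varphi_j$ and integrate over $(0,\infty)\times\mathbb R^p$. Since $\chi_{j,+}^{(0)}\ge\chi_{1,+}^{(0)}=p+1-\tfrac{n-4}{4}$ (cf. the computation of the indicial roots in Section~\ref{section:explicit}), the decay at $\Lambda$ annihilates the boundary term at $r=0$, and the decay at infinity the one at $r=\infty$; integrating by parts in $r$ and $z$ yields
\begin{equation*}
\int\mathcal H^1_1(\partial_r\varphi_j)^2+\lambda_j\int\mathcal H^1\frac{\mathcal A_3^1}{r^2}\varphi_j^2+\int\mathcal H^1\mathcal A_4^1|\nabla_z\varphi_j|^2=\int\mathcal H^1\mathcal A_0^1\varphi_j^2 .
\end{equation*}
Because $\mathcal H^1_1,\mathcal H^1,\mathcal A_3^1,\mathcal A_4^1>0$ and $\lambda_j\ge N-1$, the left–hand side is coercive; a Hardy inequality on the half–line with weight $\mathcal H^1$ lets $\lambda_j\int\mathcal H^1 r^{-2}\varphi_j^2$ dominate the bounded potential contribution $\int\mathcal H^1\mathcal A_0^1\varphi_j^2$, using the explicit limits $\beta^{(0)}_0,\beta^{(\infty)}_0$ from Proposition~\ref{prop:expression}. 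Hence every term vanishes and $\varphi_j\equiv0$. In particular the $j=1$ kernel elements generated by translation (rotation) invariance in Remark~\ref{remark:rotation-invariance}, whose behaviour at $\Lambda$ is $r^{\chi_{1,-}^{(0)}}=r^{-1-\frac{n-4}{4}}$, are excluded by the weight.

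The main obstacle is the last step: one must verify carefully that the boundary terms of the energy identity genuinely vanish — combining the exponents $\chi_{j,+}^{(0)}>\mu$ and $\chi_{j,-}^{(\infty)}<0$ with the asymptotics of the integrating factors $\mathcal H^1_1,\mathcal H^1$ at both ends — and, more importantly, that the coercive left–hand side really absorbs $\int\mathcal H^1\mathcal A_0^1\varphi_j^2$ \emph{uniformly in} $j\ge1$; equivalently, that $\mathcal L_1$ has no ``zero resonance'' in the class $\mathcal C^{2,\alpha}_{\mu,0}$. This is exactly where the positivity of $\mathcal A_2^1,\mathcal A_3^1,\mathcal A_4^1$, the sign hypothesis $\delta>0$, and — when $\chi_{0,\pm}^{(0)}$ are complex — the Bessel–function monotonicity used in Lemma~\ref{lemma:tau0} come into play; the complex case itself requires only the cosmetic modifications already indicated at the end of Section~\ref{section:Fredholm}.
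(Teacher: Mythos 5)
The reduction steps in your plan match the paper's: spherical harmonic decomposition, Fourier transform in $z$, separate treatment of $j=0$ using $\mu>\re(\chi_{0,+}^{(0)})$. Where you depart from the paper is the treatment of $j\ge 1$: you propose a single uniform energy/Hardy argument, whereas the paper splits into $1\le j\le N$ (where $\lambda_j=N-1$) and $j>N$ (where $\lambda_j\ge 2N$), and uses the structure of the problem rather than coercivity. For $j=1,\dots,N$ the paper produces the explicit rotational Jacobi field $\varphi_\diamond=\partial_r U_1$ of Remark~\ref{remark:rotation-invariance}, shows it is not in $\mathcal C^{2,\alpha}_{\mu,0}$, and argues by a Wronskian/connection argument: any other kernel element with the allowed $r^{\chi_{1,+}^{(0)}}$ behavior at $0$ must share the decaying indicial behavior $r^{\chi_{1,-}^{(\infty)}}$ at $\infty$, so a nontrivial linear combination with $\varphi_\diamond$ decays too fast at $\infty$, which contradicts the classification of the indicial roots there. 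For $j>N$ the paper compares with the (positive) ground state $\varphi_0$ of $\mathcal L_{1,0}$ on the nodal set $\{\varphi_j>0\}$. Both arguments exploit precisely the criticality of $\lambda_1=N-1$: there is a kernel element sitting exactly at that eigenvalue, just outside the weighted space, so there is \emph{no slack} at $j=1,\dots,N$ to absorb the potential.

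This is where your proposal has a genuine gap, which you yourself flag: the claimed coercivity. The Hardy term $\lambda_j\int\mathcal H^1\tfrac{\mathcal A_3^1}{r^2}\varphi_j^2$ has weight $\mathcal A_3^1(r)/r^2\to 0$ as $r\to\infty$ (since $\mathcal A_3^1\to\beta_3^{(\infty)}$), while the potential $\mathcal A_0^1(r)\to\beta_0^{(\infty)}$ is merely bounded and, as a necessary consequence of the existence of the dilation Jacobi field $\varphi_\sharp$ at $j=0$, it cannot be $\le 0$ everywhere. So the pointwise comparison you need fails for large $r$, and an integrated Hardy inequality uniform in $j\ge 1$ (in particular for $j=1,\dots,N$, where $\lambda_j$ is as small as possible) is exactly what the existence of $\varphi_\diamond$ prevents from holding with room to spare. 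Filling the gap would essentially force you to re-introduce the asymptotic/ODE information at $r=0$ and $r=\infty$, i.e., the Jacobi field and indicial-root analysis that the paper uses directly. (A secondary issue, though less central: integrating over $(0,\infty)\times\mathbb R^p$ is problematic because $\mathcal C^{2,\alpha}_{\mu,0}$ imposes no decay in $z$; the paper avoids this by working pointwise in the Fourier variable $\zeta$ and passing to $\zeta\ne0$ by the continuity argument of \cite[Proposition 4]{Mazzeo-Pacard}.)
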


Before we give the proof of the Proposition, let us study first the limiting behavior as $r\to 0$ and $r\to \infty$.

\subsection{The normal operators $\mathcal L^{(0)}$ and $\mathcal L^{(\infty)}$}

Consider the constant coefficient operators $\mathcal L^{(0)}$ and $\mathcal L^{(\infty)}$ on $\mathbb R^n\setminus \mathbb R^p$;  precise formulas were given in \eqref{L0} and \eqref{Linfty}.

\begin{pro}\label{model-injective} Any solution $\varphi\in\mathcal C^{2,\alpha }_{\mu,0}(\mathbb R^n\setminus \mathbb R^p)$ of $\mathcal L^{(0)}  \varphi=0$ must vanish identically.
\end{pro}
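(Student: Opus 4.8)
The plan is to reduce the PDE \eqref{L0} to a family of ODEs in the radial variable $r$ via the usual separation of variables, exploiting that $\mathcal L^{(0)}$ has coefficients that are powers of $r$ only (in particular it is invariant under dilations in $r$ combined with the natural scaling, and it is translation-invariant in $z$). First I would take the Fourier transform in $z\in\mathbb R^p$ and decompose in spherical harmonics $e_j(\theta)$ on $\mathbb S^{N-1}$; writing $\varphi(r,\theta,z)=\sum_j\int \hat\varphi_j(r,\zeta)\,e_j(\theta)\,e^{i\zeta\cdot z}\,d\zeta$, the equation $\mathcal L^{(0)}\varphi=0$ becomes, for each $j$ and each $\zeta$, the scalar ODE
\begin{equation*}
\beta_2^{(0)}\,\hat\varphi_j'' + \frac{\beta_1^{(0)}}{r}\,\hat\varphi_j' + \Big(\beta_0^{(0)} - \frac{\beta_3^{(0)}\lambda_j}{r^2} - \beta_4^{(0)}|\zeta|^2\Big)\hat\varphi_j = 0,
\end{equation*}
which (after rescaling $r$ by $|\zeta|$ when $\zeta\neq0$) is a Bessel-type equation, and a pure Euler equation when $\zeta=0$.

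Next I would analyze the solution space of each of these ODEs together with the two growth constraints imposed by $\varphi\in\mathcal C^{2,\alpha}_{\mu,0}(\mathbb R^n\setminus\mathbb R^p)$: decay like $r^{\mu}$ as $r\to 0$ and (because of the $\mu_2=0$ weight, i.e.\ the understanding of Remark~\ref{remark:infinity}) boundedness at $r\to\infty$. For $\zeta\neq 0$ the relevant solutions are modified Bessel functions $I_\nu(|\zeta|r)$ and $K_\nu(|\zeta|r)$ (times a power of $r$ coming from the first-order term): $K_\nu$ blows up like $r^{-\nu-\text{const}}$ as $r\to 0$ while $I_\nu$ grows exponentially as $r\to\infty$, so no nontrivial combination lies in the prescribed space; the key inputs here are the monotonicity and location of the indicial roots $\chi^{(0)}_{j,\pm}$ from Proposition~\ref{prop:indicial} together with the choice \eqref{choose-mu} of $\mu$, which guarantees that $\mu$ sits strictly between consecutive indicial roots and hence that the $r\to0$ asymptotics of any nonzero solution violate the weight. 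For $\zeta=0$ the Euler equation has solutions $r^{-\chi^{(0)}_{j,\pm}}$, and again \eqref{choose-mu} forces $\hat\varphi_j(r,0)\equiv0$ for every $j$. Assembling these mode-by-mode vanishing statements and inverting the transforms gives $\varphi\equiv0$.

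The main obstacle I expect is the bookkeeping at the two ends simultaneously, i.e.\ confirming that the weight interval $(\re(\chi^{(0)}_{0,+}),\chi^{(0)}_{1,+})$ at $r\to 0$ is compatible with the boundedness requirement at $r\to\infty$ \emph{for all} spherical modes $j$ at once; this is exactly where Proposition~\ref{prop:indicial} does the heavy lifting, since it pins down $\chi^{(0)}_{1,-}=-1-\tfrac{n-4}{4}$ and the sign/monotonicity relations \eqref{indicial-zero}, so that the ``low'' roots contributing unbounded-at-infinity solutions are separated from the admissible range. A secondary technical point is the borderline case $\re(\chi^{(0)}_{0,\pm})=\tfrac p2-\tfrac{n-4}{4}$ (the complex indicial roots), where one must check that a genuinely oscillatory solution $r^{-\frac p2+\frac{n-4}{4}}(c_1\cos(\tau\log r)+c_2\sin(\tau\log r))$ still fails to meet the strict weight $\mu>\re(\chi^{(0)}_{0,+})$; this is immediate since $\mu$ is strictly larger than the real part, but it deserves an explicit sentence. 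Modulo these checks, the argument is the standard Mazzeo-type indicial-root exclusion and should go through cleanly.
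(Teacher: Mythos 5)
Your proposal matches the paper's proof: both decompose in spherical harmonics on $\mathbb S^{N-1}$, Fourier-transform in $z$, reduce to a modified Bessel ODE for each mode, and then exclude any nontrivial solution by noting that (after ruling out $I_{\delta_j}$ via its exponential growth as $r\to\infty$) the remaining $K_{\delta_j}$ branch is too singular (or merely $O(1)$-oscillatory in the complex-root case) as $r\to 0$ to lie in $\mathcal C^{2,\alpha}_{\mu,0}$. One small caveat worth noting is that your parenthetical ``pure Euler equation when $\zeta=0$'' is a shortcut — in the $L^{(0)}$ (cylindrical, conjugated) picture the paper uses, the $\zeta=0$ mode is indeed Euler, and in any case the $\zeta=0$ fiber is handled automatically by continuity in $\zeta$ — but this does not affect the correctness of the argument.
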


\begin{proof}
First remark that it is enough to study injectivity of the projected operators
\begin{equation*}
\mathcal L^{(0)}_j[\varphi]=\beta_0^{(0)} \varphi +  \frac{\beta_1^{(0)}}{r}\partial_r  \varphi +\beta_2^{(0)}\partial_{rr} \varphi-\lambda_j\frac{\beta_3^{(0)}}{r^2} \varphi+\beta_4^{(0)} \Delta_z \varphi=0.
\end{equation*}
Recalling the discussion in Section \ref{section:Fredholm}, this can be reduced to proving injectivity for each equation
\begin{equation}\label{eq:j}
\partial_{\tau\tau}\omega-\delta_j^2\omega-e^{-2\tau}\omega=0, \quad \text{for }\omega=\omega(\tau), \quad\tau\in\mathbb R, \quad j=0,1,\ldots
\end{equation}
under the assumption that $\omega$ has the behavior
\begin{equation*}
\omega(r)=\left\{\begin{split}
&O(r^{\updelta})\quad\text{as}\quad r\to 0,\\
&O(r^{-\frac{p}{2}+\frac{n-4}{4}}) \quad\text{as}\quad r\to\infty.
\end{split}\right.
\end{equation*}
Here we have defined
$$\updelta:=\mu-\tfrac{p}{2}+\tfrac{n-4}{4}>0.$$

Let us look then at equation \eqref{eq:j}. For any $j$, this is a Bessel ODE which has two linearly independent solutions, given by the modified Bessel functions of second kind $K_{\delta_j}(r)$ and $I_{\delta_j} (r)$ in the variable $r=e^{-t}$. Since the asymptotic behavior of the Bessel functions is well known, any solution with such behavior as $r\to 0$ cannot be bounded as $r\to\infty$, which is not possible because the choice of function space.
\end{proof}

\begin{pro}\label{model-injective-infty}
Similarly, any solution $\varphi\in\mathcal C^{2,\alpha }_{\mu,0}(\mathbb R^n\setminus \mathbb R^p)$ of
$$\mathcal L^{(\infty)}  \varphi=0$$
 must vanish identically.
\end{pro}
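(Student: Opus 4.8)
The plan is to run the argument of Proposition \ref{model-injective} almost verbatim, now with $\mathcal{L}^{(\infty)}$ and its indicial data at $r\to\infty$ in place of $\mathcal{L}^{(0)}$. Decomposing $\varphi$ over the spherical harmonics $\{e_j(\theta)\}$ of $\mathbb{S}^{N-1}$ reduces the problem to the injectivity of each projected operator
\begin{equation*}
\mathcal{L}^{(\infty)}_j[\varphi]=\beta_0^{(\infty)}\varphi+\frac{\beta_1^{(\infty)}}{r}\partial_r\varphi+\beta_2^{(\infty)}\partial_{rr}\varphi-\lambda_j\frac{\beta_3^{(\infty)}}{r^2}\varphi+\beta_4^{(\infty)}\Delta_z\varphi=0
\end{equation*}
on $\mathcal{C}^{2,\alpha}_{\mu,0}(\mathbb{R}^n\setminus\mathbb{R}^p)$. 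As in Section \ref{section:Fredholm} I would pass to the conjugate operator and the Fermi variable $t=-\log r$, take the Fourier transform in $z\in\mathbb{R}^p$ (harmless for a.e. frequency $\zeta$, once the Hölder solution is upgraded by interior elliptic estimates), and apply the change of variables $\tfrac{d_4}{d_2}|\zeta|^2e^{-2t}=e^{-2\tau}$. This is admissible because $d_2$ and $d_4$ are the limiting values, as $r\to\infty$, of the $\partial_{tt}$-- and $\Delta_z$--coefficients of $L_\ep$ from \eqref{a's}, namely of $-v_\ep^{-2k+2}S_1$ and $-v_\ep^{-2k+2}S_3$, which are strictly negative since $S_1,S_3>0$ by the positive-cone condition; hence $d_2,d_4$ have the same sign. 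The upshot is that, for each $j$ and a.e. $\zeta\neq 0$, the mode $\omega=\omega(\tau)$ satisfies, up to a negative constant factor, the analogue of \eqref{eq:j},
\begin{equation*}
\partial_{\tau\tau}\omega-(\delta_j^{(\infty)})^2\,\omega-e^{-2\tau}\omega=0,
\end{equation*}
where the conjugated indicial roots $\delta_j^{(\infty)}$ of $\mathcal{L}^{(\infty)}$ at $r\to\infty$ are all real, by Proposition \ref{prop:indicial}.

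From this point the conclusion is identical to that of Proposition \ref{model-injective}. The two independent solutions are the modified Bessel functions $I_{\delta_j^{(\infty)}}(e^{-\tau})$ and $K_{\delta_j^{(\infty)}}(e^{-\tau})$. Exactly as there, the bounds defining $\mathcal{C}^{2,\alpha}_{\mu,0}$ translate, through \eqref{change:phiw} and the Fourier transform, into $\omega=O(r^{\updelta})$ as $r\to 0$ with $\updelta:=\mu-\tfrac p2+\tfrac{n-4}{4}>0$ (by \eqref{choose-mu}) and $\omega=O(r^{-p/2+(n-4)/4})$ as $r\to\infty$. The first condition forces $\omega$ to be a multiple of the subdominant branch $I_{\delta_j^{(\infty)}}$, which however grows exponentially as $e^{-\tau}\to\infty$, i.e. $r\to\infty$, and so cannot meet the second (polynomial) bound unless $\omega\equiv 0$. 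Since this holds for every $j$ and a.e. $\zeta$, and $\{\zeta=0\}$ is a null set in $\mathbb{R}^p$, we obtain $\varphi\equiv 0$.

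The proof is essentially routine once the reduction is in place; the one point requiring attention is the bookkeeping of the successive substitutions — from $\varphi$ to $w$ to $\widetilde w$ via \eqref{change:phiw}, then the Fourier transform in $z$, then the dilation $e^{-\tau}\propto|\zeta|\,r$ — so that the two weight conditions of $\mathcal{C}^{2,\alpha}_{\mu,0}$ come out, respectively, as the bound $\omega=O(r^{\updelta})$ with $\updelta>0$ at $r=0$ and a sub-exponential (polynomial) bound at $r=\infty$, together with the standard justification for passing to the Fourier side on the weighted spaces. As a byproduct the same computation records that $0$ and $\mu$ are non-indicial weights for $\mathcal{L}^{(\infty)}$ — using $\chi^{(\infty)}_{0,-}<\tfrac p2-\tfrac{n-4}{4}<\mu$ and $\chi^{(\infty)}_{1,+}>0$ from Proposition \ref{prop:indicial} together with \eqref{choose-mu} — a fact needed in the proof of Proposition \ref{L1-injective}.
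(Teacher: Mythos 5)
Your proof is correct and follows the same route as the paper's (spherical harmonic projection, pass to cylindrical variables, Fourier transform in $z$, Bessel asymptotics of the resulting mode ODE), and you correctly identify the role of Proposition \ref{prop:indicial} at both ends. One point to tighten: the conjugation \eqref{change:phiw} was designed so that the indicial roots of $\widetilde{L}^{(0)}$ are centered at zero, but the indicial roots of $\mathcal L^{(\infty)}$ sit around $-\tfrac{n-4}{4}+\tfrac{p(n-3)}{2(n-1)}\ne \tfrac p2-\tfrac{n-4}{4}$; so after this conjugation and the change of variable a first-order drift term survives, and the mode equation is not literally $\partial_{\tau\tau}\omega-(\delta_j^{(\infty)})^2\omega-e^{-2\tau}\omega=0$. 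The cleanest fix is to argue directly with the asymmetric roots $\chi^{(\infty)}_{j,\pm}$: the exponentially decaying Bessel branch behaves like $r^{\chi^{(\infty)}_{j,-}}$ as $r\to0$, and it is exactly \eqref{indicial-infinity-first} together with \eqref{choose-mu} that gives $\chi^{(\infty)}_{j,-}\le\chi^{(\infty)}_{0,-}<\tfrac p2-\tfrac{n-4}{4}\le\mu$, hence that branch is excluded; the other branch grows exponentially at $r\to\infty$ and is excluded as you say. You do invoke this inequality at the end, so your argument is in substance the same as the paper's, which likewise hinges on \eqref{indicial-infinity-first}.
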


\begin{proof}
It is the same proof as in Proposition \ref{model-injective}, using the asymptotics of the Bessel functions, but with the new indicial roots. First, for each $j$, there are two solutions. However, the one that is not exponentially growing as $r\to \infty$ is not in the kernel thanks to condition \eqref{indicial-infinity-first}.
\end{proof}

\subsection{Beginning of the proof of Proposition \ref{L1-injective}}

The first observation is that, since $U_\epsilon$ only depends on the radial variable, the coefficients $\mathcal A_\ell^1$, $\ell=0,1,2,3,4$ only depend on $r$ as well, so one can project over spherical harmonics and show injectivity for each operator
\begin{equation*}\mathcal L_{1,j} \varphi=0,\quad \varphi=\varphi(r,z), \,r>0,z\in\mathbb R^p,
\end{equation*}
for $j=0,1,\ldots$. Here
\begin{equation}\label{operator:L1j}
\mathcal L_{1,j}\varphi=\mathcal A_0^1(r) \varphi +  \frac{\mathcal A_1^1(r)}{r}\partial_r  \varphi +A_2^1(r)\partial_{tt} \varphi-\lambda_j\frac{\mathcal A_3^1(r)}{r^2}\varphi+\mathcal A_4^1(r) \Delta_z \varphi.
\end{equation}

Next, as we did in Section \ref{section:Fredholm}, Fourier transform in the variable $z$ reduces to the problem to study the operator
\begin{equation*}
 J_j \omega:=\mathcal A_0^1(r) \omega +  \frac{\mathcal A_1^1(r)}{r}\partial_r  \omega +A_2^1(r)\partial_{rr} \omega-\lambda_j\frac{\mathcal A_3^1(r)}{r^2}\omega-\mathcal A_4^1(r) |\zeta|^2 \omega=0.
\end{equation*}
This is an ODE in the variable $r\in\mathbb R$ for each fixed $\zeta$.  Indicial roots for this problem were given in Proposition \ref{prop:indicial}.
We will consider the different values of $j$ in the following paragraphs, and conclude the proof in Subsection \ref{subsection:higher-modes}.

The first observation is that, for $j=0$, our choice of weight $\mu>\re(\chi_{0,+}^{(0)})$ prevents having any solution in the kernel with behavior $O(r^\mu)$ as $r\to 0$.\\

Next, in  Sections  \ref{subsection:radial-nondegeneracy} and \ref{subsection:rotational-nondegeneracy} we try to understand the effect of symmetries of the equation. It will be useful to consider  the reduced operator (when $p=0$  so there is no variable $z$) and $\ep=1$, which is given by
\begin{equation*}
\mathfrak L_{1}\varphi=\mathcal A_0^1(r) \varphi +  \frac{\mathcal A_1^1(r)}{r}\partial_r  \varphi +\mathcal A_2^1(r)\partial_{rr} \varphi+\frac{\mathcal A_3^1(r)}{r^2}\Delta_\theta\varphi,
\end{equation*}
and its spherical harmonic projection
\begin{equation*}
\mathfrak L_{1,j}\varphi=\mathcal A_0^1(r) \varphi +  \frac{\mathcal A_1^1(r)}{r}\partial_r  \varphi +\mathcal A_2^1(r)\partial_{rr} \varphi-\lambda_j\frac{\mathcal A_3^1(r)}{r^2}\varphi.
\end{equation*}

\subsection{Non-degeneracy in the radial direction}\label{subsection:radial-nondegeneracy}

Even if it is not needed in our discussion, let us take a detour to characterize the kernel of the operator $\mathfrak L_{1,0}$ (that is, for $j=0$), given by
\begin{equation*}
\mathfrak L_{1,0}[\varphi]=\mathcal A_0^1(r) \varphi +  \frac{\mathcal A_1^1(r)}{r}\partial_r  \varphi +\mathcal A_2^1(r)\partial_{rr} \varphi, \quad\varphi=\varphi(r),
\end{equation*}
and prove that it is non-degenerate. We start with an immediate observation:

\begin{remark}\label{remark:dilation-inv}
The $\sigma_2$-equation is dilation invariant. This implies that the function
$$\varphi_\sharp:=r\partial_r U_1+\tfrac{n-4}{4}U_1$$
 is a solution to the linear problem  $\mathfrak L_{1,0} \varphi_\sharp=0$.
\end{remark}

We will show in the next Lemma that this is actually the only possible solution. For this, it is better to go back to the tilde-notation consider the conjugate operator $\widetilde{\mathfrak L}_{1,0}$ and the corresponding solution $\widetilde w_\sharp$.

\begin{lemma}
Any other solution to
\begin{equation}\label{eq:nondegeneracy}
\mathfrak L_{1,0}\widetilde w=0
\end{equation}
that decays to zero as  $t\to \infty$ must be a multiple of $\widetilde w_\sharp$.
\end{lemma}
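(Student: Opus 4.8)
The plan is to observe that, after projection onto the $j=0$ mode (and with $p=0$, so that no $z$--variable is present), $\mathfrak{L}_{1,0}$ --- equivalently, in the notation of the preceding sentence, its conjugate $\widetilde{\mathfrak L}_{1,0}$ --- is a genuine second order linear ODE in the single variable $t=-\log r$. Its solution space is therefore two--dimensional, one dimension of which is already spanned by $\widetilde w_\sharp$; the whole content of the Lemma is that the complementary solution fails to decay as $t\to\infty$.

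First I would make $\widetilde w_\sharp$ explicit. By Remark \ref{remark:dilation-inv}, $\varphi_\sharp=r\partial_r U_1+\tfrac{n-4}{4}U_1$ lies in $\ker\mathfrak{L}_{1,0}$. Writing $U_1(r)=r^{-\frac{n-4}{4}}v_1(-\log r)=e^{\frac{n-4}{4}t}v_1(t)$ and using $r\partial_r=-\partial_t$, the leading terms cancel and one is left with $\varphi_\sharp=-\,r^{-\frac{n-4}{4}}\dot v_1(t)$; passing to the conjugated unknown --- whose weight (the integrating factor of \eqref{L-tilde-epsilon}) converges to a positive constant as $t\to\infty$, since the coefficients do --- this says that $\widetilde w_\sharp$ is comparable to $-\dot v_1$ near $t=+\infty$. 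In particular $\widetilde w_\sharp\not\equiv0$ because $U_1$ is non--constant in $r$. Since $v_\infty$ is a rest point of the autonomous ODE \eqref{equation-v} and $v_1(t)\to v_\infty$ by Proposition \ref{ODE-study}, also $\dot v_1(t)\to0$; in fact the decay occurs at the rate dictated by the first indicial root of Proposition \ref{prop:indicial} at $t=+\infty$ (a genuine decay even when $\chi_{0,\pm}^{(0)}$ are complex, in which case the approach to $v_\infty$ is spiral rather than monotone). Hence $\widetilde w_\sharp$, together with $\partial_t\widetilde w_\sharp$, tends to $0$ as $t\to\infty$.

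To see that nothing else decays, I would use the Wronskian. In the conjugated variables the first order term is absent (compare \eqref{L-tilde-epsilon}, \eqref{tilde-L0} with $j=0$, $p=0$), so $\widetilde{\mathfrak L}_{1,0}\widetilde w=0$ has the form $\tilde a_2^1(t)\,\partial_{tt}\widetilde w+\tilde a_0^1(t)\,\widetilde w=0$ with $\tilde a_2^1$ nowhere vanishing; consequently the Wronskian $W(t)=\widetilde w_a\,\partial_t\widetilde w_b-\widetilde w_b\,\partial_t\widetilde w_a$ of any two solutions is constant in $t$. If $\widetilde w_a,\widetilde w_b$ both decay to $0$ as $t\to\infty$ then --- using that a solution of this ODE which is $o(1)$ automatically decays together with its derivative, a fact one reads off from the indicial structure at $t=+\infty$ --- we get $W(t)\to0$, hence $W\equiv0$, hence $\widetilde w_a,\widetilde w_b$ are linearly dependent. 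Thus the space of solutions decaying as $t\to\infty$ has dimension at most one; since it contains the nonzero element $\widetilde w_\sharp$, it equals $\mathbb R\,\widetilde w_\sharp$, which is the assertion. (Equivalently, reduction of order produces the companion solution $\widetilde w_\sharp\int^t(\tilde a_2^1\,\widetilde w_\sharp^2)^{-1}$, which grows like $e^{+\delta_0 t}$ when $-\delta_0<0$ is the decay rate of $\widetilde w_\sharp$, hence never lies in the decaying class.)

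The one step that genuinely requires care is the identification of $\widetilde w_\sharp$ with the fast--decaying (stable) solution at $t=+\infty$, rather than with a slowly--decaying or growing one: this is exactly where the precise ODE analysis of \cite{Gonzalez-Mazzieri} is used, through the sign and size of the rate at which $v_1$ tends to $v_\infty$. Everything else --- the cancellation producing $\varphi_\sharp=-r^{-\frac{n-4}{4}}\dot v_1$, the constancy of the Wronskian, and the passage from $o(1)$ to $\mathcal C^1$--decay --- is routine ODE bookkeeping.
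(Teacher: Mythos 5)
Your proof is correct and takes essentially the same approach as the paper's: both hinge on the constancy of the Wronskian in the conjugated variables (where the first-order term is absent) together with the decay of the competing solution and $\widetilde w_\sharp$ at $t\to+\infty$. Your conclusion is actually more direct than the paper's, which matches asymptotic expansions order by order and invokes analytic continuation, whereas you simply note that $W(t)\to 0$ forces $W\equiv 0$ and hence linear dependence.
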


\begin{proof}
Let $\widetilde w_1,\widetilde w_2$ be two solutions of \eqref{eq:nondegeneracy} that decay to zero as $t\pm\infty$, that is, of the form
$\widetilde w_i=\alpha_i (1+o_i(1))e^{\varsigma_i t}$, $i=1,2$, as $t\to +\infty$, $\alpha_i\neq 0$.
Define its Wronskian $W(t)$ by
$$W(t)=\widetilde w_1'\widetilde w_2-\widetilde w_1 \widetilde w_2'=\alpha_1 \alpha_2(\varsigma_1-\varsigma_2+o(1))e^{(\varsigma_1+\varsigma_2)t}.$$
Since the Wronskian is constant, then we must have $\varsigma_1=\varsigma_2$. Let us assume, by rescaling, that $\alpha_1=\alpha_2=1$. Now look at the next order. For this, we write
$\widetilde w_i=e^{\varsigma_i t}+\alpha_i^1(1+o_i(1))e^{\varsigma_i^{(1)} t} $, $i=1,2$, as $t\to +\infty$, $\alpha_i^{(1)}\neq 0$. The same argument will yield that $\varsigma_1^{(1)}=\varsigma_2^{(1)}$.
 Inductively, we will be able to conclude that $w_1\equiv w_2$, since  we have analytic continuation for the solutions of this ODE.
\end{proof}

\subsection{Rotational invariance implies non-degeneracy}\label{subsection:rotational-nondegeneracy}

Now we use rotational invariance to show injectivity for the first non-zero mode, this is, for $\lambda_1=\ldots=\lambda_N=N-1$ (recall the notation in \eqref{spherical-harmonics}). Assume that $p=0$ for now and study $\mathfrak L_{1,j}$ for $j=1,\ldots,N$.

\begin{remark}\label{remark:rotation-invariance}
Notice first that  rotational invariance yields that
\begin{equation*}
\mathfrak L_1 (\partial_j U_1)=0, \quad j=1,\ldots,N.
\end{equation*}
Since $\partial_j = e_j \partial_r$, then $\varphi_\diamond:=\partial_r U_1$ belongs to the kernel of $\mathfrak L_{1,j}$ for each $j=1,\ldots,N$.
Moreover,
\begin{equation*}
\varphi_\diamond(r)\asymp
r^{-\frac{n-4}{4}-1}\quad\text{as}\quad r\to 0,
\end{equation*}
and
\begin{equation*}
\varphi_\diamond(r)\sim r^{-\alpha_0-\frac{n-4}{4}-1} \quad\text{as}\quad r\to\infty.
\end{equation*}
In particular, this shows \eqref{chi1-}.
\end{remark}

By contradiction, assume that $\varphi_j$ is a solution to $\mathfrak L_{1,j}\varphi_j=0$ in the space $\mathcal C^{2,\alpha }_{\mu,0}$. Looking at \eqref{chi1+}, one knows that it behaves like
\begin{equation*}
\varphi_j(r)\sim r^{-\alpha_0-\frac{n-4}{4}-1}\quad \text{as}\quad r\to\infty.
\end{equation*}
We also have, by our choice of $\mu$ in \eqref{choose-mu}, that
\begin{equation*}
\varphi_j\sim r^{\chi^{(0)}_{j,+}}\quad\text{as}\quad r\to 0.
\end{equation*}
Then there is a (non-trivial) linear combination of $\varphi_\diamond$ and $\varphi_j$ that decays faster than $r^{-\alpha_0-\frac{n-4}{4}-1}$ as $r\to\infty$. Looking at the different behaviors as $r\to 0$, we see that this combination is non-vanishing. Looking at the indicial roots $\chi^{(\infty)}_{j,\pm}$,  this is a contradiction since no (non-trivial) solution can decay faster at $r\to\infty$.

Now, to pass from $\mathfrak L_{1,j} $ to $\mathcal L_{1,j}$, $j=1,\ldots, N$ we need to take Fourier transform in $z$ and use the same continuity argument as in \cite[Proposition 4]{Mazzeo-Pacard}, considering the Fourier variable $|\zeta|^2$ as a parameter.

\subsection{The higher modes}\label{subsection:higher-modes}
To complete the proof of Proposition \ref{L1-injective} it remains to study the case  $j>N$.

We consider first the eigenvalue problem for $\mathcal L_{1,0}\varphi=\eta\varphi$. Although we know that 0 is an eigenvalue,  we have no information on its Morse index. Let $\eta_0$ be the first eigenvalue. It is well known (thanks to self-adjointness with respect to the scalar product \eqref{space-H}), that its corresponding eigenfunction $\varphi_0$ is strictly positive.

Now, let $\varphi_j$ be a solution to $\mathcal L_{1,j}\varphi_j=0$, for $j>N$, which can be written as $\mathcal L_{1,0}\varphi_j=\lambda_j\varphi_j$ after we have taken Fourier transform in the variable $z$. Combining this equation with $\mathcal L_{1,0}\varphi_0=\eta_0\varphi_0$ we arrive to
\begin{equation*}
(\lambda_j-\eta_0)\varphi_j\varphi_0=\varphi_0 \mathcal L_{1,0} \varphi_j-\varphi_j \mathcal L_{1,0}\varphi_0.
\end{equation*}
Integrate this expression in the set where $\{\varphi_j>0\}$ with respect to the weighted $L^2$ space from \eqref{space-H}, and use the divergence theorem to obtain that
\begin{equation*}
\begin{split}
(\lambda_j-\eta_0)&\int_{\{\varphi_j>0\}}\varphi_j\varphi_0 \mathcal H^1 \,dr=\int_{\{\varphi_j=0\}} \mathcal H^1\left\{\varphi_0 \partial_{\vec\nu } \varphi_j- \varphi_j \partial_{\vec\nu} \varphi_0\right\}\,ds.
\end{split}
\end{equation*}
Here $\vec\nu$ is the exterior normal to the integration set. It is easy to check that $\partial_{\vec\nu} \varphi_j<0$. Then, from the above formula we reach a contradiction unless
$$\int_{\{\varphi_j>0\}}\varphi_j\varphi_0 \mathcal H^1\,dr=0.$$
In particular, this implies that $\varphi_j\equiv 0$, as desired.

\section{Injectivity of $\mathbb L_\epsilon$}

Let $\mu$ be a weight as in \eqref{choose-mu}.  We will show first injectivity in H\"older spaces. Again, it is equivalent to work with $\mathbb L'_\ep$ or $\mathbb L_\ep$.

\begin{pro}\label{prop:uniform-injectivity}
There exists $\epsilon_0$ such that for all $0<\ep<\ep_0$, the operator $\mathbb L'_\ep$ is injective in $\mathcal C^{2,\alpha}_\mu (M\setminus\Lambda)$.
\end{pro}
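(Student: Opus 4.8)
The proof goes by a contradiction/blow‑up argument, modelled on \cite[Proposition 4]{Mazzeo-Pacard}. Suppose the statement fails; then there is a sequence $\epsilon_i\downarrow 0$ and functions $\varphi_i\in\mathcal C^{2,\alpha}_\mu(\Omega\setminus\Lambda)$, $\varphi_i\not\equiv 0$, with $\mathbb L_{\epsilon_i}\varphi_i=0$. First I would normalise so that $\sup_{\Omega\setminus\Lambda}|d^{-\mu}\varphi_i|=1$; by the scaling structure of the weighted H\"older spaces and interior Schauder estimates — which are uniform in $\epsilon$, as in \cite[Section 3]{Mazzeo-Pacard} — the full norm $\|\varphi_i\|_{\mathcal C^{2,\alpha}_\mu}$ then stays bounded. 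Since $d^{-\mu}\varphi_i$ extends continuously to the compact set $\overline\Omega$ and vanishes on $\partial\Omega$, the supremum is attained at an interior point $y_i$; set $r_i:=d(y_i,\Lambda)$. Passing to a subsequence, exactly one of four regimes occurs: (i) $r_i\to r_\infty>0$; (ii) $r_i\to 0$ with $r_i/\epsilon_i$ bounded and bounded away from $0$; (iii) $r_i\to 0$ with $r_i/\epsilon_i\to\infty$; (iv) $r_i/\epsilon_i\to 0$. In each regime I rescale around $y_i$ at the natural scale, use the rescaled uniform ellipticity together with Schauder estimates on unit balls to extract a limit $\varphi_\infty\not\equiv 0$, and identify the equation it solves.

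In regime (ii) — the neck scale $r\sim\epsilon$ — I would dilate the normal variable by $\epsilon_i$; the conformal covariance \eqref{rewrite} and Corollary \ref{cor-model-solution} show that $\mathbb L_{\epsilon_i}$ converges, locally uniformly on $\mathbb R^n\setminus\mathbb R^p$, to the model operator $\mathcal L_1$ of \eqref{operator:L1}, while Proposition \ref{prop:compare-operators} (and Remark \ref{remark:dependence-z}, to treat the $z$–direction by working in normal coordinates on $\Lambda$ around a fixed point) shows the extra terms $\mathcal D$ are lower order and drop out. Since $|d^{-\mu}\varphi_i|\le 1$ is scale invariant, the limit satisfies $0\neq\varphi_\infty\in\mathcal C^{2,\alpha}_{\mu,0}(\mathbb R^n\setminus\mathbb R^p)$ and $\mathcal L_1\varphi_\infty=0$, contradicting Proposition \ref{L1-injective}. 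In regime (iii) I rescale by $r_i$, which freezes the coefficients of $\mathbb L_{\epsilon_i}$ at the $r\to\infty$ asymptotics, so the limit solves $\mathcal L^{(\infty)}\varphi_\infty=0$ — contradicting Proposition \ref{model-injective-infty}. In regime (iv), rescaling by $r_i$ instead freezes them at the $r\to 0$ asymptotics, giving $\mathcal L^{(0)}\varphi_\infty=0$ — contradicting Proposition \ref{model-injective}. Finally, in regime (i) no rescaling is needed: by Remark \ref{remark:L0}, $\mathbb L_{\epsilon_i}$ converges locally uniformly on $\Omega\setminus\Lambda$ to $\mathbb L_{g_\dag}$ with its $\epsilon$-order potential removed, so $\varphi_\infty$ is a nontrivial bounded solution on $\Omega$ with zero Dirichlet data, contradicting the non-degeneracy of $g_\dag$.

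The main difficulty is the bookkeeping with the weight: I must ensure that the blow‑up limit lies in the correct space $\mathcal C^{2,\alpha}_{\mu,0}$ with a genuinely \emph{nontrivial} profile, i.e.\ that the weighted supremum does not degenerate towards $\Lambda$ (or $\partial\Omega$) under rescaling. This is precisely where the choice of $\mu$ in \eqref{choose-mu}, strictly between $\re(\chi_{0,+}^{(0)})$ and $\chi_{1,+}^{(0)}$ and avoiding every indicial root, is used: for such $\mu$ the normal operators $\mathcal L^{(0)}$, $\mathcal L^{(\infty)}$ and the model $\mathcal L_1$ have trivial kernel in $\mathcal C^{2,\alpha}_{\mu,0}$ (Propositions \ref{model-injective}, \ref{model-injective-infty}, \ref{L1-injective}), so any surviving limit is excluded. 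Once injectivity holds uniformly, combining it with the a priori estimate of Proposition \ref{prop-not-so-easy}, the semi-Fredholm property, and the duality pairing \eqref{pairing} upgrades the contradiction to the desired statement and, as a by-product, produces a right inverse $\mathbb L_\epsilon^{-1}$ bounded uniformly for $0<\epsilon<\epsilon_0$ — exactly what the fixed-point scheme for \eqref{final-equation} requires.
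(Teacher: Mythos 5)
Your proof follows the same contradiction/blow-up scheme as the paper, with the same case decomposition (away from $\Lambda$, at the $\epsilon$-neck scale, at the inner $\mathcal L^{(0)}$ scale, at the outer $\mathcal L^{(\infty)}$ scale) and the same key ingredients: Propositions \ref{L1-injective}, \ref{model-injective}, \ref{model-injective-infty} for the model operators and non-degeneracy of $g_\dag$ off the singular set. The only step you leave implicit is in regime (i): the paper explicitly invokes a removable-singularity theorem (Serrin, Mal\'y--Ziemer), using $\HH(\Lambda)=p<n-2$ and the decay $\varphi_\infty=O(d^\mu)$, to extend the limit from $\Omega\setminus\Lambda$ to a genuine weak solution on all of $\Omega$ before the non-degeneracy of $\mathbb L_{g_\dag}$ can be applied.
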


The idea is to use a contradiction argument as $\ep\to 0$ which, after rescaling, reduces the problem to the model operator $\mathcal L'_1$. This is a rather standard argument by now (see \cite{Mazzeo-Pacard} or \cite[Proposition 3.1]{Fakhi} for the scalar curvature case). Thus, assume that there exists a sequence $\{\ep_l\}$, $\ep_l\to 0$ such that
$\mathbb L'_l:=\mathbb L'_{\epsilon_l}$ is not injective, i.e., there exists
$\varphi_l\in\mC^{2,\alpha}_\mu$ with $\mathbb L'_l \varphi_l=0$. Rescaling, we can always assume that
$$\|\varphi_l\|_{\mC^{0}_\mu}=1.$$
Then there exists $y_l\in M\setminus\Lambda$ such that
\be\label{sup-achieved}1\geq d_l^{-\mu}\varphi_l(y_l)>\tfrac{1}{2},\ee
where $d_l:=d_\Lambda(y_l)$ and $d_\Lambda$ is the distance function defined in Definition \ref{def:holder-spaces}.
Up to a subsequence, we can find $y_0\in M$ such that $y_l\to y_0$.\\

We will need a preliminary convergence Lemma:

\begin{lem} \label{lemma:convergence}
Consider  $\mu>\frac{p}{2}-\frac{n-4}{4}$ not an indicial root. Given a sequence $\{\varphi_l\}$ in ${\mC^{2,\alpha}_\mu}$, if $\|\varphi_l\|_{\mC^{2,\alpha}_\mu}\leq C$, then
it has a convergent subsequence. This is still true if we only have
$\|\varphi_l\|_{\mC^\alpha_{\mu}}\leq C $ but each $\varphi_l$ is a solution of the homogeneous equation $\mathbb L'_l \varphi=0$.
\end{lem}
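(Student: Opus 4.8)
The plan is to reduce both statements to a single compactness mechanism: uniform elliptic compactness on compact subsets of $\Omega\setminus\Lambda$, glued to the decay that the weight forces near $\Lambda$. I will treat the case of a bound in $\mathcal{C}^{2,\alpha}_\mu$ first, and then explain the extra ingredient needed when only a $\mathcal{C}^\alpha_\mu$ bound is assumed. The standing hypotheses $\mu>\tfrac{p}{2}-\tfrac{n-4}{4}$ and $\mu\notin\{\delta_j^\pm\}$ are exactly what guarantees that the weighted (edge) estimates of Section \ref{section:Fredholm} apply, and they will be used in the second step.

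For the first statement, fix any $\alpha'<\alpha$ and $\mu'<\mu$. On each region $\Omega\setminus\mathcal{T}_\rho$ the hypothesis gives a uniform $\mathcal{C}^{2,\alpha}$ bound, so Arzel\`a--Ascoli yields a subsequence converging in $\mathcal{C}^{2,\alpha'}(\Omega\setminus\mathcal{T}_\rho)$; exhausting $\Omega\setminus\Lambda$ by such regions and diagonalizing produces a subsequence $\varphi_l\to\varphi_\infty$ in $\mathcal{C}^{2,\alpha'}_{\loc}(\Omega\setminus\Lambda)$. The tail near $\Lambda$ is controlled directly from the definition of the weighted norm, since
\[
\|\varphi_l\|_{\mathcal{C}^{2,\alpha'}_{\mu'}(\mathcal{T}_\rho)}\le C\,\rho^{\mu-\mu'}\,\|\varphi_l\|_{\mathcal{C}^{2,\alpha}_\mu(\mathcal{T}_\rho)}\le C\,\rho^{\mu-\mu'},
\]
which tends to $0$ as $\rho\to0$, uniformly in $l$. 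Splitting $\Omega\setminus\Lambda$ into $\mathcal{T}_\rho$ and its complement and letting $\rho\to0$ shows $\varphi_l\to\varphi_\infty$ in $\mathcal{C}^{2,\alpha'}_{\mu'}(\Omega\setminus\Lambda)$, and passing the uniform bound to the limit on each dyadic shell inside $\mathcal{T}_\rho$ gives $\varphi_\infty\in\mathcal{C}^{2,\alpha}_\mu$ as well.

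For the second statement, the plan is to promote the $\mathcal{C}^\alpha_\mu$ bound to a $\mathcal{C}^{2,\alpha}_\mu$ bound using elliptic regularity for $\mathbb{L}_l$, and then quote the first case. Away from $\Lambda$, the operator $\mathbb{L}_l$ is uniformly elliptic with coefficients uniformly bounded in $\mathcal{C}^{0,\alpha}$ --- indeed, by Remark \ref{remark:L0}, as $l\to\infty$ it converges to $\mathbb{L}_{g_\dag}$ --- so interior Schauder estimates give $\|\varphi_l\|_{\mathcal{C}^{2,\alpha}(\Omega\setminus\mathcal{T}_\rho)}\le C_\rho$. Near $\Lambda$, one applies ordinary interior Schauder estimates on each dyadic shell $\{2^{-j-1}\rho\le r\le 2^{-j}\rho\}$ after rescaling it to unit size, and then sums the resulting inequalities against the weight $r^{-\mu}$; writing $\mathbb{L}_l=\mathcal{L}_{\epsilon_l}+\mathcal{D}$ with $\mathcal{L}_{\epsilon_l}$ given by \eqref{mathcalL} and $\mathcal{D}$ the lower-order perturbation of Proposition \ref{prop:compare-operators}, this yields the scaled Schauder estimate
\[
\|\varphi_l\|_{\mathcal{C}^{2,\alpha}_\mu(\mathcal{T}_{\rho/2})}\le C\big(\|\mathbb{L}_l\varphi_l\|_{\mathcal{C}^{0,\alpha}_{\mu-2}(\mathcal{T}_\rho)}+\|\varphi_l\|_{\mathcal{C}^0_\mu(\mathcal{T}_\rho)}\big)=C\,\|\varphi_l\|_{\mathcal{C}^0_\mu(\mathcal{T}_\rho)}\le C .
\]
Hence $\{\varphi_l\}$ is bounded in $\mathcal{C}^{2,\alpha}_\mu$ and the first case applies.

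The step I expect to be the main obstacle is the uniformity in $l$ of the scaled Schauder constant as $\epsilon_l\to0$. On the shells with $r\sim\epsilon_l$ the model operator $\mathcal{L}_{\epsilon_l}$ passes between its two limiting regimes (near $r=0$ and near $r=\infty$), so one must verify that, after rescaling each shell to unit size, the resulting operators have coefficients bounded in $\mathcal{C}^{0,\alpha}$ and ellipticity bounded below \emph{uniformly in $l$ and in the shell}. This is precisely what Proposition \ref{prop:expression} provides: the coefficients $\mathcal{A}^\epsilon_\ell$ are smooth and bounded, with $\mathcal{A}^\epsilon_2,\mathcal{A}^\epsilon_3,\mathcal{A}^\epsilon_4$ bounded below away from $0$, all independently of $\epsilon$, while the correction $\mathcal{D}$ is genuinely lower order in the weighted scale. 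Granting this uniformity, the summation over dyadic shells converges with a constant depending only on $\mu$ and on $\dist(\mu,\{\delta_j^\pm\})$, which completes the argument.
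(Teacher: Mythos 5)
Your proof is correct and it is also, in substance, a more concrete realization of what the paper only sketches. The paper's one-paragraph proof goes: local elliptic estimates plus Arzel\`a--Ascoli on compact exhaustions, then the weighted $L^2$ a priori estimate \eqref{not-so-easy} to upgrade local $L^2$ convergence to global $L^2_\delta$ convergence, and elliptic estimates once more to return to H\"older spaces. You instead stay entirely inside the weighted H\"older scale: in the first case the tail estimate $\|\varphi_l\|_{\mathcal C^{2,\alpha'}_{\mu'}(\mathcal T_\rho)}\lesssim \rho^{\mu-\mu'}$ (for $\mu'<\mu$, $\alpha'<\alpha$) forces uniform smallness near $\Lambda$, and in the second case a scaled interior Schauder estimate on dyadic shells upgrades the $\mathcal C^\alpha_\mu$ bound to a $\mathcal C^{2,\alpha}_\mu$ bound so the first case applies. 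This is genuinely more elementary than the route through \eqref{not-so-easy}, and it is also more transparent: the paper's sketch, read literally, requires the sequence to solve an equation in order to invoke \eqref{not-so-easy}, which is not part of the hypothesis of the first assertion; your argument makes the first assertion work with no equation at all, as it should.

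One small correction of emphasis: the constant in your scaled Schauder estimate
\[
\|\varphi_l\|_{\mathcal C^{2,\alpha}_\mu(\mathcal T_{\rho/2})}\le C\bigl(\|\mathbb L_l\varphi_l\|_{\mathcal C^{0,\alpha}_{\mu-2}(\mathcal T_\rho)}+\|\varphi_l\|_{\mathcal C^0_\mu(\mathcal T_\rho)}\bigr)
\]
does \emph{not} depend on $\dist(\mu,\{\delta_j^\pm\})$, since you retain the lower-order term $\|\varphi_l\|_{\mathcal C^0_\mu}$ on the right. The non-indicial-root and $\mu>\frac p2-\frac{n-4}{4}$ hypotheses are only needed if one tries to drop that term globally, as in Proposition~\ref{prop-not-so-easy}; for the present compactness Lemma your argument shows that, in fact, neither hypothesis is essential. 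The genuinely nontrivial uniformity you correctly isolate is that the coefficients $\mathcal A^\epsilon_\ell$ are translates of $\mathcal A^1_\ell$ in the variable $t=-\log r$ (because $v_\epsilon(t)=v_1(t+\log\epsilon)$), so after rescaling each shell to unit size the coefficient bounds and ellipticity constant are uniform in both $l$ and the shell index, which is exactly what Proposition~\ref{prop:expression} provides.
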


\begin{proof}
We use elliptic estimates and Ascoli's theorem in compact sets. Note that, if we have convergence of a subsequence in a compact set, the estimate \eqref{not-so-easy} will imply convergence in weighted $L^2$-spaces defined on the full domain $M\setminus\Lambda$. Weighted elliptic estimates again will yield the desired conclusion.
\end{proof}

There are several possibilities according to the position of $y_0$:\\

\textbf{Case 1:} we first assume that $y_0\not\in \Lambda$. 
For $\ep_l$ small enough, the operator $\mathbb L'_l$ converges $\mathbb L'$ where  $\mathbb L'=\mathbb L(1,g_M)$, the background operator on $M$, or, if $y_0$ belongs to the transition region from Proposition \ref{prop-gluing-Lambda}, then $\mathbb L'=\mathbb L(u_*,g_M)$. In any case, both are non-degenerate.

By Lemma \ref{lemma:convergence}, the sequence $\{\varphi_l\}$  converges (up to passing to a subsequence) to some $\varphi_0$ in $M\setminus\Lambda$ satisfying
\begin{equation}\label{equation60}\mathbb L' \varphi_0=0\quad\text{in }M\setminus \Lambda.\end{equation}
Then, since $\Lambda$ has Hausdorff dimension $p<n-2$ and $u=O(d_\Lambda^\mu)$,  we can apply classical removability of singularity results for quasilinear equations (see, for instance, Chapter 3.1.3 in \cite{Maly-Ziemer}, or the original \cite{Serrin}) to conclude that $\varphi_0$ can be extended to a weak solution of $\mathbb L' \varphi_0=0$ in the whole $M$. Then we must have $\varphi_0\equiv 0$ due to  non-degeneracy. This yields a  contradiction with \eqref{sup-achieved}.\\

\textbf{Case 2:} Assume now that $y_0\in\Lambda$. Let $r_l=\dist(x_l,\Lambda)$ to be the radial Fermi coordinate, and rescale
$$ \hat\varphi_l (y)=r_l^{-\mu}\varphi_l(y_l+r_l y).$$
Then Lemma \ref{lemma:convergence} implies that, up to a subsequence, $\hat \varphi_l$
converges to $\hat\varphi_0\in\mC^\alpha_{\mu,0}(\mbR^n\back\mbR^p)$, a solution of $L'\hat \varphi_0=0$
in $\mbR^n\back\mbR^p$ for some linear operator $L'$. In particular, from \eqref{sup-achieved} we know that 
\be\label{sup-achieved1}1\geq \hat\varphi_0(y_0)\geq\tfrac{1}{2}.\ee

There are several possibilities for $L'$ according to the behavior of $\ep_l/r_l$, since the approximate solution $\bar u_\ep$ has a different behavior in each regime from Corollary \ref{cor-model-solution}:

First, if $\ep_l/r_l\to +\infty$, then we are in the situation \eqref{behavior-zero}, and
$L'$ reduces to be the operator $\mathcal L^{(0)}$ from \eqref{L0} in $\mathbb R^n\setminus \mathbb R^p$.
Because of Proposition \ref{model-injective}, $\hat\varphi_0$ must be identically zero. Contradiction again with \eqref{sup-achieved1}.

Second, assume  $\ep_l/r_l\to C\neq 0$. Without loss of generality, we can take $C=1$, otherwise rescale. Then, taking into account the scaling \eqref{rescale}, $L'$ coincides with the operator $\mathcal L'_1$ in expression \eqref{operator:L1}. We can use Proposition \ref{L1-injective} to reach a contradiction as above.

Finally, if $\ep_l/r_l\to 0$,  the operator $L'$ is simply $\mathcal L^{(\infty)}$ from \eqref{Linfty} defined in $\mathbb R^n\setminus\mathbb R^p$, and we argue as in the previous cases, using Proposition \ref{model-injective-infty} to conclude.

     This completes the proof of Proposition \ref{prop:uniform-injectivity}.\\
\qed

For injectivity in Lebesgue spaces, first set
 \begin{equation}\label{definition-delta}
 \delta=\mu-\tfrac{p}{2}+\tfrac{n-4}{4}.
\end{equation}
The shift $-\tfrac{p}{2}+\tfrac{n-4}{4}$ allows to pass from the original operator $\mathbb L'_\ep$, where we have injectivity in H\"older spaces, to the conjugated $\widetilde{\mathbb L}'_\ep$, where Hilbert space analysis is  more clear.

\begin{cor}\label{cor-injectivity}
For every $\ep\in(0,\ep_0)$, $(\widetilde{\mathbb L}'_\ep)^*$ is
injective in $L^2_{\delta}(M\setminus\Lambda)$, and thus, $\widetilde{\mathbb L}'_\ep$ is surjective in $L^2_{-\delta}(M\setminus\Lambda)$.
\end{cor}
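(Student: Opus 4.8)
The plan is to deduce injectivity of $(\widetilde{\mathbb L}_\ep)^*$ on $L^2_\delta$ directly from the injectivity of $\mathbb L_\ep$ on the weighted H\"older space $\mathcal C^{2,\alpha}_\mu$ established in Proposition \ref{prop:uniform-injectivity}, via Remark \ref{injectivity-adjoint} and the identification of weights from \eqref{definition-delta}. First I would recall the change of unknown \eqref{change:phiw}, namely $\varphi = d^{-\frac{n-4}{4}}w = d^{-\frac{n-4}{4}+\frac p2}\widetilde w$, which converts the action of $\mathbb L_\ep$ on H\"older spaces into the action of the conjugate operator $\widetilde{\mathbb L}_\ep$ on the weighted $L^2_\delta$ spaces of \eqref{norm-L2delta}; under this correspondence the H\"older weight $\mu$ matches the Lebesgue weight $\delta=\mu-\frac p2+\frac{n-4}{4}$, which is exactly \eqref{definition-delta}. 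Since $\mu$ satisfies \eqref{choose-mu}, it is not an indicial root and $\delta>0$, so Proposition \ref{prop-not-so-easy} applies.

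Next I would argue as follows. Let $\widetilde w\in L^2_\delta(\Omega\setminus\Lambda)$ satisfy $(\widetilde{\mathbb L}_\ep)^*\widetilde w=0$. By the duality relation \eqref{relation-adjoint}, $(\widetilde{\mathbb L}_\ep)^*$ has the same edge structure and the same (negated) set of indicial roots as $\widetilde{\mathbb L}_\ep$, so by Remark \ref{injectivity-adjoint} the entire a-priori estimate and elliptic-regularity machinery of Section \ref{section:Fredholm} applies verbatim to $(\widetilde{\mathbb L}_\ep)^*$. The a-priori estimate \eqref{not-so-easy} (in the version for the adjoint) combined with elliptic estimates bootstraps $\widetilde w$ into the H\"older space: writing $\varphi$ for the function corresponding to $\widetilde w$ under \eqref{change:phiw}, one gets $\varphi\in\mathcal C^{2,\alpha}_{\mu'}$, where $\mu'$ is the H\"older weight dual to $\delta$, i.e. $\mu'=-\delta+\frac p2-\frac{n-4}{4}=-\mu+p-\frac{n-4}{2}$; equivalently, one works with the unweighted statement and checks $\varphi$ decays at the indicial rate $\chi^{(0)}_{0,+}$ or better. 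Here I would invoke Remark \ref{injectivity-adjoint} to run the contradiction argument of Proposition \ref{prop:uniform-injectivity} line by line with $\mathbb L_\ep$ replaced by $(\widetilde{\mathbb L}_\ep)^*$ and $\mathcal L_1$ replaced by its adjoint $\mathcal L_1^*$ (and likewise the normal operators $\mathcal L^{(0)},\mathcal L^{(\infty)}$ by their adjoints), using the $r\to 0$ and $r\to\infty$ analyses of Propositions \ref{model-injective}, \ref{model-injective-infty} and \ref{L1-injective}, whose Bessel-function arguments are insensitive to passing to the adjoint since they only use the (symmetric-in-sign) indicial roots. This forces $\widetilde w\equiv 0$, giving injectivity of $(\widetilde{\mathbb L}_\ep)^*$ on $L^2_\delta$.

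Finally, surjectivity of $\widetilde{\mathbb L}_\ep$ on $L^2_{-\delta}(\Omega\setminus\Lambda)$ follows immediately: by Proposition \ref{prop-not-so-easy} the operator $\widetilde{\mathbb L}_\ep$ is semi-Fredholm on $L^2_{-\delta}$ (note $-\delta$ would be negative, so one really applies the semi-Fredholm statement to $(\widetilde{\mathbb L}_\ep)^*$ on $L^2_\delta$ with $\delta>0$ and then dualizes), hence by \eqref{ker} one has $\rango(\widetilde{\mathbb L}_\ep)=\ker((\widetilde{\mathbb L}_\ep)^*)^\perp$; since the kernel of the adjoint is trivial, the range is all of $L^2_{-\delta}$. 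The main obstacle I anticipate is not any single estimate but the bookkeeping of which weighted space the adjoint lives in and checking that the contradiction/blow-up argument of Proposition \ref{prop:uniform-injectivity} transfers to $(\widetilde{\mathbb L}_\ep)^*$ without the adjoint picking up a different limiting model operator near $\Lambda$; this is exactly the content guaranteed by the conjugation identity \eqref{relation-adjoint} together with Remark \ref{injectivity-adjoint}, and the rescaling trichotomy ($\ep_l/r_l\to\infty,\,C\neq 0,\,0$) is identical because the approximate solution $\bar u_\ep$ and hence the coefficients of $\widetilde{\mathbb L}_\ep$ are unchanged by taking adjoints.
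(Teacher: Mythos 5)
Your overall architecture matches the paper's proof: (i) invoke Remark \ref{injectivity-adjoint} to get injectivity of $(\widetilde{\mathbb L}_\ep)^*$ in a weighted H\"older space, (ii) use elliptic (edge) regularity to upgrade an $L^2_\delta$ kernel element to that H\"older space, and (iii) deduce surjectivity from \eqref{ker} once the adjoint is injective. You also correctly flag that the a-priori estimate of Proposition \ref{prop-not-so-easy} is stated for a \emph{positive} weight, so the closed-range information on $L^2_{-\delta}$ really comes from applying it to $(\widetilde{\mathbb L}_\ep)^*$ on $L^2_\delta$ and dualizing; the paper is terse here, so this is a useful clarification.

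However, there is a sign error in the key weight identification. If $\widetilde w\in L^2_\delta$ solves $(\widetilde{\mathbb L}_\ep)^*\widetilde w=0$, the H\"older weight for the corresponding $\varphi=d^{-\frac{n-4}{4}+\frac p2}\widetilde w$ is $\mu=\delta+\tfrac p2-\tfrac{n-4}{4}$, i.e.\ precisely \eqref{definition-delta}, not $\mu'=-\delta+\tfrac p2-\tfrac{n-4}{4}$. Your $\mu'$ is the weight associated with $L^2_{-\delta}$, not $L^2_\delta$, and since $\delta>0$ it satisfies $\mu'<\tfrac p2-\tfrac{n-4}{4}\le\re(\chi^{(0)}_{0,+})$, so it falls \emph{outside} the range \eqref{choose-mu} where Proposition \ref{prop:uniform-injectivity}/Remark \ref{injectivity-adjoint} give injectivity. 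As a result, the contradiction argument you want to quote would not apply as written. Your parenthetical ``equivalently, one works with the unweighted statement and checks $\varphi$ decays at the indicial rate $\chi^{(0)}_{0,+}$ or better'' has the correct intuition, but it is inconsistent with your formula for $\mu'$; once you replace $\mu'$ by $\mu$ everything lines up: $\mu$ satisfies \eqref{choose-mu}, so the H\"older injectivity applies and forces $\widetilde w\equiv 0$. Aside from this weight bookkeeping, the plan is sound and coincides with the paper's.
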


\begin{proof}
We use first Proposition \ref{prop:uniform-injectivity} to obtain injectivity in H\"older spaces.
Then, classical regularity estimates allow to pass from Lebesgue spaces to H\"older spaces. This implies, in particular, that injectivity holds in weighted $L^2_\delta$ spaces if we choose  a parameter as in  \eqref{definition-delta}.

For the second assertion, simply recall the relation \eqref{ker}.
\end{proof}





Finally, we will prove an auxiliary result that will be needed in the next section:

\begin{lemma}\label{adjoint-bound}
Assume $\mu$ satisfies \eqref{choose-mu}. There exists $\epsilon_0>0$ and $C>0$ such that, for every $\epsilon\in(0,\ep_0)$, if $\psi\in\mathcal C^{2,\alpha}_{\mu}(M\setminus\Lambda)$ is a solution to
\begin{equation*}
(\mathbb L'_\ep)^* \psi= h,
\end{equation*}
for $h\in\mathcal C^{0,\alpha}_{\mu-2}(M\setminus\Lambda)$, then
\begin{equation}
\|\psi\|_{\mathcal C^{2,\alpha}_\mu}\leq C\|h\|_{\mathcal C^{0,\alpha}_{\mu-2}}.
\end{equation}
\end{lemma}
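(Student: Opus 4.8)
The plan is to argue by contradiction along a sequence $\epsilon_l \to 0$, exactly mirroring the structure of the proof of Proposition \ref{prop:uniform-injectivity}, but now keeping the inhomogeneous term. First I would suppose the estimate fails: there exist $\epsilon_l \to 0$, functions $\psi_l \in \mathcal C^{2,\alpha}_\mu$ and $h_l \in \mathcal C^{0,\alpha}_{\mu-2}$ with $(\mathbb L_{\epsilon_l})^* \psi_l = h_l$, normalized so that $\|\psi_l\|_{\mathcal C^{2,\alpha}_\mu} = 1$ while $\|h_l\|_{\mathcal C^{0,\alpha}_{\mu-2}} \to 0$. By the normalization there is a point $y_l$ where $d_l^{-\mu}|\psi_l|(y_l) > \tfrac12$ (up to adjusting the power so that the supremum in the $\mathcal C^0_\mu$ part is comparable to $1$; if instead the supremum is attained by the Hölder seminorm piece, standard interpolation reduces to this case). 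Since $\Omega$ is compact, pass to a subsequence with $y_l \to y_0$.

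Next I would run the same three-case analysis according to the position of $y_0$. If $y_0 \notin \Lambda$, then for $l$ large $(\mathbb L_{\epsilon_l})^*$ coincides with the fixed operator $(\mathbb L_{g_\dag})^*$ near $y_0$ (using Remark \ref{remark:L0} and relation \eqref{relation-adjoint}); by Lemma \ref{lemma:convergence} applied to the adjoint — which is legitimate since the adjoint has the same edge structure, cf. Remark \ref{injectivity-adjoint} — the sequence $\psi_l$ converges to $\psi_0$ solving $(\mathbb L_{g_\dag})^*\psi_0 = 0$, and removability of singularities plus the non-degeneracy of $g_\dag$ force $\psi_0 \equiv 0$, contradicting $d_l^{-\mu}|\psi_l|(y_l) > \tfrac12$. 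If $y_0 \in \Lambda$, rescale $\hat\psi_l(y) = \epsilon_l^{-\mu}\psi_l(y_l + \epsilon_l y)$ and note the rescaled right-hand side still tends to zero in $\mathcal C^{0,\alpha}_{\mu-2}$; according to whether $\epsilon_l/r_l \to \infty$, $\to C \neq 0$, or $\to 0$ the limit operator is $(\mathcal L^{(0)})^*$, $(\mathcal L_1)^*$, or $(\mathcal L^{(\infty)})^*$ on $\mathbb R^n\setminus\mathbb R^p$. In each case injectivity of the adjoint (the versions of Propositions \ref{model-injective}, \ref{L1-injective}, \ref{model-injective-infty} for the adjoint, available by Remark \ref{injectivity-adjoint}) gives $\hat\psi_0 \equiv 0$, again contradicting the lower bound at the rescaled point. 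Hence the contradiction hypothesis is untenable and the estimate holds for some $\epsilon_0 > 0$ and $C > 0$.

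The main obstacle I anticipate is the same one that is implicit throughout this section: making sure the compactness input (Lemma \ref{lemma:convergence}) applies to $(\mathbb L_\epsilon)^*$ with a \emph{uniform in $\epsilon$} constant, and that the a priori estimate \eqref{not-so-easy} — which was proved for $\widetilde{\mathbb L}_\epsilon$ — transfers to the adjoint. This is exactly what Remark \ref{injectivity-adjoint} and the duality discussion around \eqref{relation-adjoint} are designed to provide: $(\widetilde{\mathbb L}_\epsilon)^* = r^{-2\delta}\widetilde{\mathbb L}_\epsilon r^{2\delta}$ is again a second-order elliptic edge operator with the same indicial structure (the indicial roots are merely reflected about $\delta=0$), so Proposition \ref{prop-not-so-easy} and Lemma \ref{lemma:convergence} apply verbatim with $\delta$ replaced by $-\delta$ and the constants independent of $\epsilon$. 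One also needs $\mu$ (equivalently $\delta$) to avoid the adjoint's indicial roots, which is guaranteed by \eqref{choose-mu} since that interval was chosen to lie strictly between consecutive indicial roots. With these observations in place the contradiction argument closes just as in Proposition \ref{prop:uniform-injectivity}, and the only genuinely new bookkeeping is carrying the vanishing of $\|h_l\|_{\mathcal C^{0,\alpha}_{\mu-2}}$ through the rescaling in Case 2, which is routine.
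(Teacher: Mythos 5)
Your proposal is correct and follows exactly the route the paper intends: the paper's proof of Lemma \ref{adjoint-bound} simply states that it is a contradiction argument ``very similar to the proof of Proposition \ref{prop:uniform-injectivity}'' (citing Remark \ref{injectivity-adjoint}) and omits the details, which you have filled in faithfully, including carrying the vanishing inhomogeneous term $h_l$ through the rescaling and invoking the adjoint versions of the model injectivity results. The only point you rightly flag as needing care --- that the a priori estimate and compactness lemma transfer to $(\widetilde{\mathbb L}_\epsilon)^*$ via \eqref{relation-adjoint} with constants uniform in $\epsilon$ --- is precisely the content the paper delegates to Remark \ref{injectivity-adjoint}.
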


\begin{proof}
This a contradiction argument very similar to the proof of Proposition \ref{prop:uniform-injectivity} and thus, we omit it.
\end{proof}

\section{Uniform surjectivity of $\mathbb L_\epsilon$}\label{section:uniform-surjectivity}

We have just shown that for each fixed $\ep$, the operator
$$\widetilde{\mathbb L}'_\ep:L^2_{-\delta}(M\setminus\Lambda)\to L^2_{-\delta}(M\setminus\Lambda)$$
 is surjective.
Now we would like to construct a right inverse for $\mathbb L'_\epsilon$.
For that we set
$$\widetilde{\mathds{ L}}'_\epsilon:=\widetilde{\mathbb L}'_\epsilon \circ d_\Lambda^{2\delta} \circ (\widetilde{\mathbb L}'_\epsilon)^* :
L^2_{-\delta}(M\setminus\Lambda)\to L^2_{-\delta}(M\setminus\Lambda).$$
This $\widetilde{\mathds{ L}}'_\epsilon$ is an isomorphism and thus, it has a
bounded two-sided inverse
$$\widetilde{\mathds{G}}'_\epsilon:L^2_{-\delta}(M\setminus\Lambda)\to L^2_{-\delta}(M\setminus\Lambda).$$
In particular, $\widetilde{\mathds{L}}'_\epsilon \circ \widetilde{\mathds{G}}'_\epsilon =I$, which means that
\begin{equation}\label{duality2}\widetilde{\mathbb{G}}'_\epsilon:=d_\Lambda^{-2\delta}\,(\widetilde{\mathbb L}'_\epsilon)^*\, d_\Lambda^{2\delta}\,\widetilde{\mathds{G}}'_\epsilon:
L^2_{-\delta}(M\setminus\Lambda)\to L^2_{-\delta}(M\setminus\Lambda)
\end{equation} is a bounded right inverse for $\widetilde{\mathbb L}_\epsilon$ that maps into the
range of $d_\Lambda^{-2\delta}(\widetilde{\mathbb L}'_\epsilon)^*$.\\

In particular, an analogous result is true for $\mathbb G'_\ep$ with the usual shift of indexes, analogous to \eqref{definition-delta}. Now we choose
\begin{equation}\label{choose-nu}\nu<\tfrac{p}{2}-\tfrac{n-4}{4}\quad\text{slightly larger than } \nu':=-\delta+\tfrac{p}{2}-\tfrac{n-4}{4}\end{equation}
and
  restrict this inverse to  the smaller set $\mathcal C^{0,\alpha}_{\nu- 2}(M\setminus\Lambda)$.
Then
\begin{equation*}
\mathbb G'_\ep:\mathcal C^{0,\alpha}_{\nu- 2}(M\setminus\Lambda)\to L^2_{\nu'}(M\setminus\Lambda).
\end{equation*}
Let us improve the regularity of this inverse:

\begin{pro}\label{prop:improvement}
If $\varphi\in L^2_{\nu'}(M\setminus\Lambda)$ is a solution of $\mathbb L'_\epsilon \varphi=h$ for $h\in \mathcal C^{0,\alpha}_{\nu- 2}(M\setminus\Lambda)$, then we have that $\varphi\in \mathcal C^{2,\alpha}_{\nu}(M\setminus\Lambda)$ for
$\nu$ close enough to $\nu'$, $\nu'<\nu$.
\end{pro}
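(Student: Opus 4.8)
The plan is to prove this by a standard weighted elliptic bootstrap, splitting $\Omega\setminus\Lambda$ into the region far from $\Lambda$ and a tubular neighbourhood of $\Lambda$, exactly in the spirit of \cite[Section 3]{Mazzeo-Pacard}. On $\Omega\setminus\mathcal T_{\rho/2}(\Lambda)$ the operator $\mathbb L_\epsilon$ is uniformly elliptic with smooth coefficients (for $\epsilon$ small it is essentially $\mathbb L_{g_\dag}$, see Remark \ref{remark:L0}) and the weight function $d$ is bounded above and below; hence interior and boundary Schauder estimates applied to $\mathbb L_\epsilon\varphi=h$ immediately upgrade $\varphi\in L^2$ with $h\in\mathcal C^{0,\alpha}$ to $\varphi\in\mathcal C^{2,\alpha}(\Omega\setminus\mathcal T_{\rho/2})$, the weight playing no role. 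So all the work is concentrated near $\Lambda$.

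Near the singular set I would first reduce to the model operator: by Remark \ref{remark:dependence-z} one localizes near a fixed point of $\Lambda$ and works in normal coordinates on $\Lambda$, and by Proposition \ref{prop:compare-operators} the operator $\mathbb L_\epsilon$ there agrees with the model operator $\mathcal L_\epsilon$ on $\mathbb R^n\setminus\mathbb R^p$ up to a remainder $\mathcal D$ which is lower order and carries an extra factor $r^\sigma$, $\sigma>0$. By Proposition \ref{prop:expression} (and its conjugated form \eqref{L-tilde-epsilon}) the principal part of $\mathcal L_\epsilon$ is uniformly elliptic and all its coefficients are bounded with limits as $r\to 0$ that do not depend on $\epsilon$. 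Then I pass to the variable $t=-\log r$ and the conjugated unknown $\widetilde w$ of \eqref{change:phiw}, so that the hypothesis $\varphi\in L^2_{\nu'}$ becomes $\int \widetilde w^{\,2} e^{-2\delta t}\,dt\,d\theta\,dz<\infty$ near $\Lambda$, while the desired conclusion $\varphi\in\mathcal C^{2,\alpha}_\nu$ corresponds to a pointwise (with derivatives) bound $\widetilde w=O(e^{-\sigma t})$ with $\sigma+\delta=\nu-\nu'>0$.

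Now I decompose the $t$–half–line into unit intervals $I_j=[j,j+1]$. On each $I_j$ the conjugated operator $\widetilde{\mathbb L}_\epsilon$ is, up to remainders that are lower order and exponentially small in $j$, a fixed uniformly elliptic operator with bounded coefficients, so interior Schauder gives, uniformly in $j$ and $\epsilon$,
\[
\|\widetilde w\|_{\mathcal C^{2,\alpha}(I_j)}\le C\big(\|\widetilde h\|_{\mathcal C^{0,\alpha}(I_j')}+\|\widetilde w\|_{L^2(I_j')}\big),
\]
with $I_j'$ a slightly enlarged interval. The first term on the right is $O(e^{-\sigma j})$ precisely because $h\in\mathcal C^{0,\alpha}_{\nu-2}$, which matches the target rate; the second term is handled by first invoking the a priori estimate of Proposition \ref{prop-not-so-easy} together with the injectivity of Corollary \ref{cor-injectivity} to conclude that the $L^2_{\nu'}$ solution is \emph{the} solution and therefore already lies in the better weighted $L^2$ space, down to any weight strictly between $\nu'$ and the next indicial root. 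Since $\nu$ is chosen close enough to $\nu'$ that the open interval $(\nu',\nu)$ contains no indicial root — this is exactly the meaning of ``$\nu$ close to $\nu'$'' — summing the shell estimates and taking suprema over $j$ yields $\widetilde w=O(e^{-\sigma t})$ with the corresponding derivative bounds, i.e.\ $\varphi\in\mathcal C^{2,\alpha}_\nu(\Omega\setminus\Lambda)$, together with $\|\varphi\|_{\mathcal C^{2,\alpha}_\nu}\le C\big(\|h\|_{\mathcal C^{0,\alpha}_{\nu-2}}+\|\varphi\|_{L^2_{\nu'}}\big)$.

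The main obstacle, and the only place where care is genuinely needed, is the bookkeeping of the several weight conventions ($L^2_{\pm\delta}$ versus the Hölder weights $\nu,\nu-2,\nu'$) through the rescaling, and the verification that the gap $\nu-\nu'>0$ is exactly what gets consumed in converting the integrated $L^2$ decay into a pointwise Schauder bound — equivalently, that no indicial root sits in $(\nu',\nu)$, so the solution cannot ``hang up'' at an intermediate decay rate. Everything else — uniform ellipticity of the rescaled operators, $\epsilon$–uniformity of the Schauder constants, and control of the Fermi–coordinate error terms — is already supplied by Propositions \ref{prop:expression}, \ref{prop:compare-operators} and \ref{prop-not-so-easy}.
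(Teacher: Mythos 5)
Your overall scaffolding (splitting into a far region and a tubular neighbourhood, conjugating to the $t$-variable, and running rescaled Schauder estimates on unit shells) is fine, and you correctly identify the heart of the matter as an improvement of weight governed by indicial roots. The problem is in the step where you bound the $L^2$ term on each shell, namely the claim that ``invoking the a priori estimate of Proposition \ref{prop-not-so-easy} together with the injectivity of Corollary \ref{cor-injectivity}'' shows the $L^2_{\nu'}$ solution ``already lies in the better weighted $L^2$ space.'' This step does not go through as written, for three distinct reasons. First, Proposition \ref{prop-not-so-easy} is an a priori estimate in $L^2_\delta$ with $\delta>0$, and it presupposes that the function already lies in that space; in the conjugated picture the hypothesis $\varphi\in L^2_{\nu'}$ corresponds to $\widetilde w\in L^2_{-\delta}$ with $-\delta<0$, so the proposition simply does not apply. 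Second, Corollary \ref{cor-injectivity} gives injectivity of the adjoint $(\widetilde{\mathbb L}_\epsilon)^*$ on $L^2_\delta$, equivalently surjectivity of $\widetilde{\mathbb L}_\epsilon$ on $L^2_{-\delta}$; it does \emph{not} give injectivity (uniqueness) of $\widetilde{\mathbb L}_\epsilon$ on $L^2_{-\delta}$, and for an edge operator on the low-weight side of the critical line uniqueness may genuinely fail. Third, even if uniqueness held, the constructed right inverse $\mathbb G_\epsilon$ is only known to land in $L^2_{\nu'}$; asserting that this particular solution ``already lies in the better weighted $L^2$ space'' is exactly the statement you are trying to prove, so the argument is circular.

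The paper's proof is much shorter and uses a different ingredient. Rescaled Schauder estimates give $\varphi\in\mathcal C^{2,\alpha}_{\nu'}$ directly (this part you have), and then the upgrade from weight $\nu'$ to $\nu$ is read off from the boundary regularity theory for elliptic edge operators, specifically \cite[Theorem 7.14]{Mazzeo:edge}: a solution with controlled growth near the edge admits a partial expansion whose leading exponents are indicial roots, so its decay rate can only change when the weight crosses an indicial root, and $(\nu',\nu)$ is chosen to contain none. That structural fact (a partial polyhomogeneous expansion of $\varphi$ at $\Lambda$) is exactly what your shell-by-shell argument would need to seed the $L^2$ decay; the Section \ref{section:Fredholm} a priori estimates cannot substitute for it, since they live on the opposite side of the critical weight.
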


\begin{proof}
Rescaled Schauder estimates immediately imply that the solution $\varphi\in \mathcal C^{2,\alpha}_{\nu'}(M\setminus\Lambda)$. However, the main statement in this Proposition is an improvement of weight from $\nu'$ to $\nu$. This fact follows from the work of Mazzeo \cite[Theorem 7.14]{Mazzeo:edge}, where they show that a change in the asymptotics of $\varphi$ is created only by the crossing of an indicial root, and this cannot happen if $\nu$ and $\nu'$ are close enough.
\end{proof}

In summary, there exists a (bounded) right inverse for
 $$\mathbb L'_\ep:\mathcal C^{2,\alpha}_{\nu}(M\setminus\Lambda)\to\mathcal C^{0,\alpha}_{\nu- 2}(M\setminus\Lambda),$$
 given  by
 $$\mathbb G'_\ep:\mathcal C^{0,\alpha}_{\nu- 2}(M\setminus\Lambda)\to \mathcal C^{2,\alpha}_{\nu}(M\setminus\Lambda).$$
 The main result in  this section is the following:

\begin{pro}\label{prop:unif-surjectivity}
There exists $\ep_0>0$ such that, for all $\ep\in(0,\ep_0)$, the norm of $\mathbb G'_\ep$ does not depend on $\ep$.
\end{pro}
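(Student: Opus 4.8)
The plan is to run a blow-up/contradiction argument of exactly the same shape as the proof of Proposition~\ref{prop:uniform-injectivity}. The one genuinely new ingredient is that the right inverse $\mathbb G_\ep$ in \eqref{duality2} was built so as to take values in the range of $d^{-2\delta}(\mathbb L_\ep)^*$; this structural constraint is what compensates for the fact that the model operators $\mathcal L^{(0)},\mathcal L_1,\mathcal L^{(\infty)}$ are \emph{not} injective in the weight $\nu$, which by \eqref{choose-nu} and Figure~\ref{figure4} sits to the left of the central indicial root $\tfrac p2-\tfrac{n-4}{4}$. We aim to prove a bound $\|\mathbb G_\ep\|\le C$ with $C$ independent of $\ep$, which is the content of the statement.

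Suppose this fails. Then there are sequences $\ep_l\to 0$, $h_l\in\mathcal C^{0,\alpha}_{\nu-2}(\Omega\setminus\Lambda)$ with $\|h_l\|_{\mathcal C^{0,\alpha}_{\nu-2}}\to 0$, and $\varphi_l:=\mathbb G_{\ep_l}h_l$ with $\|\varphi_l\|_{\mathcal C^{2,\alpha}_\nu}=1$. By \eqref{duality2} we may write $\varphi_l=d^{-2\delta}(\mathbb L_{\ep_l})^*\psi_l$, so that $(\mathbb L_{\ep_l})^*\psi_l=d^{2\delta}\varphi_l$. Since $d^{2\delta}\varphi_l$ has weight $\nu+2\delta=\mu$ by \eqref{definition-delta}, and since $(\mathbb L_{\ep_l})^*$ is injective (Remark~\ref{injectivity-adjoint}), Lemma~\ref{adjoint-bound} identifies $\psi_l$ as the unique solution and gives $\|\psi_l\|_{\mathcal C^{2,\alpha}_\mu}\le C\|d^{2\delta}\varphi_l\|_{\mathcal C^{0,\alpha}_{\mu-2}}\le C\|\varphi_l\|_{\mathcal C^{2,\alpha}_\nu}=C$, uniformly in $l$. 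Pick $y_l$ with $d_l^{-\nu}|\varphi_l(y_l)|>\tfrac12$, $d_l=d(y_l)$, and extract $y_l\to y_0\in\Omega$. Following verbatim the case distinction in the proof of Proposition~\ref{prop:uniform-injectivity} — according to whether $y_0\notin\Lambda$, or $y_0\in\Lambda$ and $\ep_l/\dist(y_l,\Lambda)\to\infty$, $\to C\neq0$, or $\to0$ — and applying the convergence Lemma~\ref{lemma:convergence} to the appropriately rescaled $\varphi_l$ \emph{and} $\psi_l$, we obtain a non-trivial limit $\varphi_0$, a limit $\psi_0$ of weight $\mu$, and a limit operator $L$ with
\begin{equation*}
L\varphi_0=0,\qquad \varphi_0=d^{-2\delta}L^*\psi_0,
\end{equation*}
where $L=\mathbb L_{g_\dag}$ (from Remark~\ref{remark:L0}) when $y_0\notin\Lambda$, and $L\in\{\mathcal L^{(0)},\mathcal L_1,\mathcal L^{(\infty)}\}$ on $\mathbb R^n\setminus\mathbb R^p$ when $y_0\in\Lambda$.

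When $y_0\notin\Lambda$ the weight is irrelevant and one concludes exactly as in Case~1 of Proposition~\ref{prop:uniform-injectivity}: removability of singularities extends $\varphi_0$ across $\Lambda$ and the non-degeneracy of $g_\dag$ forces $\varphi_0\equiv0$, contradicting $|\varphi_0(y_0)|\ge\tfrac12$. When $y_0\in\Lambda$ we use the second identity: $0=L\varphi_0=L\,d^{-2\delta}L^*\psi_0=\widetilde{\mathds L}^{(0)}\psi_0$, where $\widetilde{\mathds L}^{(0)}$ is the limit of the isomorphisms $\widetilde{\mathds L}_\ep=\widetilde{\mathbb L}_\ep\circ d^{2\delta}\circ\widetilde{\mathbb L}_\ep^*$. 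Pairing against $\psi_0$ in the weighted $L^2$ inner product \eqref{space-H} (resp.\ its analogue for $\mathcal L^{(\infty)}$) for which $L$ is self-adjoint up to the conjugation \eqref{relation-adjoint}, and using that $\varphi_0$ has weight $\nu$, $\psi_0$ weight $\mu$, with $\nu+\mu=p-\tfrac{n-4}{2}$ symmetric about the centre, all boundary terms in the integration by parts vanish and we get $\int d^{-2\delta}(L^*\psi_0)^2\,(\text{positive weight})=0$; hence $L^*\psi_0\equiv0$ and $\varphi_0=d^{-2\delta}L^*\psi_0\equiv0$. (Equivalently, $\widetilde{\mathds L}^{(0)}$ is injective on the weight-$\mu$ space because its factor $L^*$ is, by Propositions~\ref{model-injective}, \ref{model-injective-infty}, \ref{L1-injective} combined with \eqref{relation-adjoint} and Remark~\ref{injectivity-adjoint}.) This contradicts $|\varphi_0|\ge\tfrac12$ at the base point and proves the uniform bound.

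The main obstacle I expect is the weight bookkeeping: one must check, in each of the three blow-up regimes, that $\psi_0$ genuinely lands in the weight-$\mu$ space on which the adjoint is injective, and that the boundary contributions in the integration by parts truly vanish — which is precisely the point of choosing $\nu$ and $\mu$ as reflections of one another across $\tfrac p2-\tfrac{n-4}{4}$ and of the self-adjoint structure \eqref{HH}--\eqref{space-H}. A secondary (harmless) nuisance is the borderline case $\re\chi^{(0)}_{0,\pm}=\tfrac p2-\tfrac{n-4}{4}$, where the relevant Bessel index is complex; this is handled just as in the closing remark of Section~\ref{section:Fredholm}.
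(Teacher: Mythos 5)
Your proposal is correct and follows essentially the same route as the paper's own proof: blow-up/contradiction, the same case distinction on the location of $y_0$ and the rate $\ep_l/r_l$, exploiting the structure $\varphi_l = d^{-2\delta}(\mathbb L_{\ep_l})^*\psi_l$ from \eqref{duality2} together with Lemma~\ref{adjoint-bound} to get a bounded $\psi_l$, and then passing to the limit and integrating $LL^*\psi_0=0$ against $\psi_0$ to force $L^*\psi_0\equiv 0$. The only cosmetic differences are your normalization ($\|\varphi_l\|=1$, $\|h_l\|\to 0$, versus the paper's $\|h_l\|=1$, $m_l\to\infty$) and the fact that the paper first Fourier-transforms in $z$ and integrates by parts mode by mode in $r$, whereas you formulate the integration by parts directly in the weighted $L^2$ space of \eqref{space-H}--\eqref{HH}; both are equivalent, and your explicit remark on why the boundary terms vanish (the reflection symmetry of $\nu$ and $\mu$ about $\tfrac p2-\tfrac{n-4}{4}$) is a useful gloss that the paper leaves implicit.
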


\begin{proof}
The proof is very similar to \cite[Theorem 6]{Mazzeo-Pacard} and \cite[Proposition 4.4]{Fakhi}.
As in the proof of Proposition \ref{prop:uniform-injectivity}, we argue by contradiction. Assume that there exist sequences $\{\ep_l\}$,  $\{h_l\}$, $\{\varphi_l\}$ satisfying $\varphi_l=\mathbb G'_{\ep_l} h_l$ and
$$\sup_{M\setminus\Lambda} \{(d_\Lambda)^{-\nu} |h_l|\}=1\quad\text{but}\quad \sup_{M\setminus\Lambda} \{(d_\Lambda)^{-\nu} |\varphi_l|\}=:m_l\to\infty$$
as $\ep_l\to 0$.

We denote $\mathbb L'_l:=\mathbb L'_{\ep_l},\mathbb G'_l:=\mathbb G'_{\ep_l}$ and recall that, by the duality relation \eqref{duality2}, $\varphi_l= (d_\Lambda)^{2\nu}({\mathbb L}'_\ep)^*\psi_l$ for $\psi_l\in \mathcal C^{2,\alpha}_{\mu'}$ with $\mu'$ very close to $\mu$. Now
rescale
$$\hat \varphi_l:=\frac{\varphi_l}{m_l},\quad \hat h_l:=\frac{h_l}{m_l},\quad \hat \psi_l:=\frac{\psi_l}{m_l}.$$
Choose a point $y_l\in M\setminus\Lambda$ where
$$\tfrac{1}{2}\leq d_\Lambda(y_l)^{-\nu}\hat \varphi_l(y_l)\leq 1.$$
By compactness, we can show that, up to a subsequence, $y_l\to y_0$ for some $y_0\in M$. There are two cases depending on the location of $y_0$:\\

\textbf{Case 1:} Suppose that $y_0\in M\setminus\Lambda$.  Arguing as in Case 1 in the proof of Proposition \ref{prop:uniform-injectivity} we reach a contradiction using non-degeneracy.\\

\textbf{Case 2:} Now assume that $y_0\in \Lambda$, and let $r_l:=\dist(y_l,\Lambda)$ to be the radial Fermi coordinate, and rescale
$$\check\varphi_l(y):=r_l^{-\nu} \hat\varphi_l(y_l+r_l y),$$
and similarly with the other functions.
Since  $\|\check\varphi_l\|_{\mathcal C^{2,\alpha}_\nu}\leq C$, we can find a convergent subsequence to a function $\check \varphi_0$ satisfying
\begin{equation}\label{surjectivity1}
L' \check \varphi_0=0\quad\text{in }\mathbb R^n\setminus\mathbb R^p,
\end{equation}
for some operator $L'$ that we will study below. One may also check that $\check \psi_l$ converges to a function $\check \psi_0$, thanks to the bounds in Lemma \ref{adjoint-bound}. Moreover, this limit satisfies
\begin{equation}\label{surjectivity2}
\check \varphi_0=(L')^* (\check\psi_0),
\end{equation}
Take Fourier transform of equations \eqref{surjectivity1} and \eqref{surjectivity2} in the variable $z$, denoting the Fourier variable by $\zeta$, and the transformed operator by  $K_\zeta$. Then the both equations reduce to
\begin{equation*}
\begin{split}
&K_\zeta \omega_0=0,\\
&\omega_0=K_\zeta^*\varpi_0 \quad\text{in }\mathbb R^n\setminus\mathbb R^p.
\end{split}
\end{equation*}
Hence $0=K_\zeta (\omega_0)=K_\zeta K_\zeta^*  (\varpi_0)$. Multiply this equation by $\varpi_0$ and integrate by parts to obtain, for each fixed $\zeta$,
$$\int |K_\zeta^* \varpi_0|^2r^2\,dr=0.$$
This implies that $\varpi_0\equiv 0$, which yields a contradiction.


 The linear operator $L'$ (and its Fourier version $K$) will depend on the behavior of the quotient $\ep_l/r_l$, as we have three different regimes in Corollary \ref{cor-model-solution}. First, if $\ep_l/r_l\to \infty$, we have that $L'= \mathcal L^{(0)}$ because of \eqref{behavior-zero}.
Second, assume that $\ep_l/r_l\to 1$; then $L'=\mathcal L_1$ defined in $\mathbb R^n\back \mathbb R^p$.   Finally, if $\ep_l/r_l\to 0$, it holds that $L'=\mathcal L^{(\infty)}$. But in all these cases the above argument works, as it is very similar to the proof in Proposition \ref{prop:uniform-injectivity}.
\end{proof}

 \begin{remark}
In addition to \eqref{choose-nu}, we will impose further restrictions on $\nu$. Indeed, we need to ask that $\nu$ also satisfies
\begin{equation}\label{choose-nu2}
-\tfrac{n-4}{4}<\nu<\min\{-\tfrac{n-4}{4}+1,-\re(\chi^{(0)}_{0,-})\}.
\end{equation}
 Recalling the values of $\mu$ and $\delta$ in \eqref{choose-mu} and \eqref{definition-delta}, respectively, it is clear that these are compatible choices. A summary can be found in Figure 1.
 \end{remark}

\section{Nonlinear analysis}

Now we go back to our original equation, rewritten in cylindrical coordinates as \eqref{eq-t} in the notation of Subsection \ref{subsection:estimates}.
Thus consider equation \eqref{eq-tt}, which we recall here:
 \begin{equation}\label{eq-ttt}
\bar{L}_\ep[\varphi] +\bar f_\ep + \bar Q_\ep[\varphi]=0,
\end{equation}
We set
\begin{equation*}
\bar \nu=\nu+\tfrac{n-4}{4},
\end{equation*}
for some  $\bar \nu>0$ small enough. This choice is still compatible with \eqref{choose-nu2}. 

From the discussion in the previous section we have that the operator
 $$\bar L'_\ep:\mathcal C^{2,\alpha}_{\bar \nu}(M\setminus\Lambda)\to\mathcal C^{0,\alpha}_{\bar\nu}(M\setminus\Lambda),$$
has a right inverse
 $$\bar G'_\ep:\mathcal C^{0,\alpha}_{\bar \nu}(M\setminus\Lambda)\to \mathcal C^{2,\alpha}_{\bar\nu}(M\setminus\Lambda),$$
with norm independent of $\epsilon$ (see Proposition \ref{prop:unif-surjectivity}). Thus, $\bar L_\ep$ is also invertible, with right inverse
\begin{equation}\label{normalized-G}
\bar G_\ep:= \bar G'_\ep( \bar v_\ep^{-3} \,\cdot).
\end{equation}
Now we can define the operator
\begin{equation}\label{operatorT}
\bar T_\ep(w):=-\bar G_\ep(\bar Q_\ep[w]+\bar f_\ep).
\end{equation}
A fixed point of $\bar T_\ep$ will yield a solution to equation \eqref{eq-t}.\\

In our first claim we will use the estimates   for the error term $\bar f_\ep$ from Proposition \ref{prop:compare-metric}. Without loss of generality, take $\varrho<1$.
By \eqref{normalized-G} and the fact that $\bar G'_\ep$ has  norm uniformly bounded, 
\begin{equation*}
G_\ep\bar f_\ep=\begin{cases}
O(r),&\quad  r\in (0,\tfrac{1}{m}\ep),\\
O(\ep),&\quad r\in (\tfrac{1}{m}\ep,m\ep),\\
O(\ep^{\alpha_0} r^{-\alpha_0+1}),&\quad  r\in(m\ep,\varrho),\\
O(\ep^{\alpha_0}),&\quad r>\varrho,
\end{cases}
\end{equation*}
where the radial coordinate $r$ has been extended smoothly outside $\mathcal T_{\varrho}$. 
Set
\begin{equation}\label{fixed-point-region}
\begin{split}
\mathcal B(\ep,\varsigma):=\{w\in \mathcal C^{2,\alpha}_{\bar\nu}(M\setminus\Lambda)\,:\, &\|w\|_{\mathcal C^{2,\alpha}_{\bar\nu}(\mathcal T_{\varrho}) } \leq \varsigma \ep^{1-\bar\nu},\\
&\|w\|_{\mathcal C^{2,\alpha}(M\setminus\mathcal T_\varrho)}\leq \varsigma\ep^{\alpha_0}\}.
\end{split}
\end{equation}
 Choose $\varsigma$ large enough so that $\bar G_\ep \bar f_\ep\in \mathcal B(\ep,\varsigma/2)$. Next, 
 an estimate for the quadratic term follows by a standard argument. Indeed:

\begin{lem}
For $\epsilon_0$ small enough, we have
\begin{equation*}
\|\bar G_\ep \bar Q(w_2)-\bar G_\ep \bar Q(w_1)\|_{\mathcal C^{0,\alpha}_{\bar \nu}}\leq  \frac{1}{2}\|w_2-w_1\|_{\mathcal C^{2,\alpha}_{\bar \nu}}
\end{equation*}
for all $w_1,w_2\in\mathcal B(\ep_0,\varsigma)$, $\epsilon\in(0,\epsilon_0)$.
\end{lem}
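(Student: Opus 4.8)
The plan is to run the standard quadratic--estimate argument, exploiting that in the $v$--notation the operator $\mathcal N(\cdot,g_{\mathbb R^n,cyl})$ is an \emph{algebraic} expression in $v$ and its first and second derivatives: by \eqref{formula-B} the entries of $B_{g_v}$ are affine in $D^2v$ and at most quadratic in $(v,Dv)$, so $\sigma_2(B_{g_v})$ is a polynomial of degree at most four in $(v,\partial_t v,\partial_{tt}v,\partial_z v,\partial_{zz}v)$, while the outer factors $v^{-2k+1}=v^{-3}$ and $v^{q-2k+1}$ are smooth functions of $v$ on the range in which $v=\bar v_\ep+w$ takes values. First I would record, exactly as in the preceding sections, that on $\mathcal B(\ep_0,\sigma)$ the function $\bar v_\ep+w$ is positive and in fact stays in a fixed compact subset of $(0,\infty)$ --- near $\Lambda$ because $\bar v_\ep\asymp v_\infty$ while $|w|\le\sigma\ep_0^{\beta_1}r^{\bar\nu}\to 0$ (here $\bar\nu>0$ by \eqref{choose-nu2}), and away from $\Lambda$ after the usual rescaling --- so that $v\mapsto v^{-3}$, $v\mapsto v^{q-3}$ and all their derivatives are uniformly bounded there. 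Consequently $\bar Q_\ep$ is a smooth map between the relevant Banach spaces on $\mathcal B(\ep_0,\sigma)$, and $\bar Q_\ep[w]$ collects precisely the terms of the Taylor expansion of $\mathcal N(\bar v_\ep+\cdot\,,g_{\mathbb R^n,cyl})$ about $0$ that are at least quadratic in $w$: a finite sum of monomials, of degrees $d\ge 2$, in $w$ and its derivatives up to order two (the $z$--second derivatives carrying the favorable factor $e^{-2t}$), with coefficients built from $\bar v_\ep,\partial_t\bar v_\ep,\partial_{tt}\bar v_\ep$ and bounded functions of $\bar v_\ep$.

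Next I would differentiate. Since $\bar L_\ep[\cdot]=D\mathcal N(\bar v_\ep,\cdot)[\cdot]$, we have $D_w\bar Q_\ep[w]=D\mathcal N(\bar v_\ep+w,\cdot)-D\mathcal N(\bar v_\ep,\cdot)$, which vanishes at $w=0$, and the mean value theorem along the segment joining $w_1$ and $w_2$ gives
\begin{equation*}
\bar Q_\ep[w_2]-\bar Q_\ep[w_1]=\int_0^1 D_w\bar Q_\ep\big[w_1+s(w_2-w_1)\big]\,[w_2-w_1]\,ds .
\end{equation*}
By the algebraic structure, each $D_w\bar Q_\ep[w]\,[w_2-w_1]$ is a finite sum of terms which are products of a uniformly bounded coefficient, at least one factor drawn from $w$ and its derivatives up to order two, and a ``test'' factor of the same type applied to $w_2-w_1$; in particular each such product has total degree $d\ge 2$ in $\{w_1,w_2,w_2-w_1\}$ and derivatives. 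Using the product rule for weighted H\"older norms (if $f\in\mathcal C^{0,\alpha}_a$, $g\in\mathcal C^{0,\alpha}_b$ then $fg\in\mathcal C^{0,\alpha}_{a+b}$) together with the embedding $\mathcal C^{0,\alpha}_{d\bar\nu}\hookrightarrow\mathcal C^{0,\alpha}_{\bar\nu}$, valid for $d\ge 2$ because $\bar\nu>0$ and the weight function $d$ is bounded above, one obtains
\begin{equation*}
\big\|\bar Q_\ep[w_2]-\bar Q_\ep[w_1]\big\|_{\mathcal C^{0,\alpha}_{\bar\nu}}\le C\big(\|w_1\|_{\mathcal C^{2,\alpha}_{\bar\nu}}+\|w_2\|_{\mathcal C^{2,\alpha}_{\bar\nu}}\big)\,\|w_2-w_1\|_{\mathcal C^{2,\alpha}_{\bar\nu}},
\end{equation*}
with $C$ independent of $\ep$. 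Since $w_1,w_2\in\mathcal B(\ep_0,\sigma)$ means $\|w_i\|_{\mathcal C^{2,\alpha}_{\bar\nu}}\le\sigma\ep_0^{\beta_1}$, this yields
\begin{equation*}
\big\|\bar Q_\ep[w_2]-\bar Q_\ep[w_1]\big\|_{\mathcal C^{0,\alpha}_{\bar\nu}}\le 2C\sigma\ep_0^{\beta_1}\,\|w_2-w_1\|_{\mathcal C^{2,\alpha}_{\bar\nu}}<\tfrac{1}{2m}\,\|w_2-w_1\|_{\mathcal C^{2,\alpha}_{\bar\nu}}
\end{equation*}
as soon as $\ep_0$ is chosen so small that $2C\sigma\ep_0^{\beta_1}<\tfrac{1}{2m}$, which is possible because $C$, $\sigma$ and $m$ are all independent of $\ep$.

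The step I expect to be the main obstacle is precisely the $\ep$--independence of $C$, i.e.\ the uniform control of the coefficients of $\bar Q_\ep$ --- equivalently, of $\bar v_\ep,\partial_t\bar v_\ep,\partial_{tt}\bar v_\ep$ and of $\bar v_\ep^{-1}$ --- across the neck region $r\sim\ep$. As in the proofs of Propositions \ref{prop:uniform-injectivity} and \ref{prop:unif-surjectivity}, this is handled by splitting $\Omega\setminus\Lambda$ into the regime $r\ll\ep$ (where $\bar v_\ep$ is a perturbation of the constant $v_\infty$ by Proposition \ref{ODE-study}), the neck $r\sim\ep$ (where one rescales $r\mapsto r/\ep$ to land on the fixed profile $v_1$ and the model operator $\mathcal L_1$), and the outer region $r\gg\ep$ (where $\bar L_\ep$ is, up to lower order terms, $\ep$--independent and uniformly elliptic). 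In each regime the asymptotics furnished by Propositions \ref{ODE-study} and \ref{prop:compare-metric} give the required uniform bounds, and the weighted norms have been designed precisely so that these bounds are scale invariant; assembling the three contributions (adding, as in Lemma \ref{lemma:localization}, a harmless compactly supported commutator term) finishes the proof.
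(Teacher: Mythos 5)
The paper's own ``proof'' is a one-line citation to Lemma~5.2 of Fakhi or Lemma~9 of Mazzeo--Pacard, and your proposal reconstructs the standard quadratic-remainder argument those references implement, so the overall plan is the right one. Two places where I would push back, though. First, a small bookkeeping slip: you claim that $\bar v_\ep+w$ ``stays in a fixed compact subset of $(0,\infty)$'' on $\mathcal B(\ep_0,\sigma)$. This is correct near $\Lambda$ (where $\bar v_\ep\asymp v_\infty$ and $w$ is small), but it is \emph{false} in the outer region: the construction in \eqref{choice-backgroud} deliberately rescales the background conformal factor to $\ep^{\alpha_0}u_*$, so there $\bar v_\ep\sim\ep^{\alpha_0}\to 0$ as $\ep\to 0$, and $v\mapsto v^{-3}$ and its derivatives are \emph{not} uniformly bounded on the range of $\bar v_\ep+w$. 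The phrase ``after the usual rescaling'' does not resolve this --- the rescaling that matters here is not a blow-up in $r$ but a rescaling in the amplitude of $v$, and it is exactly the conformal equivariance \eqref{confequi} (equivalently, the one-homogeneity of $v\mapsto v^{-3}\sigma_2(B_{g_v})$ under $v\mapsto\lambda v$) that has to be invoked.

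Second, and this is the genuine gap: once one does carry out that amplitude rescaling, the outer-region Lipschitz constant of $\bar Q_\ep$ picks up a factor $\ep^{-\alpha_0}$. Writing $\bar v_\ep=\ep^{\alpha_0}v_*$ and $\tilde w=\ep^{-\alpha_0}w$, the one-homogeneity gives $\bar Q_\ep[w]=\ep^{\alpha_0}Q_1[\tilde w]+O(\ep^{\alpha_0(q-3)})$ with $Q_1$ the quadratic remainder of the fixed profile $v_*$, and then $\|\bar Q_\ep[w_2]-\bar Q_\ep[w_1]\|\lesssim\ep^{-\alpha_0}\max_i\|w_i\|\,\|w_2-w_1\|$. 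Your displayed estimate asserts a constant $C$ \emph{independent of $\ep$} multiplying $(\|w_1\|+\|w_2\|)$, which is precisely what the preceding computation shows one does \emph{not} get for free; it has to be traded off against the smallness of the ball, and whether $\sigma\ep_0^{\beta_1}\ep^{-\alpha_0}$ can be made $\le\tfrac1{2m}$ uniformly for $\ep\in(0,\ep_0)$ hinges on the precise relationship between $\beta_1$ (from Proposition~\ref{prop:compare-metric}) and $\alpha_0$, which your sketch does not address. So while the algebraic structure, the mean-value formula, and the weighted-H\"older product rule are all used correctly, the asserted $\ep$-independence of $C$ is exactly the delicate point, and the argument as written does not close it.
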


\begin{proof}
This is just Lemma 5.2 in \cite{Fakhi} or Lemma 9 in \cite{Mazzeo-Pacard}, taking into account the  `prime' normalization.
\end{proof}

In conclusion, the fact that the operator  $\bar T_\ep$ from \eqref{operatorT} is a contraction on $\mathcal B(\ep,\varsigma)$ for $\ep$ small enough yields a solution $w$ to \eqref{eq-ttt}.

Going back to the $u$-notation for a moment, we have just produced a solution  $\bar u:=\bar u_\ep+\varphi$ such that $\bar u>0$ near $\Lambda$ and the metric $\bar g=u^{\frac{8}{n-4}}g_M$ is complete. Indeed, one the one hand, $\bar u_\ep$ behaves like $v_\infty r^{-\frac{n-4}{4}}$ as $r\to 0$ (see \eqref{behavior-zero}). On the other hand, thanks to the choice  $\nu>-\frac{n-4}{4}$ from \eqref{choose-nu2}, $\varphi=O(r^\nu)$ has a less singular behavior which is not seen near $\Lambda$. 

Unfortunately, our argument does not yield  positivity in the whole $M$ since the approximate solution is not sharp enough in the neck region $r\in(\varrho_0,\varrho_1)$. We conjecture that a more appropriately chosen approximate solution is possible, as in the paper \cite{cat-maz}, however, this question is out of the scope of this paper.

Family, observe that this proof also produces a family of solutions as $\ep\to 0$.

\qed

\medskip


\textbf{Financial support.}
 M.d.M. Gonz\'alez  acknowledges financial support from the Spanish  Government, grant numbers PID2020-113596GB-I00,  PID2023-150166NB-I00; additionally, Grant RED2022-134784-T, RED2024-153842-T funded by MCIN/AEI/ 10.13039/501100011033, and the ``Severo Ochoa Programme for Centers of Excellence in R\&D'' (CEX2019-000904-S). M.F. Espinal is supported by the Agencia Nacional de Investigación y Desarrollo Gobierno de Chile, fellowship number 21190289, and  by Centro de Modelamiento Matemático (CMM), BASAL fund FB210005 for center of excellence from ANID-Chile.\\

\textbf{Data availability statement.} No data were generated or analyzed in the presented research.

\textbf{Conflict-of-Interest Statement.}
 The authors declare no conflict of interest.

\end{document}